\setlist[enumerate]{leftmargin=1.5em}
\setlist[itemize]{leftmargin=1.5em}
\definecolor{green}{rgb}{0,0.5,0} 
\newcommand{\dd}{\textrm{d}}
\newtheorem{thm}{Theorem}[section]
\newtheorem{lem}[thm]{Lemma}
\newtheorem{rmk}[thm]{Remark}
\theoremstyle{definition}
\numberwithin{equation}{section}
\newcommand{\nrm}[1]{\Vert#1\Vert}
\newcommand{\brk}[1]{\langle#1\rangle}
\newcommand{\set}[1]{\{#1\}}
\newcommand{\tld}[1]{\widetilde{#1}}
\newcommand{\nnrm}[1]{{\vert\kern-0.25ex\vert\kern-0.25ex\vert #1 
		\vert\kern-0.25ex\vert\kern-0.25ex\vert}}
\newcommand{\supp}{{\mathrm{supp}}\,}
\newcommand{\lap}{\Delta}
\newcommand{\rd}{\partial}
\newcommand{\nb}{\nabla}
\newcommand{\0}{\emptyset}
\newcommand{\ift}{\infty}
\newcommand{\bt}{\beta}
\newcommand{\gmm}{\gamma}
\newcommand{\dlt}{\delta}
\newcommand{\eps}{\varepsilon}
\newcommand{\kpp}{\kappa}
\newcommand{\lmb}{\lambda}
\newcommand{\tht}{\theta}
\newcommand{\omg}{\omega}
\newcommand{\bfe}{{\bf e}}
\newcommand{\bfu}{{\bf u}}
\newcommand{\bfx}{{\bf x}}
\newcommand{\bfy}{{\bf y}}
\newcommand{\bbR}{\mathbb R}
\newcommand{\calB}{\mathcal B}
\newcommand{\calE}{\mathcal E}
\newcommand{\calF}{\mathcal F}
\newcommand{\calG}{\mathcal G}
\newcommand{\calI}{\mathcal I}
\newcommand{\calK}{\mathcal K}
\newcommand{\calM}{\mathcal M}
\newcommand{\calO}{\mathcal O}
\newcommand{\calP}{\mathcal P}
\newcommand{\calS}{\mathcal S}
\newcommand{\weakto}{\rightharpoonup}
\newcommand{\bfomg}{\boldsymbol{\omg}}		
\newcommand{\f}[2]{\frac{#1}{#2}}       
\newcommand{\ii}[2]{\int_{#1}^{#2}}     
\newcommand{\q}{\mbox{ }}       
\newcommand{\qd}{\quad }
\newcommand{\bfone}{\mathbf{1}}
\newcommand{\+}[1]{\brk{#1}_+}
\begin{document}

\title{Stability of oppositely-propagating pair of Hill's spherical vortices}
\author{Young-Jin Sim}
\address{Department of Mathematical Sciences, Ulsan National Institute of Science and Technology, 50 UNIST-gil, Eonyang-eup, Ulju-gun, Ulsan 44919, Republic of Korea. }
	\email{yj.sim@unist.ac.kr}
\date\today

\begin{abstract}
We establish the stability of a pair of Hill's spherical vortices moving away from each other in 3D incompressible axisymmetric Euler equations without swirl. Each vortex in the pair propagates away from its odd-symmetric counterpart, while keeping its vortex profile close to Hill's vortex. This is achieved by analyzing the evolution of the interaction energy of the pair and combining it with the compactness of energy-maximizing sequences in the variational problem concerning Hill's vortex. The key strategy is to confirm that, if the interaction energy is initially small enough, the kinetic energy of each vortex in the pair remains so close to that of a single Hill's vortex for all time that each vortex profile stays close to the energy maximizer: Hill's vortex. An estimate of the propagating speed of each vortex in the pair is also obtained by tracking the center of mass of each vortex. The estimate can be understood as optimal in the sense that the power exponent of the $\varepsilon$—the small perturbation measured in the ($L^1\cap L^2$+impulse) norm—appearing in the error bound cannot be improved.
\end{abstract}
\maketitle
\renewcommand{\thefootnote}{\fnsymbol{footnote}}
\footnotetext{\textit{2020 AMS Mathematics Subject Classification:} 35Q31}
\footnotetext{\textit{Key words: stability, Hill's vortex, anti-parallel flow, interaction energy, kinetic energy, variational problem}}
\renewcommand{\thefootnote}{\arabic{footnote}}
\section{Introduction}\q

In this paper, we are concerned with the homogeneous incompressible Euler equations in $\bbR^3$ (in vorticity form): 
\begin{equation}\label{eq: Euler eq.}
\left\{
\begin{aligned}
&\,\rd_{t}\bfomg+(\bfu\cdot\nb)\bfomg= (\bfomg\cdot\nb)\bfu\qd\mbox{in}\q [0,\ift)\times\bbR^3,\\
&\,\bfu= \nb\times
(-\lap_{\bbR^3})^{-1}\omg,\\
&\,\bfomg|_{t=0}=\bfomg_{0},
\end{aligned}
\right.
\end{equation} with a given initial data $\bfomg_0:\bbR^3\to\bbR^3$ where the \textit{velocity} $\bfu:[0,\ift)\times\bbR^3\to\bbR^3$ satisfies the incompressibility 
$\nb\cdot\bfu=0,$ and the \textit{vorticity} $\bfomg:[0,\ift)\times\bbR^3\to\bbR^3$ is given as 
$\bfomg=\nb\times\bfu.$ The expression $\bfu= \nb\times
(-\lap_{\bbR^3})^{-1}\omg$ denotes the 3D Biot-Savart law. Assuming that the velocity is axisymmetric (about $z-$axis) without swirl, the vorticity $\bfomg$ has its axisymmetric angular component $\omg^\tht$ only, i.e.,
$$\bfomg=\bfomg(r,\tht,z)=\omg^\tht(r,z)\cdot\left(-\sin\tht,\cos\tht,0\right)$$  in cylindrical coordinates $\bfx=(r,\tht,z)$. Then we can reduce \eqref{eq: Euler eq.} to the following active scalar equation
\begin{equation}\label{eq: axi no swirl Euler}
\left\{
\begin{aligned}
&\,\partial_t\xi+\bfu\cdot\nb\xi=0\qd\mbox{in}\q [0,\ift)\times\bbR^3,\\
&\,\xi|_{t=0}= \xi_0
\end{aligned}
    \right.
\end{equation}
for the \textit{relative vorticity} $\xi:[0,\ift)\times\bbR^3\to\bbR$ defined by $\xi:=\omg^\tht(r,z)/r.$ The velocity $\bfu$ is recovered from $\xi$ by the axisymmetric Biot-Savart law $\bfu=\calK[\xi]$ (see Subsection~\ref{subsec: axi-sym.no.swirl}). 

In this setting, the Hill's spherical vortex $\xi_H$ defined by 
$$\xi_H:=\bfone_{\set{\bfx\in\bbR^3:|\bfx|<1}} $$ is a traveling wave solution to \eqref{eq: axi no swirl Euler} moving along the vector $\bfe_z:=(0,0,1)$ in that, for the traveling speed $W_H=2/15$, the function $\xi=\xi(t,\bfx)$ defined by 
$$\xi(t,\bfx):=\xi_H\left(\bfx-tW_H\bfe_z\right)\qd\mbox{for}\q t\geq0,\, \bfx\in\bbR^3$$ solves \eqref{eq: axi no swirl Euler}. We refer to \cite{Hill1894} for the discovery of the Hill's vortex, written by M. J. M. Hill in 1984; see \cite{Lamb1993, Norbury1972, Norbury1973, Fraenkel1972, AF1986, Pozrikidis1986, PE2016, Wan1988, Choi2024, CJ2023, Pozrikidis1986} for various subsequent works, which will be elaborated later in Subsection~\ref{subsec: related topic}.

\subsection{Main statement}\q

Our main result consists of Theorem~\ref{thm: main result with no shift estimate} and Theorem~\ref{thm: shift estimate}. Theorem~\ref{thm: main result with no shift estimate} below is the stability of a distant, odd-symmetric pair of Hill's spherical vortices $\xi_H$. In the statement, we use the notations
$$L^1_w(\bbR^3):=\left\{
f=(x_1^2+x_2^2)^{-1}g:\, g\in L^1(\bbR^3)
\right\}\qd\mbox{with}\qd
\nrm{f}_{L^1_w(\bbR^3)}:=
\int_{\bbR^3}(x_1^2+x_2^2)|f(\bfx)|\,\dd\bfx.
$$
\begin{thm}(Stability of odd-symmetric pair of Hill's vortices)\label{thm: main result with no shift estimate}
For each $\eps>0$, there exist $\dlt=\dlt(\eps)>0$ and $d_0=d_0(\eps)>1$ satisfying the following: \\
    
For any $d\geq d_0$ and for any axisymmetric initial data $\xi_0\in \left(L^1\cap L^\ift\cap L^1_w\right)(\bbR^3)$ with $r\xi_0\in L^\ift(\bbR^3)$ such that
$$\xi_0(r,z)=-\xi_0(r,-z)\geq0\qd\mbox{for}\q r>0,\,z\geq0$$ and
$$\left\|\xi_0-\left[\xi_H(\cdot-d\bfe_z)-\xi_H(\cdot+d\bfe_z)\right]\right\|_{\left(L^1\cap L^2\cap L^1_w\right)(\bbR^3)}<\dlt,$$  there exists a shift function $\tau:[0,\ift)\to[1,\ift)$ such that $\tau(0)=d$ and the axisymmetric weak solutions $\xi(t)$ of \eqref{eq: axi no swirl Euler} satisfies 
    $$\left\|\xi(t)-\left[\xi_H\left(\cdot-\tau(t)\bfe_z\right)-\xi_H\left(\cdot+\tau(t)\bfe_z\right)\right]\right\|_{\left(L^1\cap L^2\cap L^1_w\right)(\bbR^3)}<\eps\qd\mbox{for each}\q t\geq0.$$ Here, the notation $\nrm{\cdot}_{L^1\cap L^2\cap L^1_w}$ means $\nrm{\cdot}_{L^1}+\nrm{\cdot}_{L^2}+\nrm{\cdot}_{L^1_w}$.
\end{thm}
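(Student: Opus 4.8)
The plan is to reduce the stability of the pair to the rigidity of a \emph{single} Hill's vortex, using the odd symmetry and a continuity (bootstrap) argument that keeps the two vortices far apart for all time so that the kinetic energy of each stays pinned at the maximal value $E_\star:=E[\xi_H]$. First I would record that the odd symmetry $\xi_0(r,z)=-\xi_0(r,-z)$ with $\xi_0\ge0$ on $\{z>0\}$ is propagated by \eqref{eq: axi no swirl Euler}: the map $\zeta\mapsto-\zeta(r,-z)$ is a symmetry of the equation, and odd symmetry forces $u_z|_{\{z=0\}}=0$, so $\{z=0\}$ is a material plane. Hence for all $t$ we may write $\xi(t)=\xi^{+}(t)-\xi^{-}(t)$ with $\xi^{+}(t):=\xi(t)\mathbf 1_{\{z>0\}}\ge0$ and $\xi^{-}(t,r,z)=\xi^{+}(t,r,-z)$, each supported in a half-space and each equal to $\xi_0^{\pm}\circ\Phi_t^{-1}$ for the flow $\Phi_t$ of $\calK[\xi]$. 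Consequently the mass and the full distribution function of $\xi^{+}$ are conserved (in particular the $L^1$ and $L^2$ sizes of $(\xi^{+}-1)_{+}$, which are $\aleq\dlt$ since $\xi_0^{+}$ is $L^1\cap L^2$--close to $\xi_H(\cdot-d\bfe_z)\le1$), while $E$ and the total impulse are conserved. Writing $E$ as a positive quadratic form and $\calI(t)\ge0$ for the cross term between $\xi^{+}$ and $\xi^{-}$, odd symmetry gives
$$E[\xi(t)]=2E[\xi^{+}(t)]-\calI(t),\qquad\text{so}\qquad E[\xi^{+}(t)]=\tfrac12 E[\xi_0]+\tfrac12\calI(t).$$
Since $E[\xi_0^{+}]=E_\star+O(\dlt)$ (bilinearity of $E$ and its continuity on bounded sets in the perturbation norm) and $\calI(0)\aleq\dlt+1/d$ (decay of the axisymmetric Biot--Savart kernel at separation $\sim 2d$), the constant $\tfrac12 E[\xi_0]$ lies within $O(\dlt+1/d)$ of $E_\star$; with $\calI(t)\ge0$ this already gives the unconditional lower bound $E[\xi^{+}(t)]\ge E_\star-C(\dlt+1/d)$ for all $t$.

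\emph{The variational input.} Next I would invoke the variational characterization and compactness of Hill's vortex from the earlier part of the paper: $E_\star$ is the maximum of $E$ over nonnegative axisymmetric fields with impulse $P_H:=\int_{\{|\bfx|<1\}}(x_1^2+x_2^2)\,\dd\bfx$ and pointwise bound $1$, attained exactly at the $z$-translates of $\xi_H$; and for every $\eta>0$ there is $\dlt_1>0$ such that any $\zeta\ge0$ whose mass, impulse, $L^{\infty}$-excess and energy-deficit are all $<\dlt_1$ relative to $\xi_H$ obeys $\inf_{c}\|\zeta-\xi_H(\cdot-c\bfe_z)\|_{L^1\cap L^2\cap L^1_w}<\eta$, with the optimal $c$ comparable to the centroid of the core $\{\zeta>\tfrac12\}$. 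In addition the variational \emph{upper} bound gives $E[\xi^{+}(t)]\le E_{\max}(\mu^{+}(t))+C\dlt$, where $\mu^{+}(t):=\int_{\{z>0\}}(x_1^2+x_2^2)\,\xi(t)\,\dd\bfx$ and $E_{\max}$ is the continuous increasing maximal-energy profile with $E_{\max}(P_H)=E_\star$. The upshot is the trap: \emph{if} $\mu^{+}(t)$ stays near $P_H$, then $E[\xi^{+}(t)]$ is squeezed within $O(\dlt+1/d)$ of $E_\star$, and the rigidity forces $\xi^{+}(t)$ to stay $\eta$-close to a translate of $\xi_H$.

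\emph{The bootstrap.} Fix small $\eta<\eps$, let $\dlt_1=\dlt_1(\eta)$ be as above, and let $[0,T^{*})$ be the maximal interval on which (H1) $|\mu^{+}(t)-P_H|<\dlt_1$ and (H2) the centroid $Z^{+}(t)$ of the transported set $\{\xi^{+}(t)>\tfrac12\}$ obeys $Z^{+}(t)\ge d/4$ and is nondecreasing. On $[0,T^{*})$, the energy bounds, (H1), and the conserved mass and $L^{\infty}$-excess force (for $\dlt$ small, $d_0$ large) all four deficits of $\xi^{+}(t)$ relative to $\xi_H$ below $\dlt_1$, so $\xi^{+}(t)$ is $\eta$-close to $\xi_H(\cdot-c(t)\bfe_z)$ with $c(t)$ comparable to $Z^{+}(t)$, hence $c(t)\ageq d$. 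Now I would track the two centers. Since $\{\xi^{+}(t)>\tfrac12\}$ is transported, $\dot Z^{+}(t)$ equals its average of $(\calK[\xi])_z$; the self-induced part is $W_H+O(\eta)$ — because testing the traveling-wave relation $(\calK[\xi_H]-W_H\bfe_z)\cdot\nabla\xi_H=0$ against $z$ gives $|B_1|^{-1}\int_{\{|\bfx|<1\}}(\calK[\xi_H])_z\,\dd\bfx=W_H$ and the relevant functionals are continuous — while the $\xi^{-}$-induced part and the effect of any low-density debris are $\aleq c(t)^{-2}+\eta$; hence $\dot Z^{+}\ge W_H/2>0$ for $\eta$ small, $d_0$ large, so $Z^{+}$ (and thus $c(t)$) grows \emph{linearly}: $c(t)\ageq d+W_H t$. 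For the impulse, $|\dot\mu^{+}(t)|\aleq c(t)^{-2}$ — the self-induced part of $\dot\mu^{+}$ vanishing because $\xi^{+}$ alone conserves its own impulse — so $|\mu^{+}(t)-\mu^{+}(0)|\le\int_0^{t}O(c(s)^{-2})\,\dd s\aleq 1/d$, whence $|\mu^{+}(t)-P_H|\aleq\dlt+1/d$, strictly improving (H1); and $\dot Z^{+}>0$ strictly improves (H2). Therefore $T^{*}=\infty$. (Along the way one records $\calI(t)=2E[\xi^{+}(t)]-E[\xi_0]\aleq\dlt+1/d$ for all $t$, which is precisely the statement that the interaction energy stays small.) Finally, reflecting in $z$ gives the same bound for $\xi^{-}(t)$ against $\xi_H(\cdot+c(t)\bfe_z)$ (the three norms are reflection-invariant); the two error functions have disjoint supports since $c(t)>1$; adding them gives the assertion with shift $\tau(t):=c(t)$, normalized so that $\tau(0)=d$, which stays $\ge1$ and tends to $\infty$.

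\emph{Main obstacle.} The heart of the matter is the circularity: the interaction estimates above presume $\xi^{\pm}(t)$ are already Hill-localized, while that localization is deduced from the energy trap, which itself needs the vortices to stay apart and $\mu^{+}$ to stay near $P_H$. The continuity argument breaks this, but it crucially requires $c(t)\to\infty$ \emph{at a linear rate}, so that the $O(c(t)^{-2})$ impulse drift is integrable over $[0,\infty)$ — which is exactly why the self-propagation identity $|B_1|^{-1}\int_{\{|\bfx|<1\}}(\calK[\xi_H])_z\,\dd\bfx=W_H$, and hence the rigidity of Hill's vortex from the variational part, must be threaded into the dynamical argument. The remaining technical work is to bound the influence of the small, possibly spread-out low-density tails of $\xi^{\pm}(t)$ on the induced velocities and on $\calI$, and to show that the various notions of a center stay mutually comparable; here one leans on the simultaneous smallness of those tails in $L^1$, $L^2$ and $L^1_w$ together with Hardy--Littlewood--Sobolev-type bilinear bounds for the axisymmetric Biot--Savart operator.
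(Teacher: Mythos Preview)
Your overall architecture (split by odd symmetry, use the bilinear energy identity, invoke the variational rigidity of $\xi_H$) is sound, and you correctly isolate the unconditional lower bound $E[\xi^{+}(t)]\ge E_\star-C(\dlt+1/d)$. The gap is in how you close the bootstrap hypothesis (H1) on the impulse. Your claim $|\dot\mu^{+}(t)|\aleq c(t)^{-2}$ is not justified: the exact identity (see the computation behind Lemma~\ref{lem: monoticity of impulse}) is
\[
\dot\mu^{+}(t)=\text{const}\cdot\int_{Q\times Q}(\partial_s\mathscr{G})(r,r',z+z')\,\xi^{+}(r,z)\,\xi^{+}(r',z')\,rr'\,\dd r\,\dd z\,\dd r'\,\dd z',
\]
and under your hypotheses the $\eta$-small tail of $\xi^{+}$ is controlled only in $L^1\cap L^2\cap L^1_w$, which says nothing about its location in $z$. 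Tail mass can sit arbitrarily close to $\{z=0\}$, so the tail$\times$tail contribution to $\dot\mu^{+}$ is bounded only by a quantity of order $\eta^{\alpha}$ (some $\alpha>0$), \emph{not} by $c(t)^{-2}$. Integrating over $[0,T^{*})$ then produces a term growing linearly in $T^{*}$, and (H1) cannot be improved to close the bootstrap on an infinite interval.

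The paper sidesteps this entirely by observing that the integrand above has a \emph{sign}: $(\partial_s\mathscr{G})(r,r',s)<0$ for $s>0$, so $\mu^{+}(t)$ is \emph{strictly decreasing} unconditionally (Lemma~\ref{lem: monoticity of impulse}). This gives the energy upper bound $E[\xi^{+}(t)]\le E_{\max}(\mu^{+}(t))+C\dlt\le E_{\max}(\mu^{+}(0))+C\dlt\approx E_\star$ without any bootstrap, and combined with your own lower bound it pins $E[\xi^{+}(t)]$ at $E_\star$ for all $t$. The impulse lower bound then falls out of the same inequalities read backwards: $E_{\max}(\mu^{+}(t))\ge E_\star-C(\dlt+1/d)$ forces $\mu^{+}(t)\ge P_H-C'(\dlt+1/d)$ via the explicit formula $\calI_{\mu,1,1}\propto\mu^{7/5}$ (Remark~\ref{rmk_energy is scaling}). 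With energy and impulse both pinned \emph{unconditionally}, the paper runs a contradiction/compactness argument (Theorem~\ref{thm: compactness of maximizer set}) rather than a bootstrap, and never needs to track centers of mass or separation at this stage---that machinery is deferred to the separate shift estimate (Theorem~\ref{thm: shift estimate}). In short: you have all the variational ingredients, but you are using the energy--impulse inequality only in the direction ``impulse control $\Rightarrow$ energy control''; the paper also uses the reverse direction, and replaces your unavailable two-sided bound on $\dot\mu^{+}$ with the one-sided monotonicity that odd symmetry gives for free.
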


Theorem~\ref{thm: main result with no shift estimate} above will be proved later at the end of Section~\ref{sec: stability}. See Figure~\ref{fig} for the simple illustration of Theorem~\ref{thm: main result with no shift estimate} compared to the case of a single Hill's vortex.

Theorem~\ref{thm: shift estimate} below contains the shift function estimate. That is, with some additional assumptions on $\xi_0$, we establish that any shift function $\tau:[0,\ift)\to[1,\ift)$ in Theorem~\ref{thm: main result with no shift estimate} propagates approximately with the speed $W_H=2/15$, which is the traveling speed of Hill's spherical vortex $\xi_H$, i.e., 
$$\tau(t)-\tau(0)\simeq W_Ht.$$

\begin{figure}
    \centering
    \includegraphics[width=0.95\linewidth]{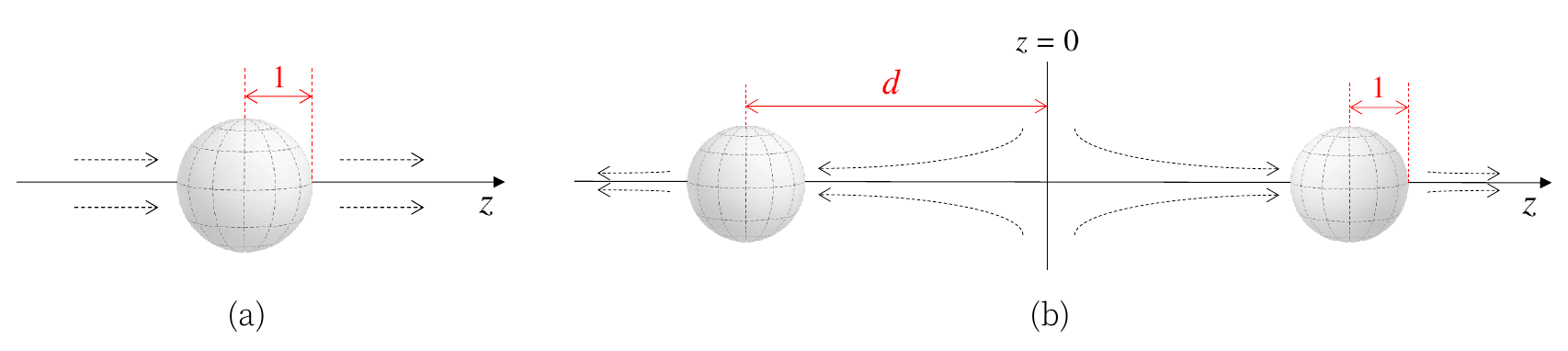}
    \caption{Theorem~\ref{thm: main result with no shift estimate} establishes the stability of a distant, odd-symmetric pair of Hill's vortices as demonstrated in (b). The dashed arrows in the figures demonstrate that, unlike the case of a single Hill's vortex in (a), two Hill's vortices consisting the initial vorticity $\xi_0$ in (b) move away from each other while experiencing an all-time decrease of the impulse $\int r^2|\xi(t)|$ due to odd-odd symmetry of the vorticity; see Lemma~\ref{lem: monoticity of impulse} for details.}
    \label{fig}
\end{figure}

\begin{thm}(Shift function estimate)\label{thm: shift estimate} 
For any $\calM>0$, there exists a constant $\eps_0=\eps_0(\calM)>0$ such that the following holds:\\

For each $\eps\in(0,\eps_0)$, let $d_0=d_0(\eps)>1$ be the corresponding constant given in Theorem~\ref{thm: main result with no shift estimate} (If necessary, we redefine
$d_0(\eps)>\eps^{-1}$). Let $d\geq d_0$ be any constant and 
$\xi_0$ be any axisymmetric initial data satisfying the assumptions in Theorem~\ref{thm: main result with no shift estimate} such that $\nrm{\xi_0}_{L^\ift(\bbR^3)}\leq \calM$ and the support of $\xi_0$ is contained in  $\set{\bfx\in\bbR^3:\, d-2\leq|x_3|\leq d+2}.$ Then, any shift function $\tau:[0,\ift)\to[1,\ift)$ in Theorem~\ref{thm: main result with no shift estimate} satisfies the following estimate for some constant $C=C(\calM)>0$:
\begin{equation}\label{(revisited) eq: tau(t) vs Wt}
    \left|\tau(t)-\tau(0)-W_Ht\right|<  
        C\eps(t+1)
        \qd\mbox{for each }t\geq0
\end{equation} where $\tau(0)=d$ and $W_H=2/15$ is the traveling speed of the (single) Hill's spherical vortex $\xi_H$.
   \end{thm}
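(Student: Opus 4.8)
The plan is to follow the center of mass of the upper vortex in the $x_3$–direction, show it travels at speed $W_H$ up to an $O(\eps)$ error, and then transfer this to $\tau$ using the $L^1$–closeness supplied by Theorem~\ref{thm: main result with no shift estimate}. \textbf{Setup.} Write $\xi_u(t):=\xi(t)\bfone_{\{x_3>0\}}$, $\xi_\ell(t):=\xi(t)\bfone_{\{x_3<0\}}$. Since $\xi_0$ is odd in $x_3$, so is $\xi(t)$, hence $(\calK[\xi(t)])_{x_3}$ is even in $x_3$ and vanishes on $\{x_3=0\}$; thus no fluid crosses this plane, each of $\xi_u,\xi_\ell$ is transported by $\bfu=\calK[\xi(t)]$, and $m:=\int_{\{x_3>0\}}\xi(t)\,\dd\bfx$ is conserved. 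From $\nrm{\xi_0}_{L^\ift}\le\calM$, $r\xi_0\in L^\ift$, $\xi_0\in L^1$ (together with the corresponding bounds for $\xi_H$), the transport equation propagates bounds on $\nrm{\xi(t)}_{L^1\cap L^\ift}$ and $\nrm{r\xi(t)}_{L^\ift}$, whence $\nrm{\bfu(t)}_{L^\ift}\le C_\calM$; since $\supp\xi_0$ lies in a band of width $4$ about $\{|x_3|=d\}$, finite propagation speed gives $\supp\xi_u(t)\sbeq\{|x_3-d|\le 2+C_\calM t\}$. Because $\tau(t)\ge1$, the summand $\xi_H(\cdot+\tau(t)\bfe_z)$ is supported in $\{x_3<0\}$, so Theorem~\ref{thm: main result with no shift estimate} yields, with $\rho(t):=\xi_u(t)-\xi_H(\cdot-\tau(t)\bfe_z)$, that $\nrm{\rho(t)}_{(L^1\cap L^2\cap L^1_w)(\bbR^3)}<\eps$; moreover $|\tau(t)-d|\le C_\calM(1+t)$ (otherwise $\xi_H(\cdot-\tau(t)\bfe_z)$ would be $L^1$–far from $\xi_u(t)$), so $\supp\rho(t)\sbeq\{|x_3-d|\le C_\calM(1+t)\}$.

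\textbf{The centroid and its evolution.} Put $k(t):=\int_{\{x_3>0\}}(x_3-d)\,\xi(t,\bfx)\,\dd\bfx$. Splitting off the Hill part, and using $\int x_3\xi_H\,\dd\bfx=0$ and $\int\xi_H\,\dd\bfx=m_H:=\tfrac43\pi$, one gets $k(t)=(\tau(t)-d)m_H+\int_{\{x_3>0\}}(x_3-d)\rho(t)\,\dd\bfx$, hence by the support bound on $\rho(t)$ and $\nrm{\rho(t)}_{L^1}<\eps$,
\[
\bigl|k(t)-(\tau(t)-d)m_H\bigr|\le C_\calM\eps(1+t).
\]
Since $x_3=0$ on $\{x_3=0\}$ and $m$ is conserved, $k'(t)=\int_{\{x_3>0\}}(\calK[\xi(t)])_{x_3}\xi(t)\,\dd\bfx=B[\xi_u,\xi_u]+B_{\mathrm{int}}(t)$, where $B[f,g]:=\int_{\bbR^3}(\calK[f])_{x_3}\,g\,\dd\bfx$ and $B_{\mathrm{int}}(t):=\int_{\{x_3>0\}}(\calK[\xi_\ell])_{x_3}\,\xi_u\,\dd\bfx$. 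The whole task is to show $B[\xi_u,\xi_u]=W_Hm_H+O_\calM(\eps)$ and $B_{\mathrm{int}}(t)=O_\calM(\eps)$.

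\textbf{The two terms.} Because a single Hill's vortex travels at speed $W_H$, differentiating $\int x_3\,\xi_H(\cdot-W_Ht\bfe_z)\,\dd\bfx=W_Ht\,m_H$ and using translation covariance of $\calK$ gives the \emph{exact} identity $B[\xi_H(\cdot-a\bfe_z),\xi_H(\cdot-a\bfe_z)]=W_Hm_H$ for every $a$. Writing $\xi_u=\xi_H(\cdot-\tau\bfe_z)+\rho$ and expanding,
\[
B[\xi_u,\xi_u]-W_Hm_H=B\bigl[\xi_H(\cdot-\tau\bfe_z),\rho\bigr]+B\bigl[\rho,\xi_H(\cdot-\tau\bfe_z)\bigr]+B[\rho,\rho].
\]
The first term is $\le\nrm{\calK[\xi_H]}_{L^\ift}\nrm{\rho}_{L^1}\aleq\eps$; for the other two I use the endpoint Biot--Savart bound $\nrm{\calK[\rho]}_{L^{3/2,\ift}}\aleq\nrm{r\rho}_{L^1}\le\nrm{\rho}_{L^1_w}^{1/2}\nrm{\rho}_{L^1}^{1/2}\aleq\eps$ together with Hölder in Lorentz spaces ($\nrm{\xi_H}_{L^{3,1}}<\ift$ and $\nrm{\rho}_{L^{3,1}}\aleq\nrm{\rho}_{L^1}^{1/3}\nrm{\rho}_{L^\ift}^{2/3}\aleq_\calM\eps^{1/3}$), getting $|B[\xi_u,\xi_u]-W_Hm_H|\le C_\calM\eps$. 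For the interaction term, provided $\tau(s)\ge\tfrac12 d$ the supports of $\xi_H(\cdot-\tau\bfe_z)$ and of $\xi_\ell$ are a distance $\ageq d$ apart, so splitting $\xi_u=\xi_H(\cdot-\tau\bfe_z)+\rho$ once more and using $|\calK[\xi_\ell]|\aleq\mathrm{dist}^{-2}\nrm{r\xi_\ell}_{L^1}$ on $\supp\xi_H(\cdot-\tau\bfe_z)$, together with $\nrm{\calK[\xi_\ell]}_{L^\ift}\aleq_\calM1$ paired against the small factor $\rho$, gives $|B_{\mathrm{int}}(t)|\le C_\calM(d^{-2}+\eps)\le C_\calM\eps$ because $d\ge d_0>\eps^{-1}$. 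Now $|k(0)|=\bigl|\int_{\{x_3>0\}}(x_3-d)\bigl(\xi_0-[\xi_H(\cdot-d\bfe_z)-\xi_H(\cdot+d\bfe_z)]\bigr)\dd\bfx\bigr|\le 2\nrm{\xi_0-[\xi_H(\cdot-d\bfe_z)-\xi_H(\cdot+d\bfe_z)]}_{L^1}<2\dlt$ (both integrands lie in $\{|x_3-d|\le2\}$; shrink $\dlt(\eps)\le\eps$ if needed), so integrating $k'(s)=W_Hm_H+O_\calM(\eps)$ yields $|k(t)-W_Hm_Ht|\le C_\calM\eps(1+t)$, and combining with the displayed identity for $k(t)$ gives exactly \eqref{(revisited) eq: tau(t) vs Wt}. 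The standing hypothesis $\tau(s)\ge\tfrac12 d$ is removed by continuity: on a maximal interval where it holds, the bound just proved forces $\tau(t)\ge d+W_Ht-C_\calM\eps(1+t)\ge d-C_\calM\eps>\tfrac12 d$ once $\eps<\eps_0(\calM)$ (so that $C_\calM\eps<\min\{W_H,\tfrac12 d\}$, using $d>\eps^{-1}$); hence the interval is all of $[0,\ift)$. (If $\tau$ is not known a priori to be continuous, one argues throughout with the intrinsic quantity $k(t)$ and the fact that any two admissible shift functions differ pointwise by $O(\eps)$.)

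\textbf{Main obstacle.} The delicate point is the self-interaction estimate $B[\xi_u,\xi_u]=W_Hm_H+O(\eps)$: a naive $L^p$–based estimate of the axisymmetric Biot--Savart operator only produces a \emph{fractional} power of $\eps$ here, which would degrade \eqref{(revisited) eq: tau(t) vs Wt}, so one is forced to the endpoint level (weak-$L^{3/2}$, Lorentz duality) to close the bilinear term at the linear rate in $\eps$. The remaining technical work is the propagation of the $L^\ift$ and weighted bounds that pin down $\supp\rho(t)$ in a band of width $\aleq_\calM(1+t)$, and the quantitative far-field decay of $B_{\mathrm{int}}(t)$, which rests on the two vortices remaining a distance $\ageq d$ apart — secured by the bootstrap above.
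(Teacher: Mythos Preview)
Your strategy matches the paper's: track the $x_3$--center of mass of the upper vortex, relate it to $\tau(t)$ up to $O(\eps(1+t))$ via the support control and stability, show its time derivative equals $W_H+O(\eps)$ once the two halves remain separated by $\gtrsim d$, and close by a bootstrap using $d>\eps^{-1}$. The decomposition of $k'(t)$ into a self-term and an interaction term, and the far-field bound on the latter, is exactly what the paper does.

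The one substantive technical difference is how you control the self-interaction error $B[\xi_u,\xi_u]-W_Hm_H$. You go to the endpoint $L^{3/2,\infty}$--$L^{3,1}$ duality for $\calK[\rho]$; the paper instead uses the identity $\|\calK[\,\cdot\,]\|_{L^2}^2=2E[\,\cdot\,]$ together with the bilinear bound
\[
|\calB[\xi_1,\xi_2]|\ \lesssim\ \|\xi_1\|_{L^2}\bigl(\|r^2\xi_2\|_1+\|\xi_2\|_{L^1\cap L^2}\bigr)^{1/2}\|r^2\xi_2\|_1^{1/4}\|\xi_2\|_1^{1/4},
\]
which already yields the \emph{linear} rate in $\eps$ (the exponents $\tfrac12+\tfrac14+\tfrac14$ sum to $1$). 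So your remark that ``a naive $L^p$--based estimate only produces a fractional power'' and that one is ``forced to the endpoint level'' overstates the obstacle: the $L^2$/energy route is both simpler and gives the same rate, without invoking Lorentz duality or the $L^\infty$ bound on $\rho$. Two minor slips worth fixing: (i) $(\calK[\xi])_{x_3}=u^z$ is \emph{odd} in $x_3$ (since $\calG[\xi]$ is odd when $\xi$ is odd), not even --- your conclusion that no flux crosses $\{x_3=0\}$ is right, but for the opposite parity reason; (ii) the paper bootstraps on the continuous quantity $z_c$ rather than on $\tau$ directly, which cleanly sidesteps the continuity issue you flag in your last parenthetical.
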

Theorem~\ref{thm: shift estimate} above will be proved later in Section~\ref{sec: shift}.

\begin{rmk}
    The shift estimate \eqref{(revisited) eq: tau(t) vs Wt} in Theorem~\ref{thm: shift estimate} above is sharp, in the sense of the power exponent of $\eps$ (which is $1$) in the right-hand side. Indeed, when considering the odd-pair of $\lmb\xi_H$ ($\lmb$-strength Hills vortex) as a perturbed data in Theorem~\ref{thm: main result with no shift estimate}, its $L^1\cap L^2\cap L^1_w$ error ($\sim|\lmb-1|$) in Theorem~\ref{thm: main result with no shift estimate} coincides with the shift error ($\sim |\lmb-1|(t+1)$), knowing that the traveling speed of  $\lmb\xi_H$ is given as $\lmb W_H$. We note that the method used to obtaining the estimate \eqref{(revisited) eq: tau(t) vs Wt} can be used to improve the shift function estimate given in \cite[Proposition 3.2]{CJ2023}, where the authors considered a single Hill's vortex, in the optimal way of raising the power exponent of $\eps$ from $1/2$ to $1$. 
\end{rmk}

\begin{rmk}
Thanks to the shift function estimate \eqref{(revisited) eq: tau(t) vs Wt} together with the stability given in Theorem~\ref{thm: main result with no shift estimate}, one can prove the linear-in-time filamentation near the odd-symmetric pair of the Hill's vortices moving away from each other nearly with speed $W_H=2/15$. Indeed, we refer to \cite{CJ2023} for the result of filamentation near a \textit{single} Hill's vortex. It was obtained from the stability of a single Hill's vortex (given in \cite{Choi2024}) and an additional shift function estimate. For the similar filamentation result near the single Chaplygin-Lamb's dipole in $\bbR^2$, we refer to \cite{CJ2022}, and see also \cite{CSW2025} for the filamentation result near a collection of the Sadovskii vortex patches. 
\end{rmk}

\subsection{3D axisymmetric Euler equations without swirl}\label{subsec: axi-sym.no.swirl}\q

For the velocity $\bfu:\bbR^3\to\bbR^3$ in the cylindrical form
$$\bfu=\bfu(r,\tht,z)=u^r(r,\tht,z)\bfe_r(\tht)+u^\tht(r,\tht,z)\bfe_\tht(\tht)+u^z(r,\tht,z)\bfe_z$$ with the cylindrical coordinate $(r,\tht,z)$ where the unit vectors are defined by $$\bfe_r(\tht):=(\cos\tht,\sin\tht,0),\q \bfe_\tht(\tht):=(-\sin\tht,\cos\tht,0),\q
\bfe_z=(0,0,1),$$ we say the velocity is \textit{axisymmetric (about $z$-axis)} when each component $u^r,u^\tht,u^z$ does not depend on $\tht$. The \textit{no-swirl} condition stands for $u^\tht\equiv0$. In this setting, the velocity $\bfu$ has the form 
$$\bfu=\bfu(r,\tht,z)=u^r(r,z)\bfe_r(\tht)+u^\tht(r,z)\bfe_\tht(\tht)+u^z(r,z)\bfe_z,$$ and the vorticity $\bfomg:\bbR^3\to\bbR^3$ has the form 
$$\bfomg=\nb\times\bfu=\omg^\tht(r,z)\bfe_\tht(\tht)\qd\mbox{where}\q \omg^\tht:=\rd_z u^r-\rd_r u^z.$$ By defining the relative vorticity $\xi$ by $$\xi(r,z):=\f{1}{r}\omg^\tht(r,z),$$ 
the velocity is recovered by the following axisymmetric Biot-Savart law: 
$$\bfu=\calK[\xi]:=\nb\times\left(\f{1}{r}\calG[\xi]\bfe_\tht\right)$$
 where the function $\calG[\xi]$ is defined as  $$\calG[\xi]=\calG[\xi](r,z):=\int_\Pi G(r,z,r',z')\xi(r',z')r'dr'dz'$$for the domain $\Pi:=\set{(r,z)\in\bbR^2:\, r>0)}$ with the Green's function given as 
\begin{equation}\label{eq: Green function formula}
G(r,z,r',z')=\f{rr'}{2\pi}\ii{0}{\pi}\f{\cos\vartheta}{\sqrt{r^2+r'^2-2rr'\cos\vartheta+(z-z')^2}}\dd\vartheta\q>0.
\end{equation} In specific, we have 
\begin{equation}\label{eq: axi. sym. biot-savart law}
    \bfu=\calK[\xi]=\f{-\rd_z\calG[\xi]}{r}\bfe_r(\tht)+\f{\rd_r\calG[\xi]}{r}\bfe_z.
\end{equation}
Then the 3D Euler equations \eqref{eq: Euler eq.} are reduced to \eqref{eq: axi no swirl Euler}: $$\rd_t\xi+\bfu\cdot\nb\xi=0\qd\mbox{in}\q [0,\ift)\times\bbR^3\qd\mbox{where }\bfu=\calK[\xi]$$ with the initial data $\xi_0$. It is well-known that, given axisymmetric initial data $\xi_0\in \left(L^1\cap L^\ift\right)(\bbR^3)$ satisfying $\xi_0\in L^1_w(\bbR^3)$ and $r\xi_0\in L^\ift(\bbR^3)$, there exists a unique axisymmetric weak solution $\xi\in L^\ift\left(0,\ift;\left(L^1\cap L^\ift\right)(\bbR^3)\right)$ of \eqref{eq: axi no swirl Euler} (see \cite[Remark 3.5]{Choi2024}). The solution has several quantities that are conserved in time:\\

\noindent\textit{(i)} $L^p$ norms: $$\nrm{\xi(t)}_{L^p(\bbR^3)}\qd\mbox{for each }p\in[1,\ift]$$
    \noindent\textit{(ii)} Kinetic energy:
    $$E[\xi(t)]:=\f{1}{2}\int_{\bbR^3}\xi(t)\calG[\xi(t)]\,\dd\bfx$$
    \noindent\textit{(iii)} Distribution: 
    $$\int_{\set{\bfx\in\bbR^3: a<\xi(t)<b}}\xi(t)\,\dd\bfx\qd\mbox{for each}\q-\ift<a<b<\ift.$$
Moreover, if $\xi_0$ has the odd symmetry with respect to the plane $\set{x_3=0}$:
    $$\xi_0(r,z)=-\xi_0(r,-z)\geq0\qd\mbox{for}\q z\geq0,$$ its solution $\xi(t)$ also has the odd symmetry: for each $t\geq0$, we have
    $$\xi(t,(r,z))=-\xi(t,(r,-z))\geq0\qd\mbox{for}\q r>0,\, z\geq0.$$ 

\subsection{Key ideas}\q

\subsubsection{Interaction between two vortices} \q

For the governing equation \eqref{eq: axi no swirl Euler}, the kinetic energy $E[\xi]$ is conserved in time $t\geq0$, and has a bilinear form in $\xi$:
$$E[\xi]=\calE[\xi,\xi]:=\pi\iint_{\Pi\times\Pi} G(r,z,r',z')\xi(t,(r,z))\xi(t,(r',z'))rr'\, \dd r\dd z\dd r'\dd z',
$$ for the Green's function $G$ with the domain $\Pi:=\set{(r,z)\in\bbR^2:\, r>0)}$. Using this conservation, \cite{Choi2024} obtained the stability of single Hill's spherical vortex $\xi_H$:
\begin{thm}\cite[Theorem 1.2]{Choi2024}
    For $\eps>0$, there exists $\dlt>0$ such that for any non-negative axisymmetric function $\xi_0$ satisfying 
    $$\xi_0,\,r\xi_0\in L^\ift(\bbR^3)$$ and 
    $$\nrm{\xi_0-\xi_H}_{L^1\cap L^2(\bbR^3)}+\nrm{r^2(\xi_0-\xi_H)}_{L^1(\bbR^3)}\leq\dlt,$$ the corresponding solution $\xi(t)$ of \eqref{eq: axi no swirl Euler} satisfies 
    $$\inf_{\tau\in\bbR}\left\{
    \nrm{\xi(\cdot+\tau\bfe_z)-\xi_H}_{L^1\cap L^2(\bbR^3)}+ 
    \nrm{r^2(\xi(\cdot+\tau\bfe_z)-\xi_H)}_{L^1(\bbR^3)}\right\}\leq\eps,\q\forall t\geq0.
    $$
\end{thm}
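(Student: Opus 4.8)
The plan is to prove orbital stability by the variational concentration--compactness method, using that $\xi_H$ is an energy maximizer and that the flow \eqref{eq: axi no swirl Euler} conserves, near $\xi_H$, three families of quantities: the kinetic energy $E[\xi]=\calE[\xi,\xi]$, the impulse $\calI[\xi]:=\int_{\bbR^3}r^2\xi\,\dd\bfx$, and the full distribution function of $\xi$ (hence every $\nrm{\xi(t)}_{L^p}$). Conservation of $\calI$ follows by differentiating in time and integrating by parts with $\nb\cdot\bfu=0$: the resulting self-interaction integral $\iint \rd_z G\,\xi\xi'\,rr'$ is antisymmetric under swapping $(r,z)\leftrightarrow(r',z')$ because $G$ depends on $z-z'$ only through $(z-z')^2$, and hence vanishes. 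Crucially, the vertical translation $\xi\mapsto\xi(\cdot+\tau\bfe_z)$ preserves all three quantities---in particular it preserves the weight $r^2$---which is exactly why stability is measured modulo the $z$-shift $\tau$ and why the weighted space $L^1_w$ is the natural companion of the impulse.

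\textbf{Step 1: variational characterization.} I would first record the variational description of Hill's vortex (in the spirit of Fraenkel--Berger and Amick--Fraenkel): for the relaxed class $\set{\xi \text{ axisymmetric}:\ 0\le\xi\le1,\ \calI[\xi]\le\mu}$ the maximal energy $S(\mu)$ is attained, and at the level $\mu=\calI[\xi_H]$ its \emph{unique} maximizer (up to a $z$-translation) is $\xi_H$ itself, with the Euler--Lagrange relation identifying the multiplier of the impulse constraint with $W_H/2$ and reproducing the free-boundary profile of $\xi_H$. The feature I extract for later use is strict \emph{superadditivity} of $S$: the dilation $\xi\mapsto\xi(\cdot/\lmb)$ preserves $\xi\le1$ and sends $E\mapsto\lmb^7E$, $\calI\mapsto\lmb^5\calI$, so $S(\mu)=S(1)\mu^{7/5}$ with exponent $7/5>1$, whence $S(\mu_1+\mu_2)>S(\mu_1)+S(\mu_2)$; this strict binding is what forbids the vorticity from splitting.

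\textbf{Step 2: compactness of maximizing families.} The core is a compactness lemma: if $\set{\xi_n}$ is axisymmetric, nonnegative, has distribution and impulse converging to those of $\xi_H$, and satisfies $E[\xi_n]\to E[\xi_H]$, then there are shifts $a_n\in\bbR$ with $\xi_n(\cdot+a_n\bfe_z)\to\xi_H$ strongly in $(L^1\cap L^2\cap L^1_w)(\bbR^3)$. I would run Lions' concentration--compactness dichotomy modulo $z$-translations, with the impulse as the conserved charge: \emph{vanishing} is excluded because spreading the vorticity would drive $E[\xi_n]\to0<E[\xi_H]$, and \emph{dichotomy} is excluded by the strict superadditivity of Step 1; hence, after a $z$-shift, the family concentrates and converges weakly to a maximizer, which by Step 1 is $\xi_H$. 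Weak convergence is then upgraded to strong: since $\nrm{\xi_n}_{L^2}^2\to\nrm{\xi_H}_{L^2}^2$ (distribution) and $\brk{\xi_n,\xi_H}\to\nrm{\xi_H}_{L^2}^2$ (weak convergence tested against $\xi_H$), one gets $\nrm{\xi_n-\xi_H}_{L^2}\to0$; the analogous bang-bang rigidity of the indicator $\xi_H$ gives $L^1$ convergence, and convergence of the weighted total $\calI[\xi_n]\to\calI[\xi_H]$ upgrades the weak convergence of $r^2\xi_n$ to strong convergence in $L^1_w$.

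\textbf{Step 3: propagation, and the main obstacle.} Granting the lemma, stability is a contradiction argument using conservation: if the conclusion failed for some $\eps>0$, there would be data $\xi_0^{(n)}\to\xi_H$ in $L^1\cap L^2\cap L^1_w$ and times $t_n$ with $\inf_\tau\nrm{\xi^{(n)}(t_n,\cdot+\tau\bfe_z)-\xi_H}_{L^1\cap L^2\cap L^1_w}\ge\eps$; but $E$, $\calI$, and the distribution of $\xi^{(n)}(t_n)$ equal those of $\xi_0^{(n)}$ and hence converge to those of $\xi_H$, so $\set{\xi^{(n)}(t_n)}$ is a maximizing family as in Step 2, and the lemma produces shifts contradicting the lower bound. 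I expect Step 2 to be the crux, in two respects: (i) excluding dichotomy rigorously---turning the scaling heuristic for superadditivity into a genuine strict inequality for the constrained problem, i.e.\ that a single concentrated Hill's vortex is strictly more energetic than any splitting of its vorticity at the same impulse; and (ii) \emph{robustness}, since the perturbed data are only \emph{approximately} equimeasurable with $\xi_H$ and need not satisfy $\xi_0\le1$---one must prove the compactness for families whose constraints hold only up to $o(1)$, and show that the small excess mass above level $1$ (controlled by $\nrm{\xi_0-\xi_H}_{L^2}$ through the conserved distribution) does not spoil the bang-bang rigidity used to pass from weak to strong convergence.
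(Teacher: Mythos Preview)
This statement is not proved in the present paper; it is quoted verbatim as \cite[Theorem 1.2]{Choi2024} and used only as background motivation. The paper does, however, import from \cite{Choi2024} precisely the ingredients your proposal identifies---the variational characterization of $\xi_H$ as the unique energy maximizer under an impulse constraint (Theorem~\ref{thm: uniqueness of maximizer} here), the explicit scaling $\calI_{\mu,1,1}\propto\mu^{7/5}$ (Remark~\ref{rmk_energy is scaling}), and the concentration--compactness lemma for approximate maximizing sequences with only approximate constraints (Theorem~\ref{thm: compactness of maximizer set})---and deploys them in its own proof of the pair stability (Theorem~\ref{thm: main result of mu with no shift estimate}), whose structure mirrors your Step~3 contradiction argument almost exactly. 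Your outline is therefore an accurate reconstruction of the strategy of \cite{Choi2024}; in particular your ``robustness'' worry (ii) is exactly what Theorem~\ref{thm: compactness of maximizer set} addresses via the condition $\int_{\set{|\xi_n-\lmb|\geq a_n}}\xi_n\to0$ with $a_n\to0$, and the paper's proof of Theorem~\ref{thm: main result of mu with no shift estimate} shows explicitly (in part [3]) how to verify this from $\nrm{\xi_n(0)-\xi_H^\mu}_{L^2}\to0$ and conservation of the distribution.

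One small correction: in your Step~3 you should use conservation of impulse for \emph{nonnegative} axisymmetric data, which is indeed exact (the self-interaction integral vanishes by the antisymmetry you describe). The strict \emph{decrease} of impulse (Lemma~\ref{lem: monoticity of impulse} here) is specific to the odd-symmetric pair setting and is not needed for the single-vortex theorem.
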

The main idea was that, for any solution $\xi(t)$, if its kinetic energy $E[\xi(t)]\simeq E[\xi_H]$ is conserved in time, then the global-in-time stability is obtained: $$\xi\simeq\xi_H\q\mbox{up to translation},$$ based on some energy-maximization problem concerning Hill's vortex $\xi_H$ (See Subsection~\ref{subsec: var. prob. Hills vortex}). In this paper, on the other hand, we assume that the solution has odd symmetry
$$\xi(t,(r,z))=-\xi(t,(r,-z))\geq0\qd\mbox{for }r>0,z\geq0,$$ we decompose the solution $\xi=\xi(t)$ as 
$$\xi=\xi^+-\xi^-\qd\mbox{for}\qd
\xi^+=\xi\big|_{z>0}\q\mbox{and}\q
\xi^-=-\xi\big|_{z<0},$$
to observe that
\begin{equation}\label{eq: energy relation}
    E[\xi(0)]=E[\xi(t)]=
2\calE\left[\xi^+(t),\xi^+(t)\right]
-2\calE\left[\xi^+(t),\xi^-(t)\right]
=2\underbrace{E[\xi^+(t)]}_{>0}
-2\underbrace{E_{inter}(t)}_{>0}
\end{equation}
for the interaction energy $$E_{inter}(t)=\calE[\xi^+(t),\xi^-(t)]>0$$
(Its positivity stems from the odd symmetry of $\xi$). Here we can see that, despite the conservation of total kinetic energy 
$E[\xi]=E\left[\xi^+-\xi^-\right]$, the kinetic energy of the non-negative part $E[\xi^+]$ and the interaction energy $E_{inter}$ are not conserved in time.
The fact that $E[\xi^+]$ is not conserved is a main obstacle in proving the stability of two oppositely signed Hill's spherical vortices in the same way as \cite{Choi2024}. We overcome this obstacle based on the strategy of \textit{keeping $E_{inter}(t)$ small enough for all time}
and so maintaining the kinetic energy $E[\xi^+(t)]$ close enough to the initial energy $E[\xi^+(0)]$, though it is not conserved. We obtain $E_{inter}(t)\ll1$ by following steps:\\

[Step 1] In the relation 
$$0< E_{inter}(t)= E[\xi^+(t)]-E[\xi^+(0)]+E_{inter}(0),$$ we choose initial data $\xi^+(0)$ close enough to $\xi_H$ (up to translation) and obtain 
$$
E[\xi^+(0)]\simeq E[\xi_H].
$$ 

[Step 2] We prove that 
$$\nrm{r^2\xi^+(t)}_{L^1}<\nrm{r^2\xi^+(0)}_{L^1}\qd\mbox{for }t\geq0$$ (see Lemma~\ref{lem: monoticity of impulse}), which would turn out to imply 
$$E[\xi^+(t)]< E[\xi_H]+\eps\simeq  E[\xi^+(0)]+\eps$$ with some small error $\eps\ll1$. This is achieved by showing that the decrease of the impulse $\nrm{r^2\xi^+(t)}_{L^1}$ prevents the solution $\xi^+(t)$ from possessing the kinetic energy much larger than the Hill's vortex $\xi_H$: the energy maximizer under the fixed impulse.\\

[Step 3] Roughly speaking, we have obtained 
$$0<E_{inter}(t)<E_{inter}(0)+\eps.$$
We also get $E_{inter}(0)=\calE\left[\xi^+(0),\xi^-(0)\right]\ll1$ by assuming that the supports of $\xi^+(0)$ and $\xi^-(0)$ are far away from one another. Hence, we get $$E_{inter}(t)\ll1.$$ These sorts of ideas can be found in \cite{CJY2024}, which dealt with the odd-pair of the Chaplygin-Lamb dipole. 

\subsubsection{Shift function estimate}\q

As a function of time $t\geq0$, the shift function $\tau:[0,\ift)\to[1,\ift)$ in Theorem~\ref{thm: main result with no shift estimate} refers to the $z$-translation of the Hill's spherical vortex $\xi_H$ to which \textit{the positive part of the solution $\xi(t)$} is close enough, i.e.,
$$\xi^+(t)=\xi(t)\big|_{z>0}\simeq \xi_H(\cdot-\tau(t)\bfe_z).$$ Since $\xi_H$ travels in time with the constant velocity $W_H\bfe_z$ with $W_H=2/15$, we expect that \textit{the shift function $\tau(t)$ is close enough to $W_H t$ in a finite time interval} (Theorem~\ref{thm: shift estimate}), i.e.,
$$\tau(t)-\tau(0)\simeq W_H t.$$
Recalling that $\xi_H$ has even symmetry: 
$$\xi_H(r,z)=\xi_H(r,-z)\qd\mbox{for}\q (r,z)\in\Pi$$ which implies $\int_{\bbR^3}x_3\xi_H\dd\bfx=0,$ we note that the shift function $\tau(t)$ is obtained by the relation 
\begin{equation}\label{eq0324_1}
    \tau(t)= \nrm{\xi_H}_{L^1(\bbR^3)}^{-1}
\int_{\bbR^3}x_3\xi_H(\cdot-\tau(t)\bfe_z)\,\dd\bfx\,\simeq\,
\nrm{\xi^+(t)}_{L^1(\bbR^3)}^{-1}\int_{\bbR^3}x_3\xi^+(t)\,\dd\bfx
\end{equation}
implying that the \textit{center of mass on $z$-axis of the solution $\xi^+(t)$ is close to the translation $\tau(t)$}. Similarly, the traveling speed $W_H$ of the Hill's vortex $\xi_H$ is obtained by the relation 
\begin{equation}\label{eq0324_2}
    W_H t= \nrm{\xi_H}_{L^1(\bbR^3)}^{-1}
\int_{\bbR^3}x_3\xi_H(\cdot-tW_H\bfe_z)\,\dd\bfx.
\end{equation} Using the observations \eqref{eq0324_1} and \eqref{eq0324_2}, we may obtain $\tau(t)-\tau(0)\simeq W_H t$ by proving that 
\begin{equation}\label{eq0324_3}
    \nrm{\xi^+(t)}_{L^1(\bbR^3)}^{-1}\int_{\bbR^3}x_3\xi^+(t)\,\dd\bfx-\q\tau(0)\q\simeq\q \nrm{\xi_H}_{L^1(\bbR^3)}^{-1}
\int_{\bbR^3}x_3\xi_H(\cdot-tW_H\bfe_z)\,\dd\bfx.
\end{equation}
Here, one may conceive a complication in that $\xi^+(t)$ is the positive part of the actual solution $\xi(t)$ of \eqref{eq: axi no swirl Euler}, while $\xi_H(\cdot-tW_H\bfe_z)$ denotes the genuine (traveling) solution of \eqref{eq: axi no swirl Euler}. However, we can expect the following: \\

\textit{As long as the positive part $\xi^+(t)$ and negative part $\xi^-(t)$ are at a long distance, i.e.,
$\tau(t)\gg1$, the dynamics of $\xi^+(t)$ and $\xi_H(\cdot-tW_H\bfe_z)$ are somehow similar in the sense of \eqref{eq0324_3}.} \\

\noindent Such an expectation is justified if one considers the time-derivative on both sides of \eqref{eq0324_3} and the integration by parts in both integrals. Essentially, we come up with a bootstrap argument with the following steps:\\

[Step 1] At the initial time $t=0$, we assume that  $\xi^+(0)$ and $\xi^-(0)$ are distant to each other, i.e.,
$$\tau(0)\gg1,$$
so that \eqref{eq0324_3} holds in a certain time interval $[0,T]$.\\

[Step 2] In the interval $[0,T]$, the relation \eqref{eq0324_3} gives $$\tau(T)>\tau(0)\gg1,$$ so that we go back to [Step 1] and repeat the same process. \\

[Step 3] We obtain that $\tau(t)\gg1$ for all $t\geq0$, and so we obtain \eqref{eq0324_3} for all $t\geq0$. It gives 
$$\tau(t)-\tau(0)\simeq W_H t.$$
We refer to \cite{CSW2025} for a similar idea to obtain the shift estimate in two-dimensional flow, concerning the stability of the Sadovskii vortex patches constructed in \cite{CJS2024} by a variational approach. See also \cite{CJ2023} for another way to estimate the translation of perturbed data near a single Hill's vortex. 

\subsection{Related topics}\label{subsec: related topic}\q 

\subsubsection{Concerning Hill's spherical vortex}\q 

Hill's spherical vortex, initially introduced by M. Hill in  \cite{Hill1894}, is an exact traveling wave solution to the equations of inviscid, incompressible, axisymmetric fluid flow. In this model, a spherical region $\set{\bfx\in\bbR^3:\, |\bfx|<1}$ contains a nonzero vorticity, while the surrounding fluid remains irrotational.  Further analytical treatment was provided in Hydrodynamics \cite{Lamb1993}, formalizing the spherical vortex as a steady, axisymmetric solution to Euler’s equations. It is later extended to a one-parameter family of vortex rings, incorporating Hill’s vortex as a limiting case, together with some numerical descriptions (\cite{Norbury1972, Norbury1973}). Shapes and properties of steady vortex rings with small cross-sections were explored in \cite{Fraenkel1972}, emphasizing their connection to Hill's vortex. 

On the other hand, several studies have focused on the properties of Hill's vortex, including the uniqueness and stability/instability. In \cite{AF1986}, it was rigorously proved that Hill’s vortex is the unique (weak) solution to Hill's problem of finding steady vortex rings under certain choices of vorticity function $\left(f(\cdot)=\bfone_{\set{\cdot>0}}\right)$ and the flux constant ($\gmm=0$). A nonlinear evolution of several axisymmetric perturbations to Hill’s spherical vortex was observed in \cite{Pozrikidis1986} using numerical simulations, indicating its nonlinear instability. The linear stability of Hill's vortex to axisymmetric perturbations is given in \cite{PE2016}, based on a spectral approach. In \cite{Wan1988}, it was shown that the Hill's vortex is a nondegenerate local maximum of the kinetic energy among vortex rings subject to a fixed impulse. Recently, the Lyapunov stability was obtained in \cite{Choi2024}, showing that Hill’s vortex is the unique maximizer of the kinetic energy under fixed impulse and circulation constraints, \textit{which is one of the main ingredients of the current paper.} Later, the linear-in-time filamentation near Hill's vortex was shown in \cite{CJ2023}, which confirms the numerical result of nonlinear instability given in  \cite{Pozrikidis1986}, based on the stability of Hill's vortex obtained by the first author, combined with additional shift function estimates and some bootstrap arguments. 

\subsubsection{Vortex quadrupoles in two-dimensional flow}\q

Due to the similarity between the settings of 2D flow and of the 3D axisymmetric flow without swirl, we give some previous results that can be seen as 2D counterparts of the 3D axisymmetric result given in Theorem~\ref{thm: main result with no shift estimate}, i.e., some results regarding 2D vorticity solutions satisfying 
\begin{equation}\label{eq0723_1}
    \omg(x_1,x_2)=-\omg(x_1,-x_2)=-\omg(-x_1,x_2)=\omg(-x_1,x_2)\geq0\qd\mbox{for}\q x_1,x_2>0.
\end{equation}
We first refer to \cite{CJY2024} for some two-dimensional results. They proved the stability of several vortex-quadrupoles having odd-odd symmetry and the nonnegativity in the first quadrant. Two kinds of vorticity (in the first quadrant) were addressed: \textit{sufficiently concentrated vorticity around a point} and \textit{the Chaplygin-Lamb dipole}. The stability of the latter was obtained mainly by the monotonicity of the first moment given in \cite{ISG1999}, some careful analysis of the interaction energy between the vortices, and the stability result of a single Chaplygin-Lamb dipole given in \cite{AC2022}. Especially, much of the essential structure and ideas of the proof of Theorem~\ref{thm: main result with no shift estimate} have been taken from the stability of a pair of Chaplygin-Lamb dipoles 
given in \cite{CJY2024}. We refer to \cite{DPMP2023} for the existence result of similar quadrupoles, which has the form of the desingularization of 4 point-vortices, together with some specific analysis of the profile of their vortex quadrupoles. See also \cite{Yang2021} for the result regarding the behavior of 4 point-vortices with odd-odd symmetry.

\subsubsection{Multi-vortex problems}\q

As the long-time dynamics of the Euler flow is a highly challenging problem in general, the multi-vortex problems are expected to play a role in understanding the evolution of the Euler flow as $t\to\ift$. It is to ask whether several coherent vortex structures--each of which would move coherently (translate, rotate, etc) if isolated--can be arranged so that they keep their shapes while their mutual interaction weakens over time, typically because they separate.

The key strategy is to construct a configuration of well‑separated vortex objects and prove that the residual interaction is too small to destroy their rigidity/coherency. In vortex dynamics, canonical building blocks might include translating dipoles (e.g., Chaplygin-Lamb dipole, Sadovskii vortex patches), rotating V-states (e.g., Rankine vortex, Kirchhoff ellipse), axisymmetric vortex rings including Hill's vortex, and desingularized point vortex (or point-vortex ring). The question is whether copies of such structures can coexist, either moving apart or forming choreographed motions, without significant deformation. Several recent works have used the Chaplygin-Lamb dipole as a building block to construct and solve multi-vortex problems; see \cite{CJY2024} for the case of the dipole pair moving away from each other, and see \cite{AJY2025} for the stability of finite sums of the dipoles moving together in one direction, in which the faster dipoles are positioned to the right of the slower ones.

Several equilibria or explicit dynamics of point vortex system  (e.g., various pairs, Thomson polygons, Von K\'arm\'an vortex streets, self-similar expanding triples, leapfrogging motions) provide a sketch of multi-vortex configurations in many cases (see \cite{Helmholtz1858, Kirchhoff1883} for the initial works of the point vortex system, followed by a vast amount of subsequent works; e.g., see \cite{HKILB1988, Newton2001, Aref2007, Lin1941, Lin1941_2}). Starting from well-known configurations such as odd (or even) dipole (see  \cite{Tur1983, HM2017, HH2021, CLZ2021, GH2023, HH2021} and references therein), the system has several equilibria consisting of multiple point vortices. 

For example, a point-vortex quadruple with odd-odd symmetry can be viewed as a multi-vortex configuration comprised of an odd pair of traveling dipoles. The quadruple has an explicit trajectory (\cite{Lamb1993, Yang2021, CJY2024}), around which any of the desingularized quadrupole stays (see \cite[Theorem 1.2, Corollary 1.3]{CJY2024} and \cite{DPMP2023}). As a 3D analogy, head-on collision of anti-parallel vortex rings is a typical example and has gained enormous interest due to its significance in probing maximal vortex-stretching and possible blow-up in 3D incompressible Euler flow; see \cite{Oshima1978, SLZF1988, SL1992, CWCCC1995, GWRW2016, CLL2018, CJ2021} and references therein. Thomson’s polygon, $N$ point vortices located at the vertex of a regular polygon with $N$ sides, can be seen as a multi-vortex version of a co-rotating pair (e.g., see \cite{KY2022, Dhanak1992, Havelock1931, Kurakin2010, Kurakin2014, KO2022, CS1999, BHW2012, CLP2011} and references therein), and it was desingularized in \cite{Garcia2021}(see also \cite{HW2022}. For gSQG equations, we refer to \cite{GGS2023}) to obtain rotating $N$-patch choreographies. On the other hand, there also exist self-similarly expanding/contracting configurations composed of 3 (or 4,5) point vortices, known as the vortex collapse (see \cite{NS1979, LLZ2000, Aref1979, Aref2007}). For the case of expanding triples of points, see two recent papers \cite{DPMP2024, Zbarsky2021}. Moreover, for the 2D/3D leapfrogging motions, we refer to \cite{HHM2023, BG2019, GSPBB2021, Aiki2019, Acheson2000, TA2013, DPM2024, DHLM2025, BCM2025, CFQW2025} and references therein. These works show that many multi‑patch rigid motions survive beyond the idealized point‑vortex level. 

\subsection{Open questions}\q

We establish some open questions related to the current paper.

\subsubsection{Asymptotic behavior.}
Compared to the case of the single Hill's vortex, the variational approach for the odd-symmetric pair of Hill's vortices seems to demonstrate the possibility of a time-asymptotic behavior of perturbed initial data under some proper constraints. Here, we give some observations.\\ 

\textit{Observation 1: The energy of a single vortex is convergent as }
$$E[\xi^+(t)]\to \f{1}{2}E[\xi(0)]\qd\mbox{as}\q t\to\ift.$$ 
Theorem~\ref{thm: shift estimate} implies that, for any perturbed initial data $\xi_0$, the interaction energy given by 
$$E_{inter}(t)=\calE\left[\xi^+(t),\xi^-(t)\right]=\pi\iint_{\Pi\times\Pi}
G(r,z,r',z')\xi^+(t,(r,z))\xi^-(t,(r',z'))\q \dd r \dd z \dd r' \dd z'$$ vanishes as $t\to\ift$. Then, from the energy relation \eqref{eq: energy relation}:
$$E[\xi(t)]=2E[\xi^+(t)]-2E_{inter}(t)$$ where the total energy is conserved as $E[\xi(t)]=E[\xi(0)]$ for any $t\geq0$, it holds that the energy of  the single vortex, $E[\xi^+(t)]$, satisfies 
$$E[\xi^+(t)]\to \f{1}{2}E[\xi(0)]\qd\mbox{as}\q t\to\ift.$$ 

\textit{Observation 2: The impulse of a single vortex is convergent in that, for some $\mu>0$, we have 
$$\nrm{r^2\xi^+(t)}_{L^1}\to\mu\qd\mbox{as}\q t\to\ift.$$ } It follows from the facts that $\nrm{r^2\xi^+(t)}_{L^1}$ decreases in $t\geq0$ (by Lemma~\ref{lem: monoticity of impulse}) and that $\nrm{r^2\xi^+(t)}_{L^1}$ is bounded from below (due to $\left|\nrm{r^2\xi^+(t)}_{L^1}-\nrm{r^2\xi_H}_{L^1}\right|\ll1$).\\

The above observations imply that both energy $E[\xi^+(t)]$ and the impulse $\nrm{r^2\xi^+(t)}_{L^1}$ are convergent. Suppose we can find a way to characterize the impulse limit $\mu>0$ and find $\xi(0)$ such that the energy limit $\f{1}{2}E[\xi(0)]$ is the maximal energy under the impulse constraint $\mu$.  Then we may use the concentrated-compactness argument given in Theorem~\ref{thm: compactness of maximizer set} to prove that the solution $\xi^+(t)$ asymptotically approaches a maximizer set. After certain scaling, the Hill's vortex is a unique maximizer (Theorem~\ref{thm: uniqueness of maximizer}), implying that the solution $\xi^+(t)$ tends to the (scaled) Hill's vortex, up to translations. 

\subsubsection{Non-symmetric perturbed solutions}

Theorem~\ref{thm: main result with no shift estimate} concerns with initial data $\xi_0$ satisfying the symmetry 
$$\xi_0(r,z)=-\xi_0(r,-z)\geq0\qd\mbox{for}\q r,z\geq0.$$ Such odd-symmetry easily implies that the impulse $\nrm{r^2\xi(t)}_{L^1}$ decreases for all $t\geq0$, which would guarantees that the interaction energy $E_{inter}(t)=\calE[\xi^+(t),\xi^-(t)]$ tends to $0$ as $t\geq\ift$, so that each of $\xi^+(t),\xi^-(t)$ stays close to the Hill's vortex up to translations.

We now suppose that $\xi_0$ only has the sign condition: 
$$\xi_0(r,z),\,-\xi_0(r,-z)\geq0\qd\mbox{for}\q r,z\geq0,$$ while $\xi_0$ is close to the odd-symmetric pair of Hill's vortex. In Lemma~\ref{lem: monoticity of impulse}, the decrease of impulse $\nrm{r^2\xi(\cdot)}_{L^1}$ relies heavily on such a sign condition, not on the symmetry itself, (at least) up to a finite time. We believe that one can prove the global-in-time stability after proper bootstrap arguments. 

\subsubsection{Other types of multi-vortex}
Even though the current paper concerns the pair of Hill's vortex, it can be generalized to other types of vortices that are characterized as energy maximizers under certain constraints; e.g., see \cite[Remark 2.9]{CJY2024} for a similar discussion. One can also try to increase the number of vortices to more than two, provided that the interaction decreases under certain configurations. As far as the interaction energy vanishes, the vortices will nearly retain their initial shapes, implying their stability.

\subsection{Organization of paper}\q

Section~\ref{sec: preliminary} contains some background and preliminaries. The stability (Theorem~\ref{thm: main result with no shift estimate}) is proved in Section~\ref{sec: stability}. Based on the stability result, we obtain the shift function estimate (Theorem~\ref{thm: shift estimate}) in Section~\ref{sec: shift}.

\section{Preliminaries}\label{sec: preliminary}

\subsection{Notations}\q
\begin{itemize}
    \item We define the half-space $\bbR^3_+$ as 
$$\bbR^3_+:=\left\{\bfx\in\bbR^3:\, x_3>0\right\}.$$
\item We define $\+{\cdot}:\bbR\to\bbR$  as $$\+{s}:=\max\set{s,0}.$$
\item We define $$\Pi:=\left\{(r,z)\in\bbR^2:\, r>0\right\}
\qd\mbox{and}\qd Q:=\left\{(r,z)\in\Pi:\, z>0\right\}.$$
\item We denote each norm of $L^p(\bbR^3)$ for $p\in[1,\ift)$ as 
$$\nrm{f}_p:= \nrm{f}_{L^p(\bbR^3)}.$$
\item  We define the weighted space as 
$$L^1_w(D):=\left\{ f=(x_1^2+x_2^2)^{-1}g:\, g\in L^1(D)\right\}\qd\mbox{for given domain}\q D\subset\bbR^3$$ with the norm
$$\nrm{f}_{L^1_w(D)}:=\int_D(x_1^2+x_2^2)|f(\bfx)|\,\dd\bfx.$$ For convenience, for the domain $D=\bbR^3$, we denote the norm of $L^1_w(\bbR^3)$ as 
$$\nrm{r^2f}_1:=\nrm{f}_{L^1_w(\bbR^3)}$$ which will be called as \textit{impulse}. 
\end{itemize}

\subsection{Anti-parallel flow}\q

In subsection~\ref{subsec: axi-sym.no.swirl}, the 3D axisymmetric Euler equations with no swirl have the reduced form \eqref{eq: axi no swirl Euler}: $$\rd_t\xi+\bfu\cdot\nb\xi=0\qd\mbox{in}\q [0,\ift)\times\bbR^3\qd\mbox{where }\bfu=\calK[\xi]$$ with a given initial data $\xi_0$. Specifically, we deal with the solution that has odd symmetry: 
    $$\xi(t,(r,z))=-\xi(t,(r,-z))\geq0\qd\mbox{for}\q r>0,\, z\geq0.$$ In other words, we consider the anti-parallel flow that is non-negative in the upper half-space $\bbR^3_+$. In this case, we identify the non-negative part $\+{\xi}$ of the solution $\xi$ as the restriction of $\xi$ to the half space $\bbR^3_+$. 
    Then the following lemma shows that the impulse of such a solution decreases strictly in time (We do not claim any novelty of this result. For example, it can be found from \cite[Theorem 1.1]{CJ2021}. Still, we give the proof for completeness).

\begin{lem}\label{lem: monoticity of impulse}
    For any nontrivial initial data $\xi_0\in \left(L^1\cap L^\ift\cap L^1_w\right)(\bbR^3)$ satisfying  $r\xi_0\in L^\ift(\bbR^3)$ with the odd symmetry
    $$\xi_0(r,z)=-\xi_0(r,-z)\geq0\qd\mbox{for}\q r>0,\, z\geq0,$$ the impulse of its unique solution $\xi\in L^\ift\left(0,\ift;\left(L^1\cap L^\ift\right)(\bbR^3)\right)$ of \eqref{eq: axi no swirl Euler},
    $$\nrm{r^2\xi(t)}_1=\int_{\bbR^3}(x_1^2+x_2^2)
    |\xi(t)|\,\dd\bfx,$$ decreases strictly in $t\geq0.$
\end{lem}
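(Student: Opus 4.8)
The plan is to compute $\frac{d}{dt}\nrm{r^2\xi(t)}_1$ directly using the transport equation \eqref{eq: axi no swirl Euler} and the sign structure of the solution. Since the solution is odd in $z$ and nonnegative on $\bbR^3_+$, we may write $\nrm{r^2\xi(t)}_1 = 2\int_{\bbR^3_+} r^2\,\xi(t)\,\dd\bfx = 2\int_{\bbR^3_+} r^2\,\+{\xi}(t)\,\dd\bfx$, where integration is against $r\,\dd r\,\dd z$ after reduction to the half-plane $Q$. Using $\rd_t\xi = -\bfu\cdot\nb\xi = -\nb\cdot(\bfu\,\xi)$ (by incompressibility) and integrating by parts, the time derivative becomes $2\int_{\bbR^3_+} \bfu\cdot\nb(r^2)\,\xi\,\dd\bfx$, modulo boundary terms. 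The gradient $\nb(r^2) = 2r\,\bfe_r$ only sees the radial velocity $u^r = -\rd_z\calG[\xi]/r$, so after simplification one obtains an expression of the form $-c\int_{\set{z=0}} (\text{something})\,\dd(\text{surface})$ coming from the boundary of $\bbR^3_+$, i.e., the plane $\set{x_3 = 0}$. First I would carry out this computation carefully, tracking which boundary term survives: the contributions at spatial infinity vanish because $\xi_0 \in L^1\cap L^\infty \cap L^1_w$ with $r\xi_0 \in L^\infty$ (these decay assumptions are exactly what make the impulse finite and the flux terms controllable), and on the plane $\set{z=0}$ the relative vorticity $\xi$ itself vanishes by odd symmetry, but $u^z$ or $\calG[\xi]$ need not.

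The key point is that the surviving boundary term has a definite sign. On the symmetry plane $\set{z=0}$, by the odd symmetry of $\xi$ one has $u^r = -\rd_z\calG[\xi]/r = 0$ there (since $\calG[\xi]$ is even in $z$, being the convolution of the even Green's function structure against an odd $\xi$ — wait, one must check the parity carefully: $\xi$ odd in $z$ forces $\calG[\xi]$ odd in $z$, hence $\rd_z\calG[\xi]$ even in $z$ and generically nonzero at $z=0$, while $\rd_r\calG[\xi]$ is odd in $z$ and so vanishes at $z=0$, giving $u^z|_{z=0}=0$). So the flow does not cross the plane $\set{z=0}$, as expected. The correct way to see the decrease is instead to differentiate $\int_{\bbR^3_+} r^2 \+{\xi}\,\dd\bfx$ and note that $\+{\xi}$ satisfies the same transport equation on $\bbR^3_+$ with zero flux through $\set{z=0}$; then $\frac{d}{dt}\int_{\bbR^3_+} r^2\+{\xi}\,\dd\bfx = \int_{\bbR^3_+} \+{\xi}\,\bfu\cdot\nb(r^2)\,\dd\bfx = 2\int_{\bbR^3_+} \+{\xi}\, r\, u^r\,\dd\bfx$. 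Substituting $u^r = -\rd_z\calG[\xi]/r$ and using the representation of $\calG$ via the positive Green's function $G$, one rewrites this as a double integral over $\bbR^3_+ \times \bbR^3_+$ with kernel $-\rd_z G(r,z,r',z')$ plus the image contribution from the negative part $-\xi^-$ in the lower half-space; the odd reflection turns the full-space kernel into $\rd_z\big(G(r,z,r',z') - G(r,z,r',-z')\big)$ or similar, and the sign of this combination on $\bbR^3_+\times\bbR^3_+$ is what forces strict monotonicity.

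The main obstacle — and the step requiring the most care — will be establishing the definite (negative) sign of this double-integral expression, i.e., showing that the attractive interaction with the image vortex systematically pushes mass toward the axis in the impulse-weighted sense. Concretely I expect the argument to reduce to a pointwise inequality for $\rd_{z} G(r,z,r',z') - \rd_{z}G(r,z,r',-z')$ (or its symmetrization in the primed and unprimed variables) on the region $z, z' > 0$, which can be attacked via the explicit integral formula \eqref{eq: Green function formula} for $G$ by differentiating under the integral sign in $\vartheta$ and comparing the two denominators $r^2+r'^2 - 2rr'\cos\vartheta + (z\mp z')^2$. A secondary technical point is justifying the integration by parts and the vanishing of the terms at infinity rigorously at the level of weak solutions, rather than for smooth solutions; this is handled by a standard approximation/truncation argument (truncate with a cutoff $\chi(|\bfx|/R)$, use the $L^1\cap L^\infty\cap L^1_w$ bounds to pass $R\to\infty$), and since the statement explicitly disclaims novelty and refers to \cite[Theorem 1.1]{CJ2021}, I would keep this part brief and cite the established well-posedness theory for the relevant bounds. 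Strictness of the decrease follows because $\xi_0$ is nontrivial and nonnegative on $\bbR^3_+$, so the integrand in the sign-definite double integral cannot vanish identically.
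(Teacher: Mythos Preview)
Your plan is correct and matches the paper's proof essentially step for step: differentiate the half-space impulse, use the transport equation and integration by parts to obtain $\int_{\bbR^3_+} r\,u^r\,\+{\xi}\,\dd\bfx$, substitute $u^r=-\rd_z\calG[\xi]/r$, use the odd symmetry to rewrite $\calG[\xi]$ as an integral over $Q$ with the image kernel $G(r,z,r',z')-G(r,z,r',-z')$, and then establish the sign of the resulting double integral via the explicit formula \eqref{eq: Green function formula}. The paper makes the last step slightly cleaner by writing $G(r,z,r',z')=\mathscr{G}(r,r',z-z')$ and observing separately that (i) the self-interaction term $(\rd_s\mathscr{G})(r,r',z-z')$ integrates to zero over $Q\times Q$ by oddness in $s$ and the swap $z\leftrightarrow z'$ (your parenthetical ``symmetrization'' is exactly this), and (ii) the image term $(\rd_s\mathscr{G})(r,r',z+z')$ is strictly negative pointwise for $z,z'>0$ by splitting the $\vartheta$-integral at $\pi/2$ --- so you need not look for a pointwise sign on the full difference, only on the image piece.
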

\begin{proof} 
We note that the solution $\xi(t)$ also has odd symmetry with a sign condition as 
$$\xi(t,(r,z))=-\xi(t,(r,-z))\geq0\qd\mbox{for each}\q r>0,z\geq0.$$ Then we have 
    \begin{equation}\label{eq0312_1}
        \begin{aligned}
        \f{1}{2}\f{d}{dt}\nrm{r^2\xi}_1=\f{d}{dt}\int_{x_3>0}(x_1^2+x_2^2)
    \xi\,\dd\bfx\,&=\,
    \int_{x_3>0}(x_1^2+x_2^2)
    \rd_t\xi\,\dd\bfx\\
    &=\, -\int_{x_3>0}(x_1^2+x_2^2)(\bfu\cdot\nb\xi)\,\dd\bfx\\
    &=\, 2\int_{x_3>0}((x_1,x_2,0)\cdot\bfu)\xi\,\dd\bfx\\
    &=\, 4\pi\int_Q\left(r^2u^r\right)\+{\xi}\dd r\dd z 
    \end{aligned}
    \end{equation} using integration by parts. We recall \eqref{eq: axi. sym. biot-savart law} , which gives 
    $$u^r(r,z)=-\f{\rd_z\calG[\xi](r,z)}{r}.$$
    Here, considering  the odd symmetry of $\xi$, we put 
    $$\begin{aligned}
        \calG[\xi](r,z)\,&=\,\int_\Pi G(r,z,r',z')\xi(r',z')r'\dd r'\dd z'\\
        &=\, \int_Q \left[G(r,z,r',z')-G(r,z,r',-z')\right]\+{\xi(r',z')}r'\dd r'\dd z'.
    \end{aligned}$$
    Then we observe that
    $$G(r,z,r',z')=\f{rr'}{2\pi}\ii{0}{\pi}\f{\cos\vartheta}{\sqrt{r^2+r'^2-2rr'\cos\vartheta+(z-z')^2}}\dd\vartheta=\mathscr{G}(r,r',z-z')$$ for the function $\mathscr{G}(r,r',\cdot):\bbR\to\bbR$ is defined as 
    $$\mathscr{G}(r,r',s):=\f{rr'}{2\pi}\ii{0}{\pi}\f{\cos\vartheta}{\sqrt{r^2+r'^2-2rr'\cos\vartheta+s^2}}\dd\vartheta$$ for fixed $r,r'>0$. Then we rewrite \eqref{eq0312_1} as 
    $$\begin{aligned}
        \f{1}{2}\f{d}{dt}\nrm{r^2\xi}_1\,&=\,
    -4\pi\int_{Q\times Q}
    \left[\left(\rd_s\mathscr{G}\right)(r,r',z-z')-
    \left(\rd_s\mathscr{G}\right)(r,r',z+z')\right]\+{\xi(r,z)}\+{\xi(r',z')}rr'\dd r\dd z \dd r'\dd z'\\&=:\, -4\pi\left[(I)-(II)\right].
    \end{aligned}$$ We observe that the right-hand side is negative given that the following properties hold for fixed $r,r'>0$: \\
    
    (i) $\rd_s\mathscr{G}(r,r',s)$ is an odd function in $s\in\bbR$.\\
    
    (ii) It holds that $\rd_s\mathscr{G}(r,r',s)<0$ for each $s>0$.\\

    \noindent Indeed, we observe that (i) implies  
    $$(I)=\int_{Q\times Q}\left(\rd_s\mathscr{G}\right)(r,r',z-z')\cdot\+{\xi(r,z)}\+{\xi(r',z')}rr'\dd r\dd z \dd r'\dd z'=0$$ by changing the variable $z\leftrightarrow z'$, and that (ii) implies 
    $$(II)=\int_{Q\times Q}\left(\rd_s\mathscr{G}\right)(r,r',z+z')\cdot\+{\xi(r,z)}\+{\xi(r',z')}rr' \dd r\dd z \dd r'\dd z'<0.$$ Then we obtain 
    $$\f{1}{2}\f{d}{dt}\nrm{r^2\xi}_1=-4\pi\left[(I)-(II)\right]<0$$ which completes the proof of Lemma~\ref{lem: monoticity of impulse}. It remains to show the properties (i) and (ii). We first observe that (i) follows directly from the observation that $\mathscr{G}(r,r',s)$ is an even function in $s\in\bbR$. For (ii), we observe that 
    $$\begin{aligned}
        \rd_s\mathscr{G}(r,r',s)\,&=\,\f{rr'}{2\pi}\ii{0}{\pi}\f{-s\cdot\cos\vartheta}{(r^2+r'^2-2rr'\cos\vartheta+s^2)^{3/2}}\dd\vartheta\\
        &=\, \f{rr'}{2\pi}\left[\ii{0}{\pi/2}+\ii{\pi/2}{\pi}\right]\\
        &=\f{rr'}{2\pi}\ii{0}{\pi/2}\left[
        \f{-s\cdot\cos\vartheta}{(r^2+r'^2-2rr'\cos\vartheta+s^2)^{3/2}}+\f{s\cdot\cos\vartheta}{(r^2+r'^2+2rr'\cos\vartheta+s^2)^{3/2}}\right]\dd\vartheta\\
        &=\f{rr'}{2\pi}\ii{0}{\pi/2}\underbrace{(s\cdot\cos\vartheta)}_{>0}\underbrace{\left[
        \f{-1}{(r^2+r'^2-2rr'\cos\vartheta+s^2)^{3/2}}+
        \f{1}{(r^2+r'^2+2rr'\cos\vartheta+s^2)^{3/2}}
        \right]}_{<0}\dd\vartheta<0
    \end{aligned}$$ given that $s>0$. The proof is complete.
\end{proof}

\begin{rmk}\label{rmk: L^ift bound of velocity}
    Lemma~\ref{lem: monoticity of impulse} implies that the velocity $\bfu=\calK[\xi]$ has a uniform bound in space-time, given the odd symmetry of the solution:
    $$\xi(t,(r,z))=-\xi(t,(r,-z))\geq0\qd\mbox{for each }r>0,\,z\geq0.$$ To see this, we refer to an inequality:
    \begin{equation}\label{eq: Reng-Sverak inequality}
        \nrm{\calK[\xi]}_{L^\ift(\bbR^3)}
        \lesssim
        \nrm{\xi}_{L^\ift(\bbR^3)}^{1/2}
        \nrm{\xi}_{L^1(\bbR^3)}^{1/4}
        \nrm{\xi}_{L^1_w(\bbR^3)}^{1/4}
    \end{equation} given in \cite{FS2015}. While $L^1$ and $L^\ift$ norms are conserved:
    $$\nrm{\xi(t)}_{L^\ift(\bbR^3)}=
    \nrm{\xi_0}_{L^\ift(\bbR^3)}\qd\mbox{and}\qd
    \nrm{\xi(t)}_{L^1(\bbR^3)}=
    \nrm{\xi_0}_{L^1(\bbR^3)},$$ Lemma~\ref{lem: monoticity of impulse} says that the quantity 
    $$\nrm{\+{\xi(t)}}_{L^1_w(\bbR^3)}$$ decreases strictly in $t\geq0$ if we additionally assume the odd symmetry:
    $$\xi_0(r,z)=-\xi_0(r,-z)\geq0\qd\mbox{for}\q r>0,\, z\geq0.$$ Then we obtain a uniform bound of velocity:
    \begin{equation}\label{eq: L^ift bound of velocity}
\sup_{t\geq0}\nrm{\bfu(t)}_{L^\ift(\bbR^3)}
=\sup_{t\geq0}\nrm{\calK[\xi(t)]}_{L^\ift(\bbR^3)}
\lesssim\nrm{\xi_0}_{L^\ift(\bbR^3)}^{1/2}\nrm{\xi_0}_{L^1(\bbR^3)}^{1/4}\nrm{\xi_0}_{L^1_w(\bbR^3)}^{1/4}.
    \end{equation}
\end{rmk}

\subsection{Variational problem concerning Hill's spherical vortex}\label{subsec: var. prob. Hills vortex}\q

In this section, we give some results in \cite{Choi2024} about the variational problem concerning Hill's spherical vortex $\xi_H$. We first introduce a scaling family of Hill's vortex.
\begin{rmk}\label{rmk: scaling of Hill's vortex}
    By the scaling invariance of Euler equations, a family of traveling solutions parametrized by $\lmb,a>0$ is obtained from the scaling
$$\xi_{H(\lmb,a)}:=\lmb\q\xi_H(\bfx/a)\qd\mbox{and}\qd W_{H(\lmb,a)}:=\lmb a^2 W_H$$ such that the function 
$\xi_{\lmb,a}=\xi_{\lmb,a}(t,\bfx)$ defined by 
$$\xi_{\lmb,a}(t,\bfx):=\xi_{H(\lmb,a)}\left(\bfx-tW_{H(\lmb,a)}\bfe_z\right)\qd\mbox{for}\q t\geq0,\, \bfx\in\bbR^3$$ solves \eqref{eq: axi no swirl Euler}.
\end{rmk}

For a family of parameters $0<\mu,\nu,\lmb<\ift$, we define some sets of admissible functions 
\begin{equation*}
    \begin{aligned}
        \calP_{\mu,\nu,\lmb}&:=
\left\{\xi\in L^\ift(\bbR^3):\, \xi=\lmb\bfone_A\q\mbox{for some axisymmetric}\, A\subset\bbR^3,\,\f{1}{2}\nrm{r^2\xi}_1=\mu,\, \nrm{\xi}_1\leq\nu\right\},\\
\calP'_{\mu,\nu,\lmb}&:=
\left\{\xi\in L^\ift(\bbR^3):\, \xi:\mbox{axisymmetric,}\,0\leq\xi\leq\lmb,\,\f{1}{2}\nrm{r^2\xi}_1=\mu,\,
\nrm{\xi}_1\leq\nu\right\},\\
\calP''_{\mu,\nu,\lmb}&:=
\left\{\xi\in L^\ift(\bbR^3):\, \xi:\mbox{axisymmetric,}\,0\leq\xi\leq\lmb,\,\f{1}{2}\nrm{r^2\xi}_1\leq\mu,\, \nrm{\xi}_1\leq\nu\right\}
    \end{aligned}
\end{equation*} 
together with the maximal kinetic energies
$$
\calI_{\mu,\nu,\lmb}:= \sup_{\xi\in \calP_{\mu,\nu,\lmb}}E[\xi],\qd
\calI'_{\mu,\nu,\lmb}:=\sup_{\xi\in \calP'_{\mu,\nu,\lmb}}E[\xi],\qd
\calI''_{\mu,\nu,\lmb}:= \sup_{\xi\in \calP''_{\mu,\nu,\lmb}}E[\xi],\qd
$$ and the sets of maximizers 
$$
\begin{aligned}
    \calS_{\mu,\nu,\lmb}&:=\left\{\xi\in\calP_{\mu,\nu,\lmb}:\, E[\xi]=\calI_{\mu,\nu,\lmb}\right\},\qd\\
\calS'_{\mu,\nu,\lmb}&:=\left\{\xi\in\calP'_{\mu,\nu,\lmb}:\, E[\xi]=\calI'_{\mu,\nu,\lmb}\right\},\qd\\
\calS''_{\mu,\nu,\lmb}&:=\left\{\xi\in\calP''_{\mu,\nu,\lmb}:\, E[\xi]=\calI''_{\mu,\nu,\lmb}\right\}.
\end{aligned}
$$ By their definitions, we have the  relations 
\begin{equation}\label{eq0404_1}
    \calP_{\mu,\nu,\lmb}\subset
\calP'_{\mu,\nu,\lmb}\subset
\calP''_{\mu,\nu,\lmb}
\end{equation} and 
$$\calI_{\mu,\nu,\lmb}\leq
\calI'_{\mu,\nu,\lmb}\leq
\calI''_{\mu,\nu,\lmb}.$$ 

Theorem~\ref{thm: larger admissible sets} below says that, despite the set relations
\eqref{eq0404_1}, three admissible sets $\calP_{\mu,\nu,\lmb},\,
\calP'_{\mu,\nu,\lmb},\,\mbox{and}\,
\calP''_{\mu,\nu,\lmb}$ possess the same maximizer set. In other words, every energy maximizer in the largest admissible set $\calP''_{\mu,\nu,\lmb}$ belongs to the smallest admissible set $\calP_{\mu,\nu,\lmb}$. Theorem~\ref{thm: uniqueness of maximizer}, appearing after Theorem~\ref{thm: larger admissible sets}, characterizes the Hill's vortex 
$\xi_{H(\lmb,a)}$ as the unique energy maximizer in $\calP_{\mu,\nu,\lmb}$ under a certain assumption on the triple of parameters $(\mu,\nu,\lmb)$.
\begin{thm}\cite[Theorem 4.2]{Choi2024}\label{thm: larger admissible sets}
    For $0<\mu,\nu,\lmb<\ift$, we have 
    $$\calI_{\mu,\nu,\lmb}=
\calI'_{\mu,\nu,\lmb}=
\calI''_{\mu,\nu,\lmb}\in(0,\ift)
\qd\mbox{and}\qd 
\calS_{\mu,\nu,\lmb}=
\calS'_{\mu,\nu,\lmb}=
\calS''_{\mu,\nu,\lmb}\neq\0.
$$
\end{thm}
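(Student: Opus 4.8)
### Proof proposal for Theorem~\ref{thm: larger admissible sets}

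The plan is to prove the two chains of (in)equalities by a squeezing argument: since $\calP_{\mu,\nu,\lmb}\subset\calP'_{\mu,\nu,\lmb}\subset\calP''_{\mu,\nu,\lmb}$ already yields $\calI_{\mu,\nu,\lmb}\leq\calI'_{\mu,\nu,\lmb}\leq\calI''_{\mu,\nu,\lmb}$, it suffices to show the reverse inequality $\calI''_{\mu,\nu,\lmb}\leq\calI_{\mu,\nu,\lmb}$ together with the fact that any maximizer of $E$ over $\calP''_{\mu,\nu,\lmb}$ actually lies in $\calP_{\mu,\nu,\lmb}$ (i.e.\ it is a scaled characteristic function $\lmb\bfone_A$ and saturates the impulse constraint $\tfrac12\nrm{r^2\xi}_1=\mu$). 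Finiteness and positivity of the common value, $\calI_{\mu,\nu,\lmb}\in(0,\ift)$, should come from a standard a priori bound: $E[\xi]\lesssim\nrm{\xi}_{L^{6/5}}^2\lesssim\nrm{\xi}_1^{?}\nrm{\xi}_\ift^{?}$ via the Hardy–Littlewood–Sobolev / Young inequality applied to the Biot–Savart kernel (positivity is immediate since $G>0$ and $\xi\geq0$ nontrivial, noting $\calP''$ contains $\lmb\bfone_{B}$ for a small ball).

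The heart of the argument is a \emph{bathtub / rearrangement} step carried out on $E[\xi]$ as a functional on $\calP''_{\mu,\nu,\lmb}$. First I would establish existence of a maximizer $\xi^*\in\calP''_{\mu,\nu,\lmb}$: take a maximizing sequence, use the uniform $L^1\cap L^\ift$ and impulse bounds to extract a weak-$*$ limit, and check that $E$ is weak-$*$ upper semicontinuous (or even continuous along the sequence) using compactness properties of the kernel $G$ after cutting off the region near $r=0$ and near spatial infinity — the impulse bound $\tfrac12\nrm{r^2\xi}_1\leq\mu$ controls the tail in $r$, and one rules out mass escaping to $r=0$ or $z=\pm\ift$ by a concentration argument (this is essentially the content invoked later as Theorem~\ref{thm: compactness of maximizer set}). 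The constraint set $\calP''_{\mu,\nu,\lmb}$ is weak-$*$ closed since the constraints $0\leq\xi\leq\lmb$, $\tfrac12\nrm{r^2\xi}_1\leq\mu$, $\nrm\xi_1\leq\nu$ are all preserved under weak-$*$ limits. Then, given $\xi^*$, I would argue by a pointwise (pseudo-)rearrangement: because $E$ is increasing under rearranging the ``level sets'' of $\xi^*$ toward the regions where the potential $\calG[\xi^*]$ is largest, and because the kinetic energy functional is convex in a suitable sense (the map $\xi\mapsto E[\xi]$ on the slice $0\leq\xi\leq\lmb$ with fixed mass is maximized at an extreme point, namely a characteristic function $\lmb\bfone_A$), the maximizer must satisfy $\xi^*=\lmb\bfone_{A^*}$ for some axisymmetric $A^*$. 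A standard variational/Lagrange-multiplier computation then shows the impulse constraint is active, $\tfrac12\nrm{r^2\xi^*}_1=\mu$ (if it were slack, one could scale up to increase energy while staying admissible — this uses the explicit scaling from Remark~\ref{rmk: scaling of Hill's vortex} and monotonicity of $\calI$ in $\mu$). Hence $\xi^*\in\calP_{\mu,\nu,\lmb}$, giving $\calI''_{\mu,\nu,\lmb}=E[\xi^*]\leq\calI_{\mu,\nu,\lmb}$ and $\calS''_{\mu,\nu,\lmb}\subseteq\calS_{\mu,\nu,\lmb}$; combined with $\calS_{\mu,\nu,\lmb}\subseteq\calS''_{\mu,\nu,\lmb}$ (trivial from the set inclusion, once the energies agree) and the analogous statements for the primed set, all three maximizer sets coincide and are nonempty.

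The main obstacle I expect is the ``extreme point'' / bang-bang step: showing rigorously that on the order interval $\{0\leq\xi\leq\lmb\}$ with prescribed total mass (and impulse), the kinetic energy is maximized only by characteristic functions. The kinetic energy $E[\xi]=\calE[\xi,\xi]$ is a \emph{quadratic} form with positive kernel, so it is convex, not concave, along segments in $\xi$-space; the subtlety is that the admissible set is not the full order interval but its intersection with the affine mass constraint and the impulse constraint, and one must show that perturbations $\xi^*\pm t\eta$ supported where $0<\xi^*<\lmb$ (preserving mass and not increasing impulse appropriately) strictly increase $E$ in at least one sign — which forces $|\{0<\xi^*<\lmb\}|=0$. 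This is where one needs the structure of the associated ``semilinear'' profile equation: the Euler–Lagrange condition says $\calG[\xi^*]-\tfrac{W}{2}r^2$ is constant on $\supp\xi^*\cap\{\xi^*<\lmb\}$ and, combined with the maximum principle for the elliptic operator behind $\calG$, this forces the intermediate level set to be empty. One must also be careful that the impulse constraint being an \emph{inequality} $\leq\mu$ (not equality) in $\calP''$ does not open a loophole — handled by the scaling monotonicity argument above. I would also flag that all of this is attributed to \cite[Theorem 4.2]{Choi2024}, so the role here is to recall, not re-derive; a proof sketch citing the rearrangement and bang-bang structure, plus the Euler–Lagrange characterization, should suffice.
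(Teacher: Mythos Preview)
The paper does not give its own proof of this statement: Theorem~\ref{thm: larger admissible sets} is quoted verbatim from \cite[Theorem~4.2]{Choi2024} as a preliminary result, with no argument supplied here. There is therefore nothing in the present paper to compare your proposal against. You in fact recognize this yourself in the final sentence of your sketch, and that observation is the correct response: the appropriate ``proof'' in this paper is simply the citation.

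That said, your outline is broadly in the spirit of the argument in \cite{Choi2024}: one obtains a maximizer in the relaxed class $\calP''_{\mu,\nu,\lmb}$ by concentration-compactness, then uses a bathtub/bang-bang argument (exploiting convexity of the quadratic energy on the order interval) to force the maximizer to be of the form $\lmb\bfone_A$, and finally a scaling argument to show the impulse constraint is saturated. One small caution on logical ordering: you suggest extracting the maximizer by appealing to Theorem~\ref{thm: compactness of maximizer set}, but in \cite{Choi2024} the existence of a maximizer is established \emph{before} (and as input to) the compactness statement recorded here as Theorem~\ref{thm: compactness of maximizer set}; invoking the latter for the former would be circular. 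The existence step in \cite{Choi2024} is a direct concentration-compactness argument on maximizing sequences in the relaxed class.
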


\begin{thm}\cite[Theorem 3.2]{Choi2024}\label{thm: uniqueness of maximizer} There exists a constant $\calM_0>0$ such that for any constants $0<\mu,\nu,\lmb<\ift$ satisfying $\mu\nu^{-5/3}\lmb^{2/3}\leq \calM_0$, we have   
$$\calS_{\mu,\nu,\lmb}=\left\{ \xi_{H(\lmb,a)}(\cdot+c\bfe_z):\, c\in\bbR\right\}$$ where $\xi_{H(\lmb,a)}$ is the Hill's vortex for the vortex strength constant $\lmb$ with the radius $a=a(\lmb,\mu)>0$ solving the relation $\mu=(4/15)\pi\lmb a^5$.
\end{thm}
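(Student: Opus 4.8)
The plan is to reduce the statement, through scaling invariance and the equivalence/compactness result of Theorem~\ref{thm: larger admissible sets}, to the free‑boundary equation of a steady axisymmetric vortex ring, and then to invoke the Amick--Fraenkel uniqueness theorem~\cite{AF1986}. The smallness hypothesis $\mu\nu^{-5/3}\lmb^{2/3}\le\calM_0$ enters at exactly one point: to force the Lagrange multiplier attached to the constraint $\nrm{\cdot}_1\le\nu$ to vanish.

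First I would normalize by scaling. The map $\xi\mapsto\lmb_0^{-1}\xi(a_0\,\cdot)$ carries $E$, the classes $\calP_{\mu,\nu,\lmb},\calP'_{\mu,\nu,\lmb},\calP''_{\mu,\nu,\lmb}$, and the Hill family to themselves while leaving $\mu\nu^{-5/3}\lmb^{2/3}$ invariant, so it suffices to treat $\lmb=1$; after rescaling $\nu$ the hypothesis reads $\mu\le\calM_0\,\nu^{5/3}$. Write $\xi_H$ for the Hill vortex normalized by $\tfrac12\nrm{r^2\xi_H}_1=\mu$, i.e. $\mu=\tfrac{4}{15}\pi a^5$; then $\nrm{\xi_H}_1=\tfrac{4}{3}\pi a^3\simeq\mu^{3/5}$ and $E[\xi_H]\simeq\mu^{7/5}$. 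For $\calM_0$ small one gets $\nrm{\xi_H}_1<\nu$, so $\xi_H\in\calP_{\mu,\nu,1}$ and $\calI_{\mu,\nu,1}=\calI''_{\mu,\nu,1}\ge E[\xi_H]\simeq\mu^{7/5}$.

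Next I would extract a maximizer and write its Euler--Lagrange equation. By Theorem~\ref{thm: larger admissible sets} fix $\xi\in\calS_{\mu,\nu,1}=\calS''_{\mu,\nu,1}$, so $\xi=\bfone_A$ for an axisymmetric $A\subset\bbR^3$ with $\tfrac12\nrm{r^2\xi}_1=\mu$ and $\nrm{\xi}_1\le\nu$. Since the first variation of $E$ at $\xi$ is $\calG[\xi]$, the bathtub principle produces multipliers $\beta>0$ (impulse) and $\gamma\ge0$ (the bound $\nrm{\cdot}_1\le\nu$, with complementary slackness $\gamma(\nu-\nrm{\xi}_1)=0$) such that $A=\{\psi>0\}$ up to a null set, where $\psi:=\calG[\xi]-\tfrac{\beta}{2}r^2-\gamma$. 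Because $\calG[\xi]$ is the Stokes stream function and $r^2$ lies in the kernel of the Stokes operator $\calL:=\rd_r^2-\tfrac{1}{r}\rd_r+\rd_z^2$, the function $\psi$ solves the free‑boundary problem $-\calL\psi=r^2\bfone_{\{\psi>0\}}$ with $\calG[\xi]$ vanishing at spatial infinity and $\psi\equiv-\gamma$ on the axis $\{r=0\}$: that is, $\xi$ is the relative vorticity of a steady axisymmetric vortex ring with vorticity function $t\mapsto\bfone_{\{t>0\}}$ and flux constant proportional to $\gamma$. If $\gamma=0$, this is precisely Hill's problem, and the Amick--Fraenkel uniqueness theorem~\cite{AF1986} forces $A$ to be a ball centred on the axis; so $\xi$ is a vertical translate of some $\xi_{H(1,a')}$, the impulse equality gives $a'=a$, and undoing the scaling yields the stated description of $\calS_{\mu,\nu,\lmb}$.

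The hard part will be showing $\gamma=0$, and I expect this to be the only genuinely delicate step. The goal is a uniform bound $\nrm{\xi}_1=|A|\le C\mu^{3/5}$ with $C$ universal; granting it, small $\calM_0$ gives $|A|<\nu$, so the $L^1$‑constraint is strictly slack and complementary slackness forces $\gamma=0$, closing the argument via the previous paragraph. To obtain the bound I would first establish, from the explicit kernel~\eqref{eq: Green function formula} together with $0\le\xi\le1$ and $\tfrac12\nrm{r^2\xi}_1=\mu$ (an interpolation estimate for $\calG$ in the spirit of~\eqref{eq: Reng-Sverak inequality}), that $\nrm{\calG[\xi]}_{L^\ift}\lesssim\mu^{4/5}$, and, from virial identities for $\psi$ combined with the scaling‑sharp bounds $\mu^{7/5}\lesssim E[\xi]=\calI''_{\mu,\nu,1}\lesssim\mu^{7/5}$, that $\beta\simeq\mu^{2/5}$. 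Then $\tfrac{\beta}{2}r^2<\calG[\xi]\lesssim\mu^{4/5}$ on $A$ confines $A$ to the cylinder $\{r\lesssim\mu^{1/5}\}$; and since $G(r,z,r',z')$ decays in $|z-z'|$, a patch lying in that cylinder but with large $z$‑extent would distribute its impulse $\mu$ over many weakly interacting $z$‑slices and hence carry energy $o(\mu^{7/5})$, contradicting $E[\xi]\gtrsim\mu^{7/5}$. This caps the $z$‑extent of $A$ by $O(\mu^{1/5})$ and gives $|A|\lesssim\mu^{3/5}$. Everything else --- the scaling reduction, the rearrangement/bathtub step, the compactness input of Theorem~\ref{thm: larger admissible sets}, and the Amick--Fraenkel classification --- is standard or already available; making the confinement estimate rigorous and uniform in $\nu$ is where the real work lies.
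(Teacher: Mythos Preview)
The present paper does not prove this theorem: it is quoted from \cite[Theorem~3.2]{Choi2024} in Subsection~\ref{subsec: var. prob. Hills vortex} as a preliminary result and used as a black box thereafter. There is therefore no proof in this paper against which to compare your proposal.

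For what it is worth, your outline is the standard route and, as far as I know, the one taken in \cite{Choi2024}: normalize by scaling, use Theorem~\ref{thm: larger admissible sets} to obtain a patch maximizer $\xi=\bfone_A$, derive the Euler--Lagrange relation $A=\{\calG[\xi]>\tfrac{\beta}{2}r^2+\gamma\}$ with multipliers $\beta>0$, $\gamma\ge0$, show that the smallness of $\mu\nu^{-5/3}\lmb^{2/3}$ forces the mass constraint to be inactive (hence $\gamma=0$), and then invoke Amick--Fraenkel uniqueness \cite{AF1986}. You have also correctly isolated the only genuinely nontrivial step as the $\nu$-uniform bound $|A|\lesssim\mu^{3/5}$. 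One small caution: the pointwise bound $\nrm{\calG[\xi]}_{L^\ift}\lesssim\mu^{4/5}$ you propose to derive from $0\le\xi\le1$ and the impulse alone does not follow from \eqref{eq: Reng-Sverak inequality} (which controls $\calK[\xi]$ and involves $\nrm{\xi}_1$); it requires a direct near-field/far-field splitting of the kernel $G$ in which the impulse constraint is used to limit how much mass can sit near any given point at large $r$. This is doable, and your energy-based bound on the $z$-extent is the right complement to it, but be aware that neither estimate is quite off-the-shelf.
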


Indeed, the uniqueness result of energy maximizer, given in Theorem~\ref{thm: uniqueness of maximizer} above, implies that the kinetic energy $I_{\mu,\nu,\lmb}$ can be explicitly expressed using given parameters and the kinetic energy of the (unit) Hill's spherical vortex $\xi_H$. 
\begin{rmk}\label{rmk_energy is scaling}
    Theorem~\ref{thm: uniqueness of maximizer} implies that, for any $0<\mu,\nu,\lmb<\ift$ satisfying $\mu\nu^{-5/3}\lmb^{2/3}\leq \calM$, the maximal kinetic energy is explicitly given by
    $$\calI_{\mu,\nu,\lmb}=\left(\f{15}{4\pi}\right)^{7/5}E[\xi_H]\cdot\lmb^{3/5}\mu^{7/5} .$$ 
\end{rmk}

\subsection{Compactness of the set of maximizers}\q

We borrow some estimates concerning the kinetic energy (Lemma~\ref{lem: energy estimates}) and the compactness theorem (Theorem~\ref{thm: compactness of maximizer set}) from \cite{Choi2024}.  Lemma~\ref{lem: energy estimates} will be repeatedly used in this paper. Theorem~\ref{thm: compactness of maximizer set} says that, roughly speaking, any approximate sequence $\xi_n$ converges to a maximizer $\xi\in\calS_{\mu,\nu,\lmb}$ given that the sequence is (nearly) in the admissible set $\calP_{\mu,\nu,\lmb}$ with its kinetic energy $E[\xi_n]$ converging to the maximal kinetic energy $\calI_{\mu,\nu,\lmb}$.

\begin{lem}\cite[Lemma 2.3]{Choi2024}\label{lem: energy estimates}
    For axisymmetric $\xi,\xi_1,\xi_2\in \left(L^1\cap L^2\cap L^1_w\right)(\bbR^3)$, we have 
    \begin{equation}\label{eq: energy bound}
        |E[\xi]|\leq E[|\xi|]\lesssim      \left(\nrm{r^2\xi}_1+\nrm{\xi}_{L^1\cap L^2}\right)\nrm{r^2\xi}_1^{1/2}\nrm{\xi}_1^{1/2},
    \end{equation}
    \begin{equation}\label{eq: interaction energy bound}
        \left|\int_\Pi\int_\Pi
        G(r,z,r',z')\xi_1(r,z)\xi_2(r',z') rr' dr'dz'drdz
        \right|\lesssim
        \left(\nrm{r^2\xi_1}_1+\nrm{\xi_1}_{L^1\cap L^2}\right)\nrm{r^2\xi_2}_1^{1/2}\nrm{\xi_2}_1^{1/2},
    \end{equation}
    \begin{equation}\label{eq: energy difference}
        |E[\xi_1]-E[\xi_2]|\lesssim 
        \left(\nrm{r^2(\xi_1+\xi_2)}_1+\nrm{\xi_1+\xi_2}_{L^1\cap L^2}\right)\nrm{r^2(\xi_1-\xi_2)}_1^{1/2}\nrm{\xi_1-\xi_2}_1^{1/2}.
    \end{equation}
\end{lem}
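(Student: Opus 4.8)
\medskip

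\noindent\emph{Proof strategy.} The plan is to prove the interaction estimate \eqref{eq: interaction energy bound} first and deduce the other two from it. Taking $\xi_1=\xi_2=\abs{\xi}$ in \eqref{eq: interaction energy bound} and using the positivity $G>0$ recorded in \eqref{eq: Green function formula}, one gets $\abs{E[\xi]}\leq E[\abs{\xi}]=\pi\iint_{\Pi\times\Pi}G\,rr'\abs{\xi}\abs{\xi}$, which is exactly \eqref{eq: energy bound}. Likewise, bilinearity and symmetry of $\calE$ give $E[\xi_1]-E[\xi_2]=\calE[\xi_1+\xi_2,\,\xi_1-\xi_2]$, and applying \eqref{eq: interaction energy bound} to the pair $(\xi_1+\xi_2,\,\xi_1-\xi_2)$ yields \eqref{eq: energy difference}. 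So it remains to establish \eqref{eq: interaction energy bound}, and there, invoking $G>0$ once more, one may assume $\xi_1,\xi_2\geq0$.

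\medskip

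\noindent The heart of the matter is a pointwise bound on $G$. Writing $\rho:=\abs{(r,z)-(r',z')}$, I would first prove, by elementary estimation of \eqref{eq: Green function formula} — substituting $\vartheta\mapsto 2u$ so that $1-\cos\vartheta=2\sin^2 u$, splitting the integral into $u$ near $0$ and $u$ away from $0$, and using the cancellation $\ii{0}{\pi}\cos\vartheta\,\dd\vartheta=0$ to extract the far-field decay — the estimate
\begin{equation}\label{eq: G bound plan}
0<G(r,z,r',z')\lesssim
\begin{cases}
(rr')^{1/2}\!\left(1+\log^+\dfrac{\sqrt{rr'}}{\rho}\right),& \rho\leq\sqrt{rr'},\\[2.2ex]
\dfrac{(rr')^{2}}{\rho^{3}},& \rho>\sqrt{rr'}.
\end{cases}
\end{equation}
Both features of \eqref{eq: G bound plan} are needed below: the logarithmic (hence locally integrable) blow-up on the diagonal is the usual two-dimensional Green's-function behavior, while the cubic decay at infinity is the gain coming from the sign change of $\cos\vartheta$. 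One then inserts \eqref{eq: G bound plan} into $\calE[\xi_1,\xi_2]=\pi\iint_{\Pi\times\Pi}G\,rr'\xi_1\xi_2$ and splits $\Pi\times\Pi$ into the near region $\set{\rho\leq\sqrt{rr'}}$, on which $r\simeq r'$, and the far region $\set{\rho>\sqrt{rr'}}$. On each region one applies H\"older's inequality together with a Schur-type decomposition (dyadic in the radial variable $r$), distributing the weights $r$ and $r'$ between the two vortices so that one factor is controlled by $\nrm{r^{2}\xi_{2}}_{1}^{1/2}\nrm{\xi_{2}}_{1}^{1/2}$ and the other by $\nrm{r^{2}\xi_{1}}_{1}+\nrm{\xi_{1}}_{L^{1}}+\nrm{\xi_{1}}_{L^{2}}$; the logarithmic factor is absorbed using that $\log^{+}(\sqrt{rr'}/\rho)$ is square-integrable in $\rho$ over $\set{\rho\lesssim\sqrt{rr'}}$ with $L^{2}$-norm $\lesssim\sqrt{rr'}$, which is precisely why the $L^{2}$-norm of the vortices enters.

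\medskip

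\noindent The main obstacle, I expect, is exactly this last bookkeeping step. Neither half of \eqref{eq: G bound plan} suffices on its own: the near-diagonal bound used crudely forces spurious higher moments such as $\int r^{7/2}\abs{\xi_1}$ that are \emph{not} controlled by $\nrm{r^{2}\xi_1}_1$ and $\nrm{\xi_1}_1$, while the far-field bound alone is too singular on the diagonal. Thus the two regimes, together with the dyadic-in-$r$ argument, must be coordinated carefully so that the exponents collapse to $\nrm{r^{2}\xi_{2}}_{1}^{1/2}\nrm{\xi_{2}}_{1}^{1/2}$. The borderline logarithmic singularity of $G$ is the secondary delicate point, handled through the $L^{2}$ endpoint (Cauchy--Schwarz against the square-integrable log) rather than the $L^{1}$ one.
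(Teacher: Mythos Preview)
The paper does not prove this lemma; it is quoted from \cite[Lemma~2.3]{Choi2024} and used as a black box, so there is no in-paper argument to compare your proposal against. On its own merits your outline is sound: the reductions of \eqref{eq: energy bound} and \eqref{eq: energy difference} to the bilinear estimate \eqref{eq: interaction energy bound} via $G>0$ and the polarization identity $E[\xi_1]-E[\xi_2]=\calE[\xi_1+\xi_2,\xi_1-\xi_2]$ are correct, and the two-regime pointwise bound you state for $G$ (near-diagonal logarithm, far-field cubic decay coming from the cancellation $\int_0^\pi\cos\vartheta\,\dd\vartheta=0$) is the standard one. You also correctly identify the final integration as the only substantive step and correctly isolate why the $L^2$ norm must enter. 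Your description of that step as a dyadic-in-$r$ Schur decomposition is plausible but vague; a cleaner organization, and the one typically used in the literature, is to integrate out one variable first to obtain a pointwise bound on the stream function $\calG[\xi_1]$ and then pair against $\xi_2$ using the Cauchy--Schwarz step $\int_{\bbR^3} r\,|\xi_2|\,\dd\bfx\leq\nrm{r^2\xi_2}_1^{1/2}\nrm{\xi_2}_1^{1/2}$, which immediately explains the specific shape of the right-hand side of \eqref{eq: interaction energy bound} and bypasses the dyadic bookkeeping you flag as the main obstacle.
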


\begin{thm}\cite[Theorem 3.1]{Choi2024}\label{thm: compactness of maximizer set}
    Let $0<\mu,\nu,\lmb<\ift$. Let $\set{\xi_n}_{n=1}^\ift$ be a sequence of non-negative axisymmetric functions in $\bbR^3$ and let $\set{a_n}_{n=1}^\ift$ be a sequence of positive numbers such that 
    $$a_n\to0\qd\mbox{as}\q n\to\ift,$$
    $$\limsup_{n\to\ift}\nrm{\xi_n}_1\leq\nu,\qd
    \lim_{n\to\ift}\int_{\set{\bfx\in\bbR^3:|\xi_n(\bfx)-\lmb|\geq a_n}}\xi_n\,\dd\bfx=0,\qd
    \lim_{n\to\ift}\f{1}{2}\nrm{r^2\xi_n}_1=\mu,$$
    $$\sup_n\nrm{\xi_n}_2<\ift,\qd \mbox{and}\qd \lim_{n\to\ift}E[\xi_n]=\calI_{\mu,\nu,\lmb}.$$ Then there exist a subsequence $\set{\xi_{n_k}}_{k=1}^\ift,$ a sequence $\set{c_k}_{k=1}^\ift\subset \bbR$, and a function $\xi\in \calS_{\mu,\nu,\lmb}$ such that 
    $$\nrm{r^2(\xi_{n_k}(\cdot+c_k\bfe_z)-\xi)}_1\to0\qd\mbox{as}\q k\to\ift.$$
\end{thm}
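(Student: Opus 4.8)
The plan is a concentration--compactness argument in the style of Lions, applied to the impulse density. I would set $\rho_n:=r^2\xi_n=(x_1^2+x_2^2)\xi_n\ge0$, a sequence of finite nonnegative measures on $\bbR^3$ with total mass $\nrm{\rho_n}_{L^1}=\nrm{r^2\xi_n}_1\to 2\mu$, and --- since axisymmetry together with the weight $r^2$ leaves only translations in $x_3$ available --- run the trichotomy with respect to $x_3$-translations: after passing to a subsequence, put $\alpha:=\lim_{R\to\ift}\liminf_n\sup_{c\in\bbR}\int_{\set{|x_3-c|<R}}\rho_n\,\dd\bfx\in[0,2\mu]$ and distinguish the cases $\alpha=0$ (vanishing), $\alpha=2\mu$ (compactness), $0<\alpha<2\mu$ (dichotomy). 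Two soft facts will be used throughout. First, the $L^1$-mass at large and small radius is uniformly small, since $\int_{\set{r>M}}\xi_n\le M^{-2}\nrm{r^2\xi_n}_1$ and $\int_{\set{r<\dlt}}r^2\xi_n\le\dlt^2\nrm{\xi_n}_1\le\dlt^2\nu$. Second, the distribution hypothesis $\int_{\set{|\xi_n-\lmb|\ge a_n}}\xi_n\to0$ together with $\xi_n\ge0$ lets me replace $\xi_n$ by $\min\set{\xi_n,\lmb}$ at the cost of $o(1)$ in $\nrm{\cdot}_1$, in $\nrm{r^2\cdot}_1$, and --- via \eqref{eq: energy difference} --- in $E[\cdot]$; so I may assume $0\le\xi_n\le\lmb$.

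To rule out vanishing I would cut $\bbR^3$ into $x_3$-slabs $S_k$ of width $R$ and expand $E[\xi_n]=\sum_{k,\ell}\calE[\xi_n\bfone_{S_k},\xi_n\bfone_{S_\ell}]$: the diagonal terms are bounded by a positive power of $\sup_k\nrm{r^2\xi_n\bfone_{S_k}}_1\to0$ by \eqref{eq: energy bound}, while the off-diagonal terms are summably small because $G(r,z,r',z')$ decays cubically in $|z-z'|$ (after first restricting to $r,r'\le M$ using the previous paragraph). Hence $E[\xi_n]\to0$, contradicting $E[\xi_n]\to\calI_{\mu,\nu,\lmb}\in(0,\ift)$ from Theorem~\ref{thm: larger admissible sets}. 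The same estimate applied to the single region $\set{r>M_n}$ with $M_n\to\ift$ shows that any impulse escaping to $r=\ift$ carries $o(1)$ energy (its $L^1$-mass is $\le 2\mu M_n^{-2}\to0$, so \eqref{eq: energy bound} applies); I will reuse this below to handle the radial tail.

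Ruling out dichotomy is the crux. A standard cut-off produces, after an $x_3$-translation, $\xi_n=\xi_n^{(1)}+\xi_n^{(2)}+\omg_n$ with $\nrm{\omg_n}_{L^1\cap L^1_w}\to0$, with $\supp\xi_n^{(1)}$ and $\supp\xi_n^{(2)}$ separated in $x_3$ by a gap tending to $\ift$, and with $\f{1}{2}\nrm{r^2\xi_n^{(i)}}_1\to\mu_i$, $\limsup_n\nrm{\xi_n^{(i)}}_1=:\nu_i$, where $\mu_1+\mu_2=\mu$, $0<\mu_i<\mu$, $\nu_1+\nu_2\le\nu$. The cross energy $\calE[\xi_n^{(1)},\xi_n^{(2)}]\to0$ by the decay of $G$ and \eqref{eq: interaction energy bound}, and after truncating each piece to $\le\lmb$ one obtains $\limsup_n E[\xi_n^{(i)}]\le\calI''_{\mu_i,\nu_i,\lmb}=\calI_{\mu_i,\nu_i,\lmb}$ by Theorem~\ref{thm: larger admissible sets}; passing to the limit in $E[\xi_n]=E[\xi_n^{(1)}]+E[\xi_n^{(2)}]+2\calE[\xi_n^{(1)},\xi_n^{(2)}]+o(1)$ then gives $\calI_{\mu,\nu,\lmb}\le\calI_{\mu_1,\nu_1,\lmb}+\calI_{\mu_2,\nu_2,\lmb}$. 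To contradict this I need the \emph{strict superadditivity} $\calI_{\mu,\nu,\lmb}>\calI_{\mu_1,\nu_1,\lmb}+\calI_{\mu_2,\nu_2,\lmb}$ whenever $\mu_1+\mu_2=\mu$, $\nu_1+\nu_2\le\nu$, $\mu_i>0$, which I would extract from scaling: the dilation $\xi\mapsto\xi(\cdot/a)$ multiplies $E$ by $a^7$, the impulse by $a^5$ and $\nrm{\cdot}_1$ by $a^3$, so Theorem~\ref{thm: larger admissible sets} yields the exact identity $\calI_{a^5\mu,\,a^3\nu,\,\lmb}=a^7\calI_{\mu,\nu,\lmb}$; combined with monotonicity of $\calI$ in $\nu$ and with the fact that the budget $\nu$ is inactive at near-maximizers in the parameter range at hand (cf.\ Remark~\ref{rmk_energy is scaling}, where $\calI_{\mu,\nu,\lmb}\propto\lmb^{3/5}\mu^{7/5}$), this upgrades to strict super-homogeneity of degree $7/5>1$ in the impulse, whence superadditivity by the familiar computation $\calI_{\mu}=\calI_{(\mu/\mu_1)\mu_1}>(\mu/\mu_1)\calI_{\mu_1}\ge\calI_{\mu_1}+\calI_{\mu_2}$ (and symmetrically). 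A byproduct is the strict monotonicity $\calI_{\mu',\nu,\lmb}<\calI_{\mu,\nu,\lmb}$ for $\mu'<\mu$. I expect this parameter bookkeeping --- in particular reconciling the fixed budget $\nu$ with the splitting --- to be the one genuinely delicate point.

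In the surviving case $\alpha=2\mu$ there are $c_n\in\bbR$ so that, relabeling $\xi_n:=\xi_n(\cdot+c_n\bfe_z)$, the measures $r^2\xi_n\,\dd\bfx$ are tight in $x_3$; together with the radial tail estimates --- small $r$ is harmless, and a slice persistently escaping to $r=\ift$ would carry $o(1)$ energy, so removing it would leave a near-maximizing sequence for $\calI_{\mu-\eps_0,\nu,\lmb}<\calI_{\mu,\nu,\lmb}$, against strict monotonicity --- the measures $r^2\xi_n\,\dd\bfx$ are then tight on all of $\bbR^3$. I would extract $\xi_n\weakto\xi$ weakly in $L^2(\bbR^3)$, so that $r^2\xi_n\,\dd\bfx\weakto r^2\xi\,\dd\bfx$ weakly-$*$ and tightness gives $\f{1}{2}\nrm{r^2\xi}_1=\mu$, $\nrm{\xi}_1\le\nu$, $\nrm{\xi}_2<\ift$, $0\le\xi\le\lmb$, i.e.\ $\xi\in\calP''_{\mu,\nu,\lmb}$. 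Next $E[\xi_n]\to E[\xi]$: in $E[\xi_n]=\f{1}{2}\int\xi_n\calG[\xi_n]\,\dd\bfx$ the smoothing and decay of $G$ plus tightness give $\calG[\xi_n]\to\calG[\xi]$ locally uniformly on $\set{r>0}$ with uniformly small tails, and pairing this strong convergence against the weakly convergent $\xi_n$ closes the argument; hence $E[\xi]=\calI_{\mu,\nu,\lmb}=\calI''_{\mu,\nu,\lmb}$, so $\xi\in\calS''_{\mu,\nu,\lmb}=\calS_{\mu,\nu,\lmb}$ by Theorem~\ref{thm: larger admissible sets}, and in particular $\xi=\lmb\bfone_A$ for some axisymmetric $A$. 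Finally, since $\xi/\lmb=\bfone_A$ is an indicator and $\xi_n/\lmb$ agrees with an indicator $\bfone_{A_n}$ up to $o(1)$ in $\nrm{\cdot}_1$ and $\nrm{r^2\cdot}_1$, the classical fact that weak $L^2$-convergence of indicators to an indicator forces $L^1$-convergence on bounded sets (via $\nrm{\bfone_{A_n}}_{L^2(K)}^2=|A_n\cap K|\to|A\cap K|$) gives $\bfone_{A_n}\to\bfone_A$ in $L^1_{\mathrm{loc}}(\set{r>0})$, hence $r^2\xi_n\to r^2\xi$ in $L^1_{\mathrm{loc}}(\bbR^3)$; combined with the global tightness this upgrades to $\nrm{r^2(\xi_n-\xi)}_1\to0$, which is exactly the assertion, with $\set{c_k}$ the chosen shifts and $\xi\in\calS_{\mu,\nu,\lmb}$.
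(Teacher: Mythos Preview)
The paper does not contain a proof of this statement: Theorem~\ref{thm: compactness of maximizer set} is quoted verbatim from \cite[Theorem~3.1]{Choi2024} as a preliminary, with no argument given here. So there is nothing in the present paper to compare your proposal against.

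That said, your outline is the expected concentration--compactness scheme and is almost certainly what \cite{Choi2024} does. One point to tighten: your strict superadditivity step leans on Remark~\ref{rmk_energy is scaling}, i.e.\ on $\calI_{\mu,\nu,\lmb}\propto\lmb^{3/5}\mu^{7/5}$, but that formula is stated only under the constraint $\mu\nu^{-5/3}\lmb^{2/3}\le\calM_0$, whereas Theorem~\ref{thm: compactness of maximizer set} is asserted for all $0<\mu,\nu,\lmb<\ift$. The scaling identity $\calI_{a^5\mu,\,a^3\nu,\,\lmb}=a^7\calI_{\mu,\nu,\lmb}$ is unconditional, but turning it into strict superadditivity in $\mu$ with a \emph{fixed} $\nu$-budget requires knowing that the $L^1$-constraint is not saturated (or handling the saturated case separately). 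You flag exactly this bookkeeping as the delicate point; in \cite{Choi2024} the strict subadditivity is established directly for the full parameter range, so if you want a self-contained proof you should supply that lemma rather than invoke Remark~\ref{rmk_energy is scaling}.
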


\section{Stability: proof of Theorem~\ref{thm: main result with no shift estimate}}\label{sec: stability}

\subsection{Scaled version of Theorem~\ref{thm: main result with no shift estimate}}\label{subsec: Scaled version of orbital stability}\q

Theorem~\ref{thm: main result of mu with no shift estimate} below is a scaled version of Theorem~\ref{thm: main result with no shift estimate}. We will first prove Theorem~\ref{thm: main result of mu with no shift estimate} by using Theorem~\ref{thm: uniqueness of maximizer}, and after that, we will prove Theorem~\ref{thm: main result with no shift estimate} based on Theorem~\ref{thm: main result of mu with no shift estimate} without difficulty in Subsection~\ref{subsec: proof of main result with no shift estimate}. We note that Theorem~\ref{thm: uniqueness of maximizer} characterizes the Hill's spherical vortex as the unique maximizer of the aforementioned variational problem: for any small $\mu>0$,
any maximizer in the set of maximizers $\calS_{\mu,1,1}$ is a $z$-translation of \textit{scaled} Hill's spherical vortex $\xi_{H(1,\kpp_\mu)}$ where the radius $\kpp_\mu$ is determined by $\f{4\pi}{15}\kpp_\mu^5=\mu,$ so that we get $\f{1}{2}\nrm{r^2\xi_H^\mu}_1=\mu.$

For given $\mu>0$, we denote
$$\xi_H^\mu:=\xi_{H(1,\kpp_\mu)},\qd\mbox{and}\qd \nu_\mu:=\nrm{\xi_H^\mu}_1=\f{4\pi}{3}\kpp_\mu^3$$  Theorem~\ref{thm: main result of mu with no shift estimate} below establishes the stability of the scaled Hill's vortex $\xi_H^\mu$ for small $\mu>0$.  

\begin{thm}\label{thm: main result of mu with no shift estimate}
    There exists a constant $\mu_0>0$ such that, for any $\mu\in(0,\mu_0)$ and for each $\eps>0$, there exist $\dlt'=\dlt'(\mu,\eps)>0$ and $d_0'=d_0'(\mu,\eps)>\kpp_\mu$ satisfying the following: \\
    
      For any $d\geq d_0'$ and for any axisymmetric initial data $\xi_0\in \left(L^1\cap L^\ift\cap L^1_w\right)(\bbR^3)$ with $r\xi_0\in L^\ift(\bbR^3)$ such that
     $$\xi_0(r,z)=-\xi_0(r,-z)\geq0\qd\mbox{for}\q r>0,\,z\geq0$$ and 
    $$\nrm{\+{\xi_0}-\xi^\mu_H\left(\cdot-d\bfe_z\right)}_{\left(L^1\cap L^2\cap L^1_w\right)(\bbR^3)}<\dlt',$$ the axisymmetric weak solution $\xi(t)$ of \eqref{eq: axi no swirl Euler} satisfies 
    $$\inf_{\tau\geq\kpp_\mu}\nrm{\+{\xi(t)}-\xi_H^\mu\left(\cdot-\tau\bfe_z\right)}_{\left(L^1\cap L^2\cap L^1_w\right)(\bbR^3)}<\eps\qd\mbox{for each}\q t\geq0.$$
\end{thm}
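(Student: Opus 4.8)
The plan is a compactness--contradiction argument driven by the energy identity for the odd decomposition together with the monotonicity of the impulse. Suppose the conclusion fails: there exist $\eps>0$, $\mu\in(0,\mu_0)$, sequences $\dlt_n'\downarrow0$, $d_n\uparrow\ift$, initial data $\xi_{0,n}$ satisfying the hypotheses with $(\dlt',d)=(\dlt_n',d_n)$, and times $t_n\ge0$ with
$$\inf_{\tau\ge\kpp_\mu}\nrm{\+{\xi_n(t_n)}-\xi_H^\mu(\cdot-\tau\bfe_z)}_{L^1\cap L^2\cap L^1_w}\ge\eps,$$
where $\xi_n$ is the solution, $\+{\xi_n}:=\xi_n\big|_{z>0}$, $\xi_n^-:=-\xi_n\big|_{z<0}$, and I abbreviate $E_{inter,n}(t):=\calE[\+{\xi_n(t)},\xi_n^-(t)]$ and $\mu_n(t):=\f12\nrm{r^2\+{\xi_n(t)}}_1$. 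Recall that $\nrm{\xi_n(t)}_1,\nrm{\xi_n(t)}_2,\nrm{\xi_n(t)}_\ift$, the distribution of $\+{\xi_n(t)}$ on $\bbR^3_+$ (hence $\int_{\bbR^3_+}F(\+{\xi_n(t)})\,\dd\bfx$ for $F(0)=0$), and $E[\xi_n(t)]=2E[\+{\xi_n(t)}]-2E_{inter,n}(t)$ are conserved, that $E_{inter,n}(t)\ge0$ by odd symmetry and positivity of the kernel $G$, and that $\mu_n(t)$ is strictly decreasing by Lemma~\ref{lem: monoticity of impulse}. Since $\calE[\xi_H^\mu(\cdot-d_n\bfe_z),\xi_H^\mu(\cdot+d_n\bfe_z)]\to0$ as $d_n\to\ift$ and $\nrm{\+{\xi_{0,n}}-\xi_H^\mu(\cdot-d_n\bfe_z)}_{L^1\cap L^2\cap L^1_w}<\dlt_n'$, the estimates \eqref{eq: interaction energy bound}--\eqref{eq: energy difference} give $\tfrac12E[\xi_n(0)]=E[\xi_H^\mu]+o(1)$, $0\le E_{inter,n}(0)=o(1)$, and $\mu_n(0)=\mu+o(1)$.

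The core step is to propagate $E_{inter,n}(t)=o(1)$ to all $t\ge0$. Split $\+{\xi_n(t)}=\eta_n(t)+\rho_n(t)$ with $\eta_n:=\min(\+{\xi_n(t)},1)$ and $\rho_n:=\+{\xi_n(t)}-\eta_n$. Because $\xi_H^\mu\le1$, at $t=0$ one has $\rho_n(0)\le|\+{\xi_{0,n}}-\xi_H^\mu(\cdot-d_n\bfe_z)|$ pointwise, so $\nrm{\rho_n(0)}_{L^1\cap L^2\cap L^1_w}<\dlt_n'$; moreover $\nrm{\rho_n(t)}_1$ and $\nrm{\rho_n(t)}_2$ are conserved (they are of the form $\int_{\bbR^3_+}F(\+{\xi_n(t)})$ with $F(0)=0$), while $\nrm{r^2\rho_n(t)}_1\le\nrm{r^2\+{\xi_n(t)}}_1=2\mu_n(t)=O(1)$. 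Hence by \eqref{eq: energy bound}--\eqref{eq: energy difference}, $|E[\+{\xi_n(t)}]-E[\eta_n(t)]|=O(\sqrt{\dlt_n'})$ \emph{uniformly in $t$}. Now $0\le\eta_n(t)\le1$ with $\nrm{\eta_n(t)}_1\le\nrm{\+{\xi_{0,n}}}_1\le\nu_\mu+\dlt_n'$ and, using the monotonicity $\mu_n(t)\le\mu_n(0)$, also $\tfrac12\nrm{r^2\eta_n(t)}_1\le\mu_n(0)$; thus $\eta_n(t)\in\calP''_{\mu_n(0),\nu,1}$ for a fixed $\nu$ chosen large enough that $\nu\ge\nu_\mu+1$ and $\mu_n(0)\nu^{-5/3}\le\calM_0$. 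By Theorem~\ref{thm: larger admissible sets} and Remark~\ref{rmk_energy is scaling}, $E[\eta_n(t)]\le\calI_{\mu_n(0),\nu,1}=\bigl(\tfrac{15}{4\pi}\bigr)^{7/5}E[\xi_H]\,\mu_n(0)^{7/5}=E[\xi_H^\mu]+o(1)$. Combining, $E[\+{\xi_n(t)}]\le E[\xi_H^\mu]+o(1)$ uniformly in $t$; since $E[\+{\xi_n(t)}]=\tfrac12E[\xi_n(0)]+E_{inter,n}(t)=E[\xi_H^\mu]+E_{inter,n}(t)+o(1)$ and $E_{inter,n}(t)\ge0$, we conclude $0\le E_{inter,n}(t)\le o(1)$ and $E[\+{\xi_n(t)}]=E[\xi_H^\mu]+o(1)$, uniformly in $t$. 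A short monotonicity argument (if $\mu_n(t)\le\mu_n(0)-c_0$ then $E[\eta_n(t)]\le\calI_{\mu_n(t),\nu,1}\le E[\xi_H^\mu]-c'c_0$, contradicting the last display) upgrades this to $\mu_n(t)=\mu+o(1)$ uniformly in $t$.

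It remains to run compactness at the bad time $t_n$. The sequence $\eta_n(t_n)$ is nonnegative axisymmetric, satisfies $\limsup_n\nrm{\eta_n(t_n)}_1\le\nu$, $\tfrac12\nrm{r^2\eta_n(t_n)}_1\to\mu$, $\sup_n\nrm{\eta_n(t_n)}_2<\ift$, and $E[\eta_n(t_n)]\to\calI_{\mu,\nu,1}$; moreover $\int_{\{|\eta_n(t_n)-1|\ge a_n\}}\eta_n(t_n)\,\dd\bfx=\int_{\{0<\+{\xi_n(t_n)}<1-a_n\}}\+{\xi_n(t_n)}\,\dd\bfx\to0$ for $a_n:=\sqrt{\dlt_n'}$, by distribution conservation and the $L^1$-closeness of $\+{\xi_{0,n}}$ to $\xi_H^\mu(\cdot-d_n\bfe_z)$. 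Hence Theorem~\ref{thm: compactness of maximizer set} yields a subsequence and shifts $c_k\in\bbR$ with $\eta_{n_k}(t_{n_k})(\cdot+c_k\bfe_z)\to\xi$ in $L^1_w$ for some $\xi\in\calS_{\mu,\nu,1}$, which by Theorem~\ref{thm: uniqueness of maximizer} is a $z$-translate of $\xi_H^\mu$; absorbing that translation into $c_k$ we may take $\xi=\xi_H^\mu$. Since $\nrm{r^2\rho_{n_k}(t_{n_k})}_1=2\mu_{n_k}(t_{n_k})-\nrm{r^2\eta_{n_k}(t_{n_k})}_1\to2\mu-2\mu=0$ and $\nrm{\rho_{n_k}(t_{n_k})}_{L^1\cap L^2}\to0$, and since $\eta_{n_k}\le1$ together with $\nrm{\eta_{n_k}(t_{n_k})(\cdot+c_k\bfe_z)}_1\to\nrm{\xi_H^\mu}_1$ upgrades the $L^1_w$-convergence to convergence in $L^1\cap L^2\cap L^1_w$ (a.e. convergence along a further subsequence, then Scheff\'e in $L^1$, then the pointwise bound in $L^2$), we obtain $\+{\xi_{n_k}(t_{n_k})}(\cdot+c_k\bfe_z)\to\xi_H^\mu$ in $L^1\cap L^2\cap L^1_w$. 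As $\+{\xi_{n_k}(t_{n_k})}$ is supported in $\{z>0\}$, the translate $\xi_H^\mu(\cdot+c_k\bfe_z)$ must have vanishing mass in $\{z\le0\}$, forcing $-c_k\ge\kpp_\mu-o(1)$; replacing $-c_k$ by $\max(-c_k,\kpp_\mu)$ changes the relevant norm only by $o(1)$, so $\inf_{\tau\ge\kpp_\mu}\nrm{\+{\xi_{n_k}(t_{n_k})}-\xi_H^\mu(\cdot-\tau\bfe_z)}_{L^1\cap L^2\cap L^1_w}\to0$, contradicting the standing assumption. This proves Theorem~\ref{thm: main result of mu with no shift estimate}.

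The main obstacle is the second paragraph: the energy $E[\+{\xi_n(t)}]$ of a single vortex is \emph{not} conserved, so showing it stays below $E[\xi_H^\mu]+o(1)$ for all time forces one to genuinely combine the strictly decreasing impulse $\mu_n(t)$ with the sharp variational characterization (Theorems~\ref{thm: larger admissible sets}--\ref{thm: uniqueness of maximizer}) to rule out any energy overshoot, while at the same time handling the possibly unbounded $\nrm{\xi_{0,n}}_\ift$ via the $\eta_n/\rho_n$ splitting and keeping every error estimate independent of $t$. Everything else --- the interaction-decay input, the compactness extraction, the norm upgrades, and the final reduction to Theorem~\ref{thm: main result with no shift estimate} --- is comparatively routine.
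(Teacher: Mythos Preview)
Your argument is correct and follows the same contradiction/energy-identity/impulse-monotonicity/truncation/compactness scheme as the paper's own proof. The only differences are cosmetic: you feed $\eta_n=\min(\+{\xi_n(t_n)},1)$ into Theorem~\ref{thm: compactness of maximizer set} (the paper feeds $\+{\xi_n(t_n)}$ itself), you take $\nu$ large rather than $\nu=1$, and you upgrade to $L^1\cap L^2$ via Scheff\'e and the pointwise bound $0\le\eta_n\le1$ rather than via a weak-$L^2$ limit identification as the paper does.
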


\begin{proof}[Proof of Theorem~\ref{thm: main result of mu with no shift estimate}]
    We put $\mu_0:=\calM_0$ for the constant $\calM_0>0$ given in Theorem~\ref{thm: uniqueness of maximizer}. Then we suppose the contrary, i.e., there exist $\mu\in(0,\calM_0)$ and $\eps>0$ such that, 
    for any sequences $\dlt_n\searrow0$ and $d_{0,n}\nearrow\ift$, there exist a sequence $d_n\geq d_{0,n}$, a sequence of axisymmetric solutions $\xi_n(t)$, and a sequence $t_n\geq0$ such that 
    $$\nrm{\+{\xi_n(0)}-\xi^\mu_H\left(\cdot-d_n \bfe_z\right)}_{\left(L^1\cap L^2\cap L^1_w\right)(\bbR^3)}<\dlt_n
    \qd\mbox{and}\qd
    \inf_{\tau\geq\kpp_\mu}\nrm{\+{\xi_n(t_n)}-\xi_H^\mu\left(\cdot-\tau\bfe_z\right)}_{\left(L^1\cap L^2\cap L^1_w\right)(\bbR^3)}\geq\eps.$$ 
    We note that the inequality $\mu<\calM_0$ implies $\xi_H^\mu\in \calS_{\mu,1,1}$ by Theorem~\ref{thm: uniqueness of maximizer}, which says that $\nu_\mu<1$. \\
    
    \noindent\textbf{[1] Properties of initial data 
    $\xi_n(0)$ :}
    \begin{equation}\label{eq_0210_1}
        \qd (i)\, \f{1}{2}\nrm{r^2\+{\xi_n(0)}}_1\to\mu,\qd
    (ii)\,\nrm{\+{\xi_n(0)}}_1\to\nu_\mu,\qd\mbox{and}
    \qd(iii)\, E\left[\+{\xi_n(0)}\right]\to \calI_{\mu,1,1}.
    \end{equation}\
    From the assumption on $\+{\xi_n(0)}$, we directly obtain \textit{(i)} and \textit{(ii)}. Moreover, we get \textit{(iii)}: $$E\left[\brk{\xi_n(0)}_+\right]\to E\left[\xi^\mu_H\right]=\calI_{\mu,1,1}\qd\mbox{as}\q n\to\ift$$
due to the estimate \eqref{eq: energy difference} in Lemma~\ref{lem: energy estimates}. We proved \textit{(i)}, \textit{(ii)}, and \textit{(iii)}. \\

    \noindent\textbf{[2] Properties of $\xi_n(t_n)$ :}
    \begin{equation}\label{eq_0210_2}
        \qd (iv)\,\f{1}{2}\nrm{r^2\+{\xi_n(t_n)}}_1\to\mu,\qd
    (v)\,\nrm{\+{\xi_n(t_n)}}_1\to\nu_\mu,\qd\mbox{and}
    \qd(vi)\, E\left[\+{\xi_n(t_n)}\right]\to \calI_{\mu,1,1}.
    \end{equation}

    We obtain \textit{(v)} from \textit{(ii)} in \eqref{eq_0210_1}
    by the conservation of $L^1$ norm, i.e.,
    $$\nrm{\+{\xi_n(t_n)}}_1=\f{1}{2}\nrm{\xi_n(t_n)}_1
    =\f{1}{2}\nrm{\xi_n(0)}_1=
    \nrm{\+{\xi_n(0)}}_1\to\nu_\mu\qd\mbox{as}\q n\to\ift.$$
    Before the proof of \textit{(iv)} and \textit{(vi)}, we explore some properties of kinetic energy $E[\cdot]$:
\begin{itemize}
        \item The total kinetic energy $E[\xi_n(t)]$ is constant in $t\geq0$, i.e.,
    $$\begin{aligned}
        E[\xi_n(t)]=\f{1}{2}\int_{\bbR^3}
        \calG[\xi_n(t)]\xi_n(t)\,\dd\bfx=
        E[\xi_n(0)]\qd\mbox{for all }t\geq0.
    \end{aligned}$$
    \item The total kinetic energy $E[\xi_n(t)]$ has the relation 
    $$E[\xi_n(t)]=2E[\+{\xi_n(t)}]-2E^{inter}_n(t)$$ where the interaction energy $E^{inter}_n$ is given as 
     $$E^{inter}_n(t):=\f{1}{2}\int_{\bbR^3}\calG[\+{\xi_n(t)}]\+{-\xi_n(t)}\,\dd\bfx>0.$$ The above relation follows from the odd symmetry of the solution $\xi_n(t)$ and the property of the Green's function that $G(\bfx,\bfy)=G(\bfy,\bfx)$. Indeed, splitting the solution as 
    $$\xi_n(t)=\+{\xi_n(t)}-\+{-\xi_n(t)},$$
we get the above relation as 
    $$\begin{aligned}
        E[\xi_n(t)]=\f{1}{2}\int_{\bbR^3}
        \calG[\xi_n(t)]\xi_n(t)\,\dd\bfx
        \,&=\,
        \int_{\bbR^3}
        \calG[\+{\xi_n(t)}]
        \+{\xi_n(t)}\,\dd\bfx-
        \int_{\bbR^3}
        \calG[\+{\xi_n(t)}]
        \+{-\xi_n(t)}\,\dd\bfx\\
        &=\, 2E[\+{\xi_n(t)}]-2E^{inter}_n(t).\q 
    \end{aligned}$$ 
    \end{itemize}\noindent    
    We will now prove \textit{(vi)} of \eqref{eq_0210_2} by the observations above (later we will see that \textit{(iv)} of \eqref{eq_0210_2} will be deduced from several results that will be obtained during the proof of \textit{(vi)}). To prove \textit{(vi)}: $$E\left[\+{\xi_n(t_n)}\right]\to \calI_{\mu,1,1}\qd\mbox{as}\q n\to\ift,$$ we observe that
 $$    \begin{aligned}
        |\calI_{\mu,1,1}-E\left[\+{\xi_n(t_n)}\right]|\,&\leq\, \left|\calI_{\mu,1,1}-E\left[\+{\xi_n(0)}\right]\right|+
    \left|E\left[\+{\xi_n(0)}\right]-E\left[\+{\xi_n(t_n)}\right]\right|\\
    &=\,
    \left|\calI_{\mu,1,1}-E\left[\+{\xi_n(0)}\right]\right|+\left|E^{inter}_n(t_n)-E^{inter}_n(0)\right|\\
    &\leq\, \underbrace{\left|\calI_{\mu,1,1}-E\left[\+{\xi_n(0)}\right]\right|}_{\to\q0\q\q\mbox{by }(iii)\q in \q \eqref{eq_0210_1}}+E^{inter}_n(t_n)+E^{inter}_n(0)
    \end{aligned}
    $$ which implies that it suffices to show 
$$E^{inter}_n(0),\,E^{inter}_n(t_n)\to0\qd\mbox{as}\q n\to\ift.$$ We first observe that  
    $E^{inter}_n(0)\to0\qd\mbox{as}\q n\to\ift$ 
    by the assumptions on $\xi_n(0)$ and $d_n$. Indeed, from \eqref{eq: interaction energy bound} in Lemma~\ref{lem: energy estimates}, we obtain $$\lim_{n\to\ift}\left|E^{inter}_n(0)-\f{1}{2}\int_{\bbR^3}\calG[\xi_H(\cdot-d_n\bfe_z)]\xi_H(\bfx+d_n\bfe_z)\,\dd\bfx\right|=0,$$ where    
    $$\left|\int_{\bbR^3}\calG[\xi_H(\cdot-d_n\bfe_z)]\xi_H(\bfx+d_n\bfe_z)\,\dd\bfx\right| = 
    \calO(d_n^{-1})\q\to0 \qd\mbox{as}\q n\to\ift    
    $$ based on the obvious observation of the Green's function $G$:  
    $$|G(r,z,r',z')|\lesssim \f{rr'}{|z-z'|}$$ (see \eqref{eq: Green function formula}).
    For $E^{inter}_n(t_n)$, we observe that 
     \begin{equation}\label{eq_0207_4}
     \begin{aligned}
         0\,\leq\, \limsup_{n\to\ift}E^{inter}_n(t_n)\,&=\, \limsup_{n\to\ift} \left(E\left[\+{\xi_n(t_n)}\right]-\f{1}{2}E[\xi_n(t_n)]\right)\\&=\,
    \limsup_{n\to\ift} \left(E\left[\+{\xi_n(t_n)}\right]-\f{1}{2}E[\xi_n(0)]\right)\\
    &=\, \limsup_{n\to\ift}E\left[\+{\xi_n(t_n)}\right]-\calI_{\mu,1,1}
     \end{aligned}
     \end{equation} which follows from the relation that
    $$E[\xi_n(0)]=2\underbrace{E[\+{\xi_n(0)}]}_{\to\q\calI_{\mu,1,1}}-2\underbrace{E^{inter}_n(0)}_{\to\q0}\to 2\calI_{\mu,1,1}.$$ Therefore, it remains to show that 
    \begin{equation}\label{eq_0207_1}
        \limsup_{n\to\ift} E\left[\+{\xi_n(t_n)}\right]\leq \calI_{\mu,1,1}
    \end{equation} to obtain $E^{inter}_n(t_n)\to0$. We first observe that 
    $$\left|E\left[\+{\xi_n(t_n)}\right]-E\left[\min\left\{1,\+{\xi_n(t_n)}\right\}\right]\right|\to0\qd\mbox{as}\q n\to\ift,$$
    which is deduced from the relation
    $$\begin{aligned}
        \left\|\+{\xi_n(t_n)}-\min\left\{1,\+{\xi_n(t_n)}\right\}\right\|_1
        &=\,\nrm{\+{\xi_n(t_n)-1}}_1\\
        &=\,\nrm{\+{\xi_n(0)-1}}_1\\
        &\leq\,
        \nrm{\+{\xi_n(0)-\xi_H^\mu(\cdot-d_n\bfe_z)}}_1\to0
        \qd\mbox{as}\q n\to\ift
    \end{aligned}$$ and the estimate \eqref{eq: energy difference} in Lemma~\ref{lem: energy estimates}. Then we have 
    \begin{equation}\label{eq_0207_2}
        \limsup_{n\to\ift} E\left[\+{\xi_n(t_n)}\right]=
    \limsup_{n\to\ift} E\left[\min\left\{1,\+{\xi_n(t_n)}\right\}\right].
    \end{equation}
    We define $$\mu_n(t):=\f{1}{2}\nrm{r^2\+{\xi_n(t)}}_1\qd\mbox{for each }t\geq0.$$ Then, for large $n\geq1$ satisfying
    $$\left\|\min\left\{1,\+{\xi_n(t_n)}\right\}\right\|_1\q\q\leq\underbrace{\nrm{\+{\xi_n(t_n)}}_1}_{\to \q\nu_\mu<1\q\mbox{by}\q(v)\q\mbox{of}\q\eqref{eq_0210_2} }<1,$$ we have 
    $$\min\left\{1,\+{\xi_n(t_n)}\right\}\in \calP''_{\mu_n(t_n),1,1}.$$
    It follows that 
    $$E\left[\min\left\{1,\+{\xi_n(t_n)}\right\}\right]\leq\calI''_{\mu_n(t_n),1,1}=\calI_{\mu_n(t_n),1,1}$$ for large $n\geq1$. Combined with \eqref{eq_0207_2}, the above relation implies 
    \begin{equation}\label{eq_0207_3}
        \limsup_{n\to\ift} E\left[\+{\xi_n(t_n)}\right]=
    \limsup_{n\to\ift} E\left[\min\left\{1,\+{\xi_n(t_n)}\right\}\right]\leq 
\limsup_{n\to\ift}\calI_{\mu_n(t_n),1,1}\leq\lim_{n\to\ift}\calI_{\mu_n(0),1,1}=\calI_{\mu,1,1}
    \end{equation} which gives \eqref{eq_0207_1}. Here we used the continuity and monotonicity of $\calI_{(\cdot),1,1}$ in $(0,\mu_0]$ given in Remark~\ref{rmk_energy is scaling}, the monotonicity of $\mu_n(\cdot)$ given in Lemma~\ref{lem: monoticity of impulse}, and the convergences $\mu_n(0)\to\mu$ given in \textit{(i)} of \eqref{eq_0210_1}. It proves \textit{(vi)} of \eqref{eq_0210_2}. 

    It remains to show \textit{(iv)} of \eqref{eq_0210_2}. The convergence \textit{(vi)}:
    $$E\left[\+{\xi_n(t_n)}\right]\to \calI_{\mu,1,1}\qd\mbox{as}\q n\to\ift$$ is applied to \eqref{eq_0207_3} and leads to the convergence 
    $\calI_{\mu_n(t_n),1,1}\to\calI_{\mu,1,1}.$ Hence, by Remark~\ref{rmk_energy is scaling}, we have $$\mu_n(t_n)\to\mu\qd\mbox{as}\q n\to\ift.$$ It proves \textit{(iv)} of \eqref{eq_0210_2}.\\

    \noindent\textbf{[3] Convergence of $\+{\xi_n(t_n)}$ to $\xi_H^\mu$ up to translations: For some $\set{\bt_n}\subset\bbR$, it holds that}
    $$\nrm{\+{\xi_n(t_n)}-\xi_H^\mu(\cdot-\bt_n \bfe_z)}_{\left(L^1\cap L^2\cap L^1_w\right)(\bbR^3)}\to0\qd\mbox{as}\q n\to\ift.$$

    As we have shown $$(iv)\, \f{1}{2}\nrm{r^2\+{\xi_n(t_n)}}_1\to\mu,\q\q
    (v)\,\nrm{\+{\xi_n(t_n)}}_1\to\nu_\mu,\q\q\mbox{and}
    \qd(vi)\,E\left[\+{\xi_n(t_n)}\right)\to \calI_{\mu,1,1}\qd\mbox{as}\q n\to\ift,$$ in the previous step, we will now apply Theorem~\ref{thm: compactness of maximizer set} for the sequence of function $\+{\xi_n(t_n)}$ and $\nu=\lmb=1$, by showing some additional properties on $\+{\xi_n(t_n)}$:
    \begin{equation}\label{eq_0124_1}
        \sup_n\nrm{\+{\xi_n(t_n)}}_2<\ift
    \end{equation}
 and 
    \begin{equation}\label{eq_0124_2}
        \lim_{n\to\ift}\int_{\set{\bfx\in\bbR^3:|\+{\xi_n(t_n)}-1|\geq a_n}}\+{\xi_n(t_n)}\,\dd\bfx=0
        \qd\mbox{for some sequence}\q a_n\searrow0.
    \end{equation} We can easily obtain \eqref{eq_0124_1} from the observation
    $$\nrm{\+{\xi_n(t_n)}}_2=\nrm{\+{\xi_n(0)}}_2\to \nrm{\xi_H^\mu}_2\qd\mbox{as}\q n\to\ift.$$ To show \eqref{eq_0124_2}, we first observe that 
    $$\begin{aligned}
        \limsup_{n\to\ift}\int_{\set{ |\+{\xi_n(t_n)}-1|\geq a_n}}\+{\xi_n(t_n)}\,\dd\bfx\,&=\,\limsup_{n\to\ift}\int_{\set{|\+{\xi_n(0)}-1|\geq a_n}}\+{\xi_n(0)}\,\dd\bfx\\
    &=\,
    \limsup_{n\to\ift}\int_{\set{|\+{\xi_n(0)}-1|\geq a_n}\cap A_n}\+{\xi_n(0)}\,\dd\bfx
    \end{aligned}$$
     for the set $A_n:=\supp \xi_H^\mu(\cdot-d_n \bfe_z)$ and an arbitrary sequence $a_n$, due to the relation 
    $$\begin{aligned}
        \int_{\set{ |\+{\xi_n(0)}-1|\geq a_n}\cap A_n^c}\+{\xi_n(0)}\,\dd\bfx\,&=\,\int_{\set{ |\+{\xi_n(0)}-1|\geq a_n} 
        \cap A_n^c}\left|
        \+{\xi_n(0)}-\xi_H^\mu(\cdot-d_n \bfe_z)\right|\,\dd\bfx\\
        &\leq\, \nrm{\+{\xi_n(0)}-\xi_H^\mu(\cdot-d_n \bfe_z)}_1\to0\qd\mbox{as}\q n\to\ift.
    \end{aligned}$$
    Moreover, we obtain
    $$\begin{aligned}
        \limsup_{n\to\ift}\int_{\set{|\+{\xi_n(0)}-1|\geq a_n}\cap A_n}\+{\xi_n(0)}\,\dd\bfx\,&=\,
        \limsup_{n\to\ift}\int_{\set{|\+{\xi_n(0)}-\xi_H^\mu(\cdot-d_n \bfe_z)|\geq a_n}\cap A_n}\+{\xi_n(0)}\,\dd\bfx\\
        &\leq\, \limsup_{n\to\ift}\int_{A_n}\f{1}{a_n}\left|\+{\xi_n(0)}-\xi_H^\mu(\bfx-d_n \bfe_z)\right|\+{\xi_n(0)}\,\dd\bfx\\
        &\leq\,  \limsup_{n\to\ift}\f{1}{a_n}\left\|\+{\xi_n(0)}-\xi_H^\mu(\cdot-d_n \bfe_z)\right\|_2\cdot\nrm{\+{\xi_n(0)}}_2\\
        &\leq\, \nrm{\xi_H^\mu}_2 \cdot\limsup_{n\to\ift}\f{1}{a_n}\left\|\+{\xi_n(0)}-\xi_H^\mu(\cdot-d_n \bfe_z)\right\|_2=0
    \end{aligned}
    $$ if we put $a_n=\nrm{\+{\xi_n(0)}-\xi_H^\mu(\cdot-d_n \bfe_z)}_2^{1/2}$. It proves \eqref{eq_0124_2}. Therefore we apply Theorem~\ref{thm: compactness of maximizer set} for the sequence $\+{\xi_n(t_n)}$ to obtain a subsequence of $\set{\+{\xi_n(t_n)}}$ (still denoted by $\set{\+{\xi_n(t_n)}}$), a sequence of translation $\set{\bt_n}\subset\bbR$, and a maximizer $\xi\in \calS_{\mu,1,1}$ such that 
    $$\nrm{r^2(\+{\xi_n(t_n,\cdot+\bt_n \bfe_z)}-\xi)}_1\to0\qd\mbox{as}\q n\to\ift.$$ Without loss of generality, from Theorem~\ref{thm: uniqueness of maximizer} we put $\xi=\xi_H^\mu$ to write 
    $$\nrm{r^2(\+{\xi_n(t_n,\cdot+\bt_n \bfe_z)}-\xi_H^\mu)}_1\to0\qd\mbox{as}\q n\to\ift.$$ 
    It remains to show that 
    $$\+{\xi_n(t_n,\cdot+\bt_n \bfe_z)}\to\xi_H^\mu\qd\mbox{in}\q L^1\cap L^2(\bbR^3).$$ For $L^2$ convergence, from \eqref{eq_0124_1} we obtain a weak limit $\tld{\xi}\in L^2(\bbR^3)$ such that 
    $$\+{\xi_n(t_n,\cdot+\bt_n \bfe_z)}\weakto\tld{\xi}\qd\mbox{in}\q L^2(\bbR^3).$$ We easily obtain $\tld{\xi}=\xi_H^\mu$ by observing that, for any bounded set $U\subset\bbR^3$, we have 
    $$\begin{aligned}
        \left|\int_{U}r^2\left(\tld{\xi}-\xi_H^\mu\right)\,\dd\bfx\right|
    \,&=\,\left|\lim_{n\to\ift}\int_{U}r^2\left(\+{\xi_n(t_n,\cdot+\bt_n \bfe_z)}-\xi_H^\mu\right)\,\dd\bfx\right|\\
    &\leq\, \lim_{n\to\ift}\nrm{r^2(\+{\xi_n(t_n,\cdot+\bt_n \bfe_z)}-\xi_H^\mu)}_1=0.
    \end{aligned}
    $$ Then, we obtain the strong $L^2$ convergence
    $$\+{\xi_n(t_n,\cdot+\bt_n \bfe_z)}\to\xi_H^\mu\qd\mbox{in}\q L^2(\bbR^3).$$
    by the convergence of $L^2$ norm 
$ \nrm{\+{\xi_n(t_n)}}_2\to
\nrm{\xi_H^\mu}_2.$
For $L^1$ convergence, we note $\xi_H^\mu=\bfone_{B_\mu}$ to observe that 
    $$\begin{aligned}
        \nrm{\+{\xi_n(t_n,\cdot+\bt_n \bfe_z)}-\xi_H^\mu}_1\,&=\,\nrm{\+{\xi_n(t_n,\cdot+\bt_n \bfe_z)}-\xi_H^\mu}_{L^1(B_\mu)}+\nrm{\+{\xi_n(t_n,\cdot+\bt_n \bfe_z)}}_{L^1(B_\mu^c)}\\
        &=\,\nrm{\+{\xi_n(t_n,\cdot+\bt_n \bfe_z)}-\xi_H^\mu}_{L^1(B_\mu)}+
        \nrm{\+{\xi_n(t_n,\cdot+\bt_n \bfe_z)}}_{L^1(\bbR^3)}-\nrm{\+{\xi_n(t_n,\cdot+\bt_n \bfe_z)}}_{L^1(B_\mu)},
    \end{aligned}
    $$ where 
    $$\lim_{n\to\ift}\nrm{\+{\xi_n(t_n,\cdot+\bt_n \bfe_z)}-\xi_H^\mu}_{L^1(B_\mu)}\leq \lim_{n\to\ift}
    \nrm{\+{\xi_n(t_n,\cdot+\bt_n \bfe_z)}-\xi_H^\mu}_{L^2(\bbR^3)}\cdot\underbrace{|B_\mu|^{1/2}}_{=\nrm{\xi_H^\mu}_2} =0
    ,$$ 
    $$\begin{aligned}
        \lim_{n\to\ift}\nrm{\+{\xi_n(t_n,\cdot+\bt_n \bfe_z)}}_{L^1(B_\mu)}=
    \lim_{n\to\ift}\nrm{\+{\xi_n(t_n,\cdot+\bt_n \bfe_z)}\cdot\xi_H^\mu}_{L^1(\bbR^3)}    &=\lim_{n\to\ift}\int_{\bbR^3}\+{\xi_n(t_n,\cdot+\bt_n \bfe_z)}\cdot\xi_H^\mu\,\dd\bfx\\
    &= \nrm{\xi_H^\mu}_{L^2(\bbR^3)}^2=\nrm{\xi_H^\mu}_{L^1(\bbR^3)}=\nu_\mu,
    \end{aligned}$$ and 
    $$\lim_{n\to\ift}\nrm{\+{\xi_n(t_n,\cdot+\bt_n \bfe_z)}}_{L^1(\bbR^3)}=
    \lim_{n\to\ift}\nrm{\+{\xi_n(0)}}_{L^1(\bbR^3)}=\nrm{\xi_H^\mu}_{L^1(\bbR^3)}=\nu_\mu.$$
    Therefore, we obtain the $L^1$ convergence 
    $$\+{\xi_n(t_n,\cdot+\bt_n \bfe_z)}\to\xi_H^\mu\qd\mbox{in}\q L^1(\bbR^3).$$      

    \noindent\textbf{[4] Lower bound of translation $\set{\bt_n}\subset\bbR$:  
\begin{equation}\label{eq_0219_4}
        \liminf_{n\to\ift} \bt_n\geq \kpp_\mu
    \end{equation} where $\kpp_\mu>0$ is the radius of the (scaled) Hill's spherical vortex $\xi_H^\mu=\xi_{H(1,\kpp_\mu)}$.}\\
    
    Up to now, we have shown the convergence 
    $$\nrm{\+{\xi_n(t_n)}\to\xi_H^\mu(\cdot-\bt_n \bfe_z))}_{\left(L^1\cap L^2\cap L^1_w\right)(\bbR^3)}\to0\qd\mbox{as}\q n\to\ift$$ for some sequence $\set{\bt_n}\subset\bbR.$
    Then we observe that
$$\begin{aligned}
0<\int_{x_3<-\bt_n}\xi_H^\mu(\bfx)\,\dd\bfx\,&=
\int_{x_3<0}\xi_H^\mu(\bfx-\bt_n \bfe_z)\,\dd\bfx\\
&=\int_{x_3<0}\left|\+{\xi_n(t_n)}-\xi_H^\mu(\bfx-\bt_n \bfe_z)\right|\dd\bfx\\
&\leq\,
    \nrm{\+{\xi_n(t_n)}-\xi_H^\mu(\cdot-\bt_n \bfe_z)}_{L^1(\bbR^3)}
    \to0\qd\mbox{as}\q n\to\ift.
\end{aligned}$$ It implies that 
$$0=\limsup_{n\to\ift}\int_{x_3<-\bt_n}\xi_H^\mu\,\dd\bfx=
\int_{x_3<-\liminf_{n\to\ift}{\bt_n}}\xi_H^\mu\,\dd\bfx,$$ and therefore we get \eqref{eq_0219_4}: $$\liminf_{n\to\ift} \bt_n\geq \kpp_\mu.$$

\noindent\textbf{[5] Completion of proof by deriving a contradiction to our initial assumption: }
\begin{equation}\label{eq0319_1}
    \inf_{\tau\geq\kpp_\mu}\nrm{\+{\xi_n(t_n)}-\xi_H^\mu\left(\cdot-\tau\bfe_z\right)}_{\left(L^1\cap L^2\cap L^1_w\right)(\bbR^3)}\geq\eps\qd\mbox{for some}\q \eps>0.
\end{equation}

We recall our previous results: 
    $$\lim_{n\to\ift}\nrm{\+{\xi_n(t_n)}\to\xi_H^\mu(\cdot-\bt_n \bfe_z))}_{\left(L^1\cap L^2\cap L^1_w\right)(\bbR^3)}=0
    \qd\mbox{with}\qd
    \liminf_{n\to\ift}\bt_n\geq\kpp_\mu.$$ 
We first note that, if  $\displaystyle\liminf_{n\to\ift}\bt_n\gneqq\kpp_\mu,$ then 
    $$\bt_n>\kpp_\mu\qd\mbox{for large}\q n\geq1,$$ and it obviously contradicts our initial assumption \eqref{eq0319_1} by
    $$\eps\leq \inf_{\tau\geq\kpp_\mu}\nrm{\+{\xi_n(t_n)}-\xi_H^\mu\left(\cdot-\tau\bfe_z\right)}_{\left(L^1\cap L^2\cap L^1_w\right)(\bbR^3)}\leq \nrm{\+{\xi_n(t_n)}\to\xi_H^\mu(\cdot-\bt_n \bfe_z))}_{\left(L^1\cap L^2\cap L^1_w\right)(\bbR^3)}\to0
    \qd\mbox{as}\q n\to\ift.$$ Now, we assume that    $$\liminf_{n\to\ift}\bt_n=\kpp_\mu.$$
    We now give a claim below which follows from our initial assumption \eqref{eq0319_1}.\\

    \noindent\textit{(Claim)} There exists $s>0$ such that 
    \begin{equation}\label{eq0319_2}
        \inf_{\tau\geq \kpp_\mu-s}\nrm{\+{\xi_n(t_n)}-\xi_H^\mu\left(\cdot-\tau\bfe_z\right)}_{\left(L^1\cap L^2\cap L^1_w\right)(\bbR^3_+)}\geq\f{1}{2}\eps.
    \end{equation}

   For fixed $n\geq1$ we put
    $$\calF(\tau):=\nrm{\+{\xi_n(t_n)}-\xi_H^\mu\left(\cdot-\tau\bfe_z\right)}_{\left(L^1\cap L^2\cap L^1_w\right)(\bbR^3_+)}\qd\mbox{for}\q\tau\in\bbR$$ and observe the continuity
    $$|\calF(\tau_1)-\calF(\tau_2)|\leq \nrm{\xi_H^\mu\left(\cdot-\tau_1\bfe_z\right)-\xi_H^\mu\left(\cdot-\tau_2\bfe_z\right)}_{\left(L^1\cap L^2\cap L^1_w\right)(\bbR^3_+)}\leq C_0|\tau_1-\tau_2|^{1/2}\qd\mbox{for}\q
    |\tau_1-\tau_2|\ll1$$ for some constant $C_0>0$ depending only on $\mu>0$. It gives
    $$
    \inf_{\tau\geq\kpp_\mu}\calF(\tau)\leq 
    \inf_{\tau\geq\kpp_\mu-s}\calF(\tau)+C_0s^{1/2}\qd\mbox{for  }s\ll1,$$ and combining it with \eqref{eq0319_1} leads to 
    $$\begin{aligned}
        \eps\leq\inf_{\tau\geq \kpp_\mu}\nrm{\+{\xi_n(t_n)}-\xi_H^\mu&\left(\cdot-\tau\bfe_z\right)}_{\left(L^1\cap L^2\cap L^1_w\right)(\bbR^3_+)}\,=\,
    \inf_{\tau\geq\kpp_\mu}\calF(\tau)\leq 
    \inf_{\tau\geq\kpp_\mu-s}\calF(\tau)+C_0s^{1/2}\\
    &=\, \inf_{\tau\geq \kpp_\mu-s}\nrm{\+{\xi_n(t_n)}-\xi_H^\mu\left(\cdot-\tau\bfe_z\right)}_{\left(L^1\cap L^2\cap L^1_w\right)(\bbR^3_+)}+C_0s^{1/2}.
    \end{aligned}$$ We now fix small $s>0$ satisfying $C_0s^{1/2}<\eps/2$ to obtain \eqref{eq0319_2}:
    $$\f{1}{2}\eps\leq \inf_{\tau\geq \kpp_\mu-s}\nrm{\+{\xi_n(t_n)}-\xi_H^\mu\left(\cdot-\tau\bfe_z\right)}_{\left(L^1\cap L^2\cap L^1_w\right)(\bbR^3_+)}.$$ It completes the proof of \textit{(Claim)}.\\

    Let $s>0$ be the constant given in the inequality \eqref{eq0319_2} in \textit{(Claim)} above. For such $s>0$, it holds that 
    $$\bt_n>\kpp_\mu-s\qd\mbox{for large}\q n\geq1,$$ and combining it with our previous result \eqref{eq0319_2} gives
    $$\begin{aligned}
        \f{1}{2}\eps\,&\leq\, \inf_{\tau\geq \kpp_\mu-s}\nrm{\+{\xi_n(t_n)}-\xi_H^\mu\left(\cdot-\tau\bfe_z\right)}_{\left(L^1\cap L^2\cap L^1_w\right)(\bbR^3)}\\
        &\leq\, \nrm{\+{\xi_n(t_n)}-\xi_H^\mu(\cdot-\bt_n \bfe_z)}_{L^1(\bbR^3)}\to0\qd\mbox{as}\q n\to\ift
    \end{aligned}$$ which is a contradiction to $\eps>0$.
    It completes the proof of Theorem~\ref{thm: main result of mu with no shift estimate}.
\end{proof} 

\subsection{Proof of Theorem~\ref{thm: main result with no shift estimate}}\label{subsec: proof of main result with no shift estimate}\q

\begin{proof}[Proof of Theorem~\ref{thm: main result with no shift estimate}]
    Let $\eps>0$ be any constant. We fix a small $\mu:=\mu_0/2\in(0,\mu_0)$ for the constant $\mu_0>0$ given in Theorem~\ref{thm: main result of mu with no shift estimate} to guarantee
    $\kpp_\mu<1$ (if necessary, we redefine $\mu_0$ small enough). We define $\dlt,d_0>0$ as 
    $$\dlt=\dlt(\eps):=\kpp_\mu^{-3/2}\dlt'(\mu,\kpp_\mu^5\eps)\qd\mbox{and}\qd 
    d_0=d_0(\eps):=\kpp_\mu^{-1} d_0'(\mu,\kpp_\mu^5\eps)>1$$ for the family of constants $\dlt',d_0'>0$ given in Theorem~\ref{thm: main result of mu with no shift estimate}. We now choose any constant $d\geq d_0$ and any axisymmetric initial data $\xi_0\in\left(L^1\cap L^2\cap L^1_w\right)(\bbR^3)$ having odd symmetry
    $\xi_0(r,z)=-\xi_0(r,-z)\geq0$ for $z\geq0$ and satisfying the assumption 
    $$\nrm{\+{\xi_0}-\xi_H(\cdot-d\bfe_z)}_{\left(L^1\cap L^2\cap L^1_w\right)(\bbR^3)}<\dlt.$$ Then by the scaling on $\xi_0$ given by 
    \begin{equation}\label{eq_0210_3}
        \xi^\mu_0(r,z):=\xi_0\left(\f{r}{\kpp_\mu},\f{z}{\kpp_\mu}\right)
    \end{equation}
    we obtain that 
    $$\begin{aligned}
        \nrm{\+{\xi^\mu_0}-\xi^\mu_H(\cdot-\kpp_\mu d\bfe_z)}_{L^1}\,&=\,
    \kpp_\mu^3\cdot\nrm{\+{\xi_0}-\xi_H(\cdot-d\bfe_z)}_{L^1},\\
    \nrm{\+{\xi^\mu_0}-\xi^\mu_H(\cdot-\kpp_\mu d\bfe_z)}_{L^2}\,&=\,
    \kpp_\mu^{3/2}\cdot\nrm{\+{\xi_0}-\xi_H(\cdot-d\bfe_z)}_{L^2},
    \end{aligned}$$ 
    and $$\nrm{\+{\xi^\mu_0}-\xi^\mu_H(\cdot-\kpp_\mu d\bfe_z)}_{L^1_w}=
    \kpp_\mu^5\cdot\nrm{\+{\xi_0}-\xi_H(\cdot-d\bfe_z)}_{L^1_w}$$ which lead to 
    $$\nrm{\+{\xi^\mu_0}-\xi^\mu_H(\cdot-\kpp_\mu d\bfe_z)}_{L^1\cap L^2\cap L^1_w}\leq
    \kpp_\mu^{3/2}\cdot\nrm{\+{\xi_0}-\xi_H(\cdot-d\bfe_z)}_{L^1\cap L^2\cap L^1_w}<\kpp_\mu^{3/2}\dlt=\dlt'(\mu,\kpp_\mu^5\eps).$$ Together with the observation that 
    $$\kpp_\mu d\geq \kpp_\mu d_0= d_0'(\mu,\kpp_\mu^5\eps),$$ Theorem~\ref{thm: main result of mu with no shift estimate} says that the axisymmetric solution of \eqref{eq: axi no swirl Euler} $\xi^\mu(t)$ with the initial data $\xi_0^\mu$ satisfies 
    $$\inf_{\tau\geq\kpp_\mu}\nrm{\+{\xi^\mu(t)}-\xi_H^\mu(\cdot-\tau\bfe_z)}_{\left(L^1\cap L^2 \cap L^1_w\right)(\bbR^3)}<\kpp_\mu^5\eps\qd\mbox{for each}\q t\geq0.$$ 
    Then by denoting the solution originated from $\xi_0$ as $\xi(t)$, we have the relation 
    $$\xi(t,(r,z))=\xi^\mu\left(\kpp_\mu t,(\kpp_\mu r,\kpp_\mu z)\right)$$ (implying \eqref{eq_0210_3} for $t=0$) and therefore we get
    $$\begin{aligned}
        \nrm{\+{\xi(t)}-\xi_H(\cdot-\tau\bfe_z)}_{L^1}\,&=\, \kpp_\mu^{-3}\cdot\nrm{\+{\xi^\mu(\kpp_\mu t)}-\xi_H^\mu(\cdot-\kpp_\mu \tau\bfe_z)}_{L^1},\\
        \nrm{\+{\xi(t)}-\xi_H(\cdot-\tau\bfe_z)}_{L^2}\,&=\, \kpp_\mu^{-3/2}\cdot\nrm{\+{\xi^\mu(\kpp_\mu t)}-\xi_H^\mu(\cdot-\kpp_\mu\tau\bfe_z)}_{L^2},
    \end{aligned}$$ 
    and 
    $$\nrm{\+{\xi(t)}-\xi_H(\cdot-\tau\bfe_z)}_{L^1_w}\,=\, \kpp_\mu^{-5}\cdot\nrm{\+{\xi^\mu(\kpp_\mu t)}-\xi_H^\mu(\cdot-\kpp_\mu \tau\bfe_z)}_{L^1_w}$$  which lead to 

    $$\begin{aligned}
        \inf_{\tau\geq1}\nrm{\+{\xi(t)}-\xi_H(\cdot-\tau\bfe_z)}_{L^1\cap L^2\cap L^1_w}\,&\leq\, 
    \kpp_\mu^{-5}\cdot\inf_{\tau\geq1}\nrm{\+{\xi^\mu(\kpp_\mu t)}-\xi_H^\mu(\cdot-\kpp_\mu \tau\bfe_z)}_{L^1\cap L^2\cap L^1_w}\\
    &=\, \kpp_\mu^{-5}\cdot\inf_{\tau\geq \kpp_\mu}\nrm{\+{\xi^\mu(\kpp_\mu t)}-\xi_H^\mu(\cdot-\tau\bfe_z)}_{L^1\cap L^2\cap L^1_w}\\
    &<\, \kpp_\mu^{-5}\cdot \kpp_\mu^5\eps=\eps\qd\mbox{for each}\q t\geq0.
    \end{aligned}
    $$ Therefore there exists a shift function $\tau:[0,\ift)\to[1,\ift)$ such that 
$$\nrm{\+{\xi(t)}-\xi_H(\cdot-\tau(t)\bfe_z)}_{L^1\cap L^2\cap L^1_w}<\eps\qd\mbox{for each}\q t\geq0.$$
    It completes the proof of Theorem~\ref{thm: main result with no shift estimate}.
\end{proof}

\section{Shift estimate: proof of Theorem~\ref{thm: shift estimate}}\label{sec: shift}

\subsection{Proof of Theorem  ~\ref{thm: shift estimate}}\label{subsec: shift function estimate}\q

For the solution $\xi(t)$ to \eqref{eq: axi no swirl Euler} of any axisymmetric initial data $\xi_0\in \left(L^1\cap L^\ift\cap L^1_w\right)(\bbR^3)$ with $r\xi_0\in L^\ift(\bbR^3)$ having the odd symmetry 
$$\xi_0(r,z)=-\xi_0(r,-z)\geq0\qd\mbox{for}\q z\geq0,$$ we define its center of mass on the positive part of $x_3$-axis: 
$$z_c(t):= \f{1}{\nrm{\+{\xi(t)}}_{L^1(\bbR^3)}}\int_{x_3>0}x_3\+{\xi(t)}\,\dd\bfx.$$ 
We will estimate the shift function $\tau(\cdot)$ given in Theorem~\ref{thm: main result with no shift estimate} by the bootstrap method based on the estimates of functions $z_c(\cdot)$ and the correlation with the shift function $\tau(\cdot)$ and the center $z_c(\cdot)$.
We will give a lemma that directly proves Theorem  ~\ref{thm: shift estimate}. The lemma below contains the same assumptions as Theorem  ~\ref{thm: shift estimate}, and the estimates given in the conclusion will be used to prove the estimate \eqref{(revisited) eq: tau(t) vs Wt} in Theorem  ~\ref{thm: shift estimate}.

\begin{lem}\label{lem: c.o.m estimate} 
For any $\calM>0$, there exists a constant $\eps_0=\eps_0(\calM)>0$ such that the following holds:\\

For each $\eps\in(0,\eps_0)$, let $d_0=d_0(\eps)>1$ be the corresponding constant given in Theorem~\ref{thm: main result with no shift estimate} (If necessary, we redefine
$d_0(\eps)>\eps^{-1}$). Let $d\geq d_0$ be any constant and 
$\xi_0$ be any axisymmetric initial data satisfying the assumptions in Theorem~\ref{thm: main result with no shift estimate} such that 
$\nrm{\xi_0}_{L^\ift(\bbR^3)}\leq \calM$ and the support of $\xi_0$ is contained in  $\set{\bfx\in\bbR^3:\, d-2\leq|x_3|\leq d+2}.$ Then any shift function $\tau:[0,\ift)\to[1,\ift)$ in Theorem~\ref{thm: main result with no shift estimate} satisfying $\tau(0)=d$ and the center of mass (on $z$-axis) $z_c:[0,\ift)\to\bbR$ of the solution $\xi(t)$ for the initial data $\xi_0$ satisfy the following estimates: 
\begin{equation}\label{eq: z(t) vs tau(t)}
    |z_c(t)-\tau(t)| < C\eps(t+1)\qd\mbox{for each}\q t\geq0
\end{equation} for some constant $C=C(\calM)>0$, and
\begin{equation}\label{eq: z(t) vs Wt}
    |z_c(t)-z_c(0)-W_Ht|< C'\eps t
    \qd\mbox{for each}\q t\geq0
\end{equation} for some uniform constant $C'>0$, where $W_H=2/15$ is the traveling speed of the (single) Hill's spherical vortex $\xi_H$.
\end{lem}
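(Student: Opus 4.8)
The plan is to control the center of mass $z_c$ through its own differential equation and to relate $z_c$ to the shift function $\tau$ purely by first‑moment estimates, so that the two bounds \eqref{eq: z(t) vs tau(t)} and \eqref{eq: z(t) vs Wt} become almost independent of each other, coupled only through a short continuity (bootstrap) argument. First I would record the uniform bounds available under the hypotheses: $\nrm{\+{\xi(t)}}_{L^1\cap L^2}$ is conserved and $\simeq\nrm{\xi_H}_{L^1\cap L^2}$; $\nrm{r^2\+{\xi(t)}}_1$ is nonincreasing (Lemma~\ref{lem: monoticity of impulse}) and $\leq\nrm{r^2\xi_0}_1=O(1)$ (the $r^2$ weight is $z$‑translation invariant, so this is bounded independently of $d$), whence $\Phi(\+{\xi(t)}):=\nrm{r^2\+{\xi(t)}}_1+\nrm{\+{\xi(t)}}_{L^1\cap L^2}\leq C$ uniformly in $t$; and $\sup_t\nrm{\bfu(t)}_{L^\ift}\leq V=V(\calM)$ by \eqref{eq: L^ift bound of velocity}. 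Since $\xi(t)$ is odd in $z$, the plane $\set{x_3=0}$ is invariant under the (log‑Lipschitz) flow of $\bfu$, so transport plus the support hypothesis gives $\supp\+{\xi(t)}\subset\set{x_3>0}\cap\set{\,d-2-Vt\leq x_3\leq d+2+Vt\,}$. Using the Lagrangian formulation, $m_1(t):=\int_{x_3>0}x_3\+{\xi(t)}\,\dd\bfx$ is $C^1$ and, integrating by parts (the boundary term on $\set{x_3=0}$ vanishes since $x_3=0$ there),
$$\dot z_c(t)=\f{1}{\nrm{\+{\xi_0}}_1}\int_{x_3>0}u^z[\xi(t)]\,\+{\xi(t)}\,\dd\bfx=\f{A(t)-B(t)}{\nrm{\+{\xi_0}}_1},\qd u^z[\zeta]:=\f{\rd_r\calG[\zeta]}{r},$$
where $A(t):=\int u^z[\+{\xi(t)}]\,\+{\xi(t)}\,\dd\bfx$ is the self‑interaction of the upper vortex and $B(t):=\int u^z[\+{-\xi(t)}]\,\+{\xi(t)}\,\dd\bfx$ its interaction with the lower one (both integrals are over $\set{x_3>0}=\supp\+{\xi(t)}$).

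\textbf{Proof of \eqref{eq: z(t) vs tau(t)} (no bootstrap needed).} Put $h(t):=\xi_H(\cdot-\tau(t)\bfe_z)$. Because $\xi_H$ is even about its center, $\int(x_3-\tau(t))h(t)\,\dd\bfx=0$, hence
$$\nrm{\+{\xi_0}}_1\big(z_c(t)-\tau(t)\big)=\int_{x_3>0}(x_3-\tau(t))\big(\+{\xi(t)}-h(t)\big)\,\dd\bfx.$$
Theorem~\ref{thm: main result with no shift estimate} gives $\nrm{\+{\xi(t)}-h(t)}_1<\eps$, and for $\eps_0$ small ($\eps<\nrm{\xi_H}_1$) this forces $\supp h(t)\cap\supp\+{\xi(t)}$ to have positive measure; comparing the $x_3$‑ranges of the two supports yields $|\tau(t)-d|\lesssim 1+Vt$ and then $|x_3-\tau(t)|\lesssim 1+Vt$ on $\supp\+{\xi(t)}\cup\supp h(t)$. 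Substituting, $|z_c(t)-\tau(t)|\leq C(\calM)\,\eps\,(1+t)$.

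\textbf{The two momentum integrals.} For $A(t)$ the key identity is $\int u^z[\xi_H]\,\xi_H\,\dd\bfx=W_H\nrm{\xi_H}_1$, obtained by differentiating $\int x_3\,\xi_H(\cdot-sW_H\bfe_z)\,\dd\bfx=sW_H\nrm{\xi_H}_1$ in $s$ and using the equation of motion; by $z$‑translation invariance the same holds with $h(t)$. Writing $\+{\xi(t)}=h(t)+g(t)$ with $\nrm{g(t)}_{L^1\cap L^2\cap L^1_w}<\eps$ and expanding, $A(t)-W_H\nrm{\xi_H}_1$ is a sum of bilinear pairings of $g(t)$ with $h(t)$ and with $\+{\xi(t)}$, each of which is $\lesssim\Phi(g(t))\lesssim\eps$ with a \emph{universal} constant, by the bilinear estimate for the kernel $\rd_rG$ that is proved exactly like \eqref{eq: interaction energy bound} (note $\rd_rG=O(r)$ as $r\to0$, so $u^z$ has no axis singularity, and $\rd_rG$ obeys pointwise bounds of the same strength as $G$); thus $|A(t)-W_H\nrm{\xi_H}_1|\leq C\eps$. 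For $B(t)$, $\supp\+{\xi(t)}\subset\set{x_3>0}$ and $\supp\+{-\xi(t)}\subset\set{x_3<0}$, the latter within $\eps$ of $\xi_H(\cdot+\tau(t)\bfe_z)$; expanding both factors as (a remote Hill's vortex) $+$ ($\eps$‑small remainder), the terms carrying a remote Hill's‑vortex factor are controlled by the decay of its Biot–Savart velocity at distance $\gtrsim\tau(t)$ from its support (and $\nrm{\text{remainder}}_1=O(1)$), while the remainder$\times$remainder term is handled by the same bilinear estimate; this gives $|B(t)|\leq C\big(\tau(t)^{-1}+\eps\big)$ with a universal constant.

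\textbf{Bootstrap and proof of \eqref{eq: z(t) vs Wt}.} From the support/overlap argument at $t=0$ together with $\tau(0)=d$, one has $z_c(0)\geq d-C\nrm{\+{\xi_0}-\xi_H(\cdot-d\bfe_z)}_1\geq d_0-1$. Let $T^\ast:=\sup\set{T\geq0:\ z_c(s)\geq\tfrac{d_0}{2}+\tfrac{W_H}{2}s\ \text{on}\ [0,T]}>0$. On $[0,T^\ast]$, \eqref{eq: z(t) vs tau(t)} gives $\tau(s)\geq z_c(s)-C(\calM)\eps(s+1)\geq\tfrac{d_0}{4}+\tfrac{W_H}{4}s$ once $\eps_0(\calM)$ is small; since $d_0>\eps^{-1}$, this makes $\tau(s)^{-1}\lesssim\eps$, hence $|B(s)|\leq C\eps$, and since $A(s)=W_H\nrm{\xi_H}_1+O(\eps)$ and $\nrm{\+{\xi_0}}_1=\nrm{\xi_H}_1+O(\eps)$ (all universal), $\dot z_c(s)=W_H+O(\eps)\geq\tfrac34 W_H$ on $[0,T^\ast]$. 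Integrating from $0$ and using $z_c(0)\geq d_0-1$ improves the defining inequality strictly, so by continuity $T^\ast=\ift$. Hence $\tau(t)\geq\tfrac{d_0}{4}$ for all $t$, so $|\dot z_c(t)-W_H|\leq C'\eps$ with a universal $C'$, and integrating in time yields $|z_c(t)-z_c(0)-W_Ht|\leq C'\eps t$. (The two estimates then give \eqref{(revisited) eq: tau(t) vs Wt} by the triangle inequality, since $\tau(0)=d$ and $|z_c(0)-d|$ is of order $\nrm{\+{\xi_0}-\xi_H(\cdot-d\bfe_z)}_1\ll\eps$.) The conceptual core is short — the identity $\int u^z[\xi_H]\xi_H=W_H\nrm{\xi_H}_1$, perturbativity, and a bootstrap — so I expect the main difficulty to be the Green's‑function work in Step~2: establishing the bilinear and pointwise far‑field estimates for $\rd_rG/r$ so that the errors in $A$ and in the remainder part of $B$ are genuinely $O(\eps)$ (rather than $O(\sqrt\eps)$), getting a quantitative decay $|B(t)|\lesssim\tau(t)^{-1}$ for the cross term, and tracking the universal versus $\calM$‑dependent constants carefully enough that the hypothesis $d_0>\eps^{-1}$ is exactly what absorbs the $\tau(t)^{-1}$ loss.
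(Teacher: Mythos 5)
Your proposal is correct and follows essentially the same route as the paper's proof: the first-moment identity $\int x_3\xi_H\,\dd\bfx=0$ combined with support/flow-map control for \eqref{eq: z(t) vs tau(t)}, the identity $\calB[\xi_H,\xi_H]=W_H\nrm{\xi_H}_1$ (in your notation $\int u^z[\xi_H]\xi_H=W_H\nrm{\xi_H}_1$) plus an $O(\eps)$ perturbative bound for the self-interaction and an $O(1/\tau)$ far-field bound for the cross term, and a continuity bootstrap that uses $d_0>\eps^{-1}$ to keep $\tau(t)$ large and close the estimate. One caution: your claim that $\rd_r G$ "obeys pointwise bounds of the same strength as $G$" is not accurate (the relevant bound contains an extra $|z-z'|^{-3}$ term), so the $O(\eps)$ bilinear bounds for the self-interaction errors should be obtained, as the paper does, from Cauchy--Schwarz together with $\nrm{\calK[\xi]}_{L^2}^2\simeq E[\xi]$ and the energy estimate \eqref{eq: energy bound}, reserving the pointwise bound on $\rd_r G$ for the far-field cross term only.
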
 

Lemma~\ref{lem: c.o.m estimate} above will be proved later in Subsection~\ref{subsec: c.o.m estimate}. For now, we prove Theorem  ~\ref{thm: shift estimate} by obtaining the shift estimate \eqref{(revisited) eq: tau(t) vs Wt} from the estimates \eqref{eq: z(t) vs tau(t)} and \eqref{eq: z(t) vs Wt} given in Lemma~\ref{lem: c.o.m estimate} above.

\begin{proof}[Proof of Theorem  ~\ref{thm: shift estimate}] 
The proof follows directly from the estimate \eqref{eq: z(t) vs tau(t)} and \eqref{eq: z(t) vs Wt}. That is, we obtain the shift function estimate \eqref{(revisited) eq: tau(t) vs Wt} in Theorem~\ref{thm: shift estimate} as 
    $$\begin{aligned}
        |\tau(t)-\tau(0)-W_Ht|
        \,&\leq\, |\tau(t)-z_c(t)|+|\tau(0)-z_c(0)|+|z_c(t)-z_c(0)-W_Ht|\\
        &\leq\, C\eps(t+1)+C\eps+
        C'\eps t\\
        &\lesssim_{\calM}\eps(t+1).
    \end{aligned}$$It completes the proof.
\end{proof}

\subsection{Proof of Lemma~\ref{lem: c.o.m estimate}}\label{subsec: c.o.m estimate}

\begin{proof}[Proof of Lemma~\ref{lem: c.o.m estimate}] From the assumption 
\begin{equation}\label{eq0718_3}
    \nrm{\+{\xi(t)}-\xi_H(\cdot-\tau(t)\bfe_z)}_{L^1\cap L^2\cap L^1_w(\bbR^3)}<\eps,
\end{equation}
 we may assume that $\eps>0$ is small enough to give
\begin{equation}\label{eq0307_3}
    \f{1}{2}\nrm{\xi_H}_{L^p}\leq\nrm{\+{\xi_0}}_{L^p}=\nrm{\+{\xi(t)}}_{L^p}\leq 2\nrm{\xi_H}_{L^p}\qd\mbox{for}\q p\in\set{1,2}
\end{equation} and 
\begin{equation}\label{eq0307_4}
    \f{1}{2}\nrm{\xi_H}_{L^1_w}\leq\nrm{\+{\xi(t)}}_{L^1_w}\leq 2\nrm{\xi_H}_{L^1_w}.
\end{equation} In other words, $L^1$, $L^2$, and $L^1_w$ norms of the solution $\+{\xi(t)}$ will be regarded as uniform constants hereafter. We first prove that the estimate \eqref{eq: z(t) vs tau(t)} holds for $t=0$. Observe that \eqref{eq0307_3} and the initial assumption 
\begin{equation}\label{eq0319_3}
    \supp\xi_0\subset\set{\bfx\in\bbR^3:\, 
    d-2\leq |x_3|\leq d+2}
\end{equation}
give us
$$\begin{aligned}
    |z_c(0)-\tau(0)|\,&=\,\f{1}{\nrm{\+{\xi_0}}_{L^1}}\left|\int_{\bbR^3}\left(x_3-\tau(0)\right)\+{\xi_0}\,\dd\bfx\right|\\
    &=\, \f{1}{\nrm{\+{\xi_0}}_{L^1}}\left|\int_{\bbR^3}\left(x_3-\tau(0)\right)\left(\+{\xi_0}-\xi_H(\cdot-\tau(0)\bfe_z)\right)\,\dd\bfx\right|\\
    &\leq\, \f{1}{\nrm{\+{\xi_0}}_{L^1}}\int_{\bbR^3}|x_3-\tau(0)|\cdot\left|\+{\xi_0}-\xi_H(\cdot-\tau(0)\bfe_z)\right|\,\dd\bfx\\
    &\leq\, \f{2}{\nrm{\+{\xi_0}}_{L^1}}\eps\,\leq\, \f{4}{\nrm{\xi_H}_{L^1}}\eps
    <\eps
\end{aligned}$$ by using $\nrm{\xi_H}_{L^1}=4\pi/3$. Here, we used the relation $\int_{\bbR^3}x_3\xi_H\dd\bfx=0.$ In other words, we get
\begin{equation}\label{eq0307_1}
    |z_c(0)-d|<\eps
\end{equation} by $\tau(0)=d$. The remaining proof consists of 3 steps. During the proof, we will constantly assume the smallness of $\eps>0$ depending only on $\calM>0$.\\

\noindent\textbf{[Step 1] The estimate \eqref{eq: z(t) vs tau(t)} holds: 
$$|z_c(t)-\tau(t)|< C_0\eps(1+t)\qd\mbox{for each}\q t\geq0$$ for some $C_0=C_0(\calM)>0$.}

     By observing that $$0=\int_{\bbR^3}x_3\xi_H(\bfx)\,\dd\bfx,
    $$ we have
    \begin{equation}\label{eq0306_1}
        \begin{aligned}
        |z_c(t)-\tau(t)|
        \,&=\,\f{1}{\nrm{\+{\xi_0}}_{L^1}}
    \left|\int_{\bbR^3}(x_3-\tau(t))\+{\xi(t)}\,\dd\bfx\right|\\
    &\lesssim\, \left|\int_{\bbR^3}(x_3-\tau(t))\+{\xi(t)}\,\dd\bfx\right|\\
    &=\,
    \left|\int_{\bbR^3}(x_3-\tau(t))\left[\+{\xi(t)}-\xi_H(\cdot-\tau(t)\bfe_z)\right]\,\dd\bfx\right|\\
    &\leq\, \left|\int_{\supp\+{\xi(t)}}\right|+\left|\int_{\supp\xi_H(\cdot-\tau(t)\bfe_z)\setminus\supp\+{\xi(t)}}\right|\\
    &=:\, (I)+(II).
    \end{aligned}
    \end{equation}
    We first estimate $(II)$ as  
    $$\begin{aligned}
        (II)\,&\leq\,\int_{\supp\xi_H(\cdot-\tau(t)\bfe_z)\backslash\supp\+{\xi(t)}}\underbrace{|x_3-\tau(t)|}_{\leq1}\cdot\left|\+{\xi(t)}-\xi_H(\cdot-\tau(t)\bfe_z)\right|\,\dd\bfx\\
        &\leq\,
    \nrm{\+{\xi(t)}-\xi_H(\cdot-\tau(t)\bfe_z)}_{L^1}
    < \eps.
    \end{aligned} $$
For $(I)$, we similarly put  
$$\begin{aligned}
    (I)\,&\leq\, \int_{\supp\+{\xi(t)}}|x_3-\tau(t)|\cdot\left|\+{\xi(t)}-\xi_H(\cdot-\tau(t)\bfe_z)\right|\,\dd\bfx\\
        &\leq\,
    \nrm{\+{\xi(t)}-\xi_H(\cdot-\tau(t)\bfe_z)}_{L^1}\cdot
    \left(\sup_{\bfx\in\supp\+{\xi(t)}}|x_3-\tau(t)|\right)\\
    &<\, \eps\cdot\calI(t)
\end{aligned}$$ 
where $$\calI(t)\q:=\sup_{\bfx\in\supp\+{\xi(t)}}|x_3-\tau(t)|$$
Note that the assumption \eqref{eq0319_3} implies $|\calI(0)|\leq2.$ 
Then we have 
    \begin{equation}\label{eq0306_4}
        |\calI(t)|\leq |\calI(t)-\calI(0)|+2
    \end{equation}
    where
    $$\begin{aligned}
        |\calI(t)-\calI(0)|\,&=\, \left|\,\left(\sup_{\bfx\in\supp\+{\xi_0}}|\phi(t,\bfx)\cdot\bfe_z-\tau(t)|\right)-\left(\sup_{\bfx\in\supp\+{\xi_0}}|x_3-\tau(0)|\right)\,\right|\\
        &\leq\, 
        \left(\sup_{\bfx\in\supp\+{\xi_0}}
        |\phi(t,\bfx)\cdot\bfe_z-x_3|\right)+|\tau(t)-\tau(0)|\\
        &\leq\, 
        \left(\sup_{\bfx\in\supp\+{\xi_0}}
        |\phi(t,\bfx)-\bfx|\right)+|\tau(t)-\tau(0)|
    \end{aligned}$$ where $\phi(t,\cdot)$ denotes the particle-trajectory map. Then, for any $\bfx\in\bbR^3,$ it holds that
    \begin{equation}\label{eq0306_2}
        |\phi(t,\bfx)-\bfx|\leq t\cdot\sup_{s\geq0}\nrm{\bfu(s)}_{L^\ift(\bbR^3)}\leq t\cdot C_1\nrm{\xi_0}_{L^\ift(\bbR^3)}^{1/2}
        \nrm{\xi_0}_{L^1(\bbR^3)}^{1/4}
        \nrm{r^2\xi_0}_{L^1(\bbR^3)}^{1/4}\lesssim \calM^{1/2}t
    \end{equation} for some uniform constant $C_1>0$ (Here, we used the estimate \eqref{eq: L^ift bound of velocity} in Remark~\ref{rmk: L^ift bound of velocity} to bound 
    $\sup_{s\geq0}\nrm{\bfu(s)}_{L^\ift(\bbR^3)}$, and the last inequality follows from the estimates  \eqref{eq0307_3}, \eqref{eq0307_4}).
    Therefore we get
    $$\begin{aligned}
        \calI(t)\lesssim
        1+\calM^{1/2}t+|\tau(t)-\tau(0)|
    \end{aligned}$$
    In sum, from \eqref{eq0306_1}, we arrive at    
        \begin{equation}\label{eq0306_3}
            |z_c(t)-\tau(t)|\lesssim
            (I)+(II)\leq
    \eps\calI(t)+\eps
\lesssim\eps\left(1+\calM^{1/2}t+|\tau(t)-\tau(0)|\right).
        \end{equation} It remains to estimate $|\tau(t)-\tau(0)|$ in the right-hand side. As we have $\xi_H(\cdot-\tau(t)\bfe_z)\simeq \+{\xi(t)}$, there exists $\bfy^t\in\bbR^3$ such that 
        $$\bfy^t\q\in\q \supp\xi_H\left(\cdot-\tau(t)\bfe_z\right)\cap \supp\+{\xi(t)}$$ which gives $|\tau(t)-y_3|\leq1.$ On the other hand, by the assumption \eqref{eq0319_3}, we observe that the starting point $\bfy^{0,t}\in\supp\+{\xi_0}$ of the point $\bfy^t\in\supp\+{\xi(t)}$ with the relation
        $\bfy^t=\phi(t,\bfy^{0,t})$
        satisfies $$|\tau(0)-y_3^{0,t}|\leq2.$$ Then we have 
        $$\begin{aligned}
            |\tau(t)-\tau(0)|\,&\leq\, |\tau(t)-y_3^t|+|\tau(0)-y_3^{0,t}|+|y_3^t-y_3^{0,t}|\\
            &\leq\, 1+2+|\phi(t,\bfy^{0,t})-\bfy^{0,t}|\\
            &\leq\, 3+t\cdot\sup_{s\geq0}\nrm{\bfu(s)}_{L^\ift(\bbR^3)},
        \end{aligned}
        $$ and by the same argument with \eqref{eq0306_2}, we get 
        $$|\tau(t)-\tau(0)|\lesssim 1+\calM^{1/2}t.$$
We plug the estimate above into \eqref{eq0306_3} and obtain the estimate \eqref{eq: z(t) vs tau(t)}: 
$$|z_c(t)-\tau(t)|\lesssim \eps \left(1+\calM^{1/2}t\right).$$

\noindent\textbf{[Step 2] It holds that}
\begin{equation}\label{eq_0214_1}
    \left|\f{d}{dt}z_c(t)-W_H\right|< C_2\left(\eps+\f{1}{\tau(t)-1}\right)\qd\mbox{if}\,
    \tau(t)\geq2
\end{equation} \textbf{for some uniform constant $C_2>0$.}

We observe that 
\begin{equation}\label{eq_0213_2}
    \begin{aligned}
    \f{d}{dt}z_c(t)\,&=\,\nrm{\+{\xi}}_{L^1}^{-1}
    \int_{x_3>0}x_3\rd_t\xi\,\dd\bfx\\
    &=\,\nrm{\+{\xi}}_{L^1}^{-1}
    \int_{x_3>0}\xi u^z\,\dd\bfx\\
    &=\,\nrm{\+{\xi}}_{L^1}^{-1}
    \int_{\bbR^3}
    \+{\xi}\left(\calK[\xi]\cdot\bfe_z\right)\,\dd\bfx\\
\end{aligned}
\end{equation} Here, we define a bilinear operator $\calB$ as
$$\calB[\xi_1,\xi_2]:= \int_{\bbR^3}\xi_1\left(\calK[\xi_2]\cdot\bfe_z\right)\,\dd\bfx\qd\mbox{for axisymmetric}\q \xi_1,\xi_2\in \left(L^1\cap L^\ift\cap L^1_w\right)(\bbR^3)$$ to obtain a simplified version of \eqref{eq_0213_2} as
\begin{equation}\label{eq0718_1}
    \f{d}{dt}z_c(t)= \nrm{\+{\xi}}_{L^1}^{-1}\cdot\calB\left[\+{\xi},\xi\right].
\end{equation}
 For a moment, we observe several properties of the operator $\calB$.\\

\noindent\textit{$(i)$ Bound of $\calB$:} We observe that 
$$|B[\xi_1,\xi_2]|\leq \nrm{\xi_1}_{L^2}\nrm{\calK[\xi_2]}_{L^2}$$ where $$\nrm{\calK[\xi_2]}_{L^2}^2=E[\xi_2]\lesssim \left(\nrm{r^2\xi_2}_{L^1}+\nrm{\xi_2}_{L^1\cap L^2}\right)\nrm{r^2\xi_2}_{L^1}^{1/2}\nrm{\xi_2}_{L^1}^{1/2}.$$ 
Here, we used the energy bound \eqref{eq: energy bound} in Lemma~\ref{lem: energy estimates} to obtain the last relation, and especially for the first equality, e.g., see \cite[Lemma 2.4]{Choi2024}.
Then we get the bound
\begin{equation}\label{eq_0217_1}
    |B[\xi_1,\xi_2]|\lesssim \nrm{\xi_1}_{L^2}^{\q}\left(\nrm{r^2\xi_2}_{L^1}+\nrm{\xi_2}_{L^1\cap L^2}\right)^{1/2}\nrm{r^2\xi_2}_{L^1}^{1/4}\nrm{\xi_2}_{L^1}^{1/4}.
\end{equation}

\noindent\textit{$(ii)$ Translation invariance:} By the relation 
$$\calK[\xi]\cdot\bfe_z=\f{1}{r}\rd_r\calG[\xi]$$ and the fact that $\rd_r G(r,r',z,z')$ is a function of triple of variables $(r,r',|z-z'|)$, the bilinear operator $\calB$ has the translation invariance in $z$-axis in that, by denoting the shift operator as $f^\tau:=f(\cdot-\tau\bfe_z)$ for $\tau\in\bbR$, we have
$$\calB[\xi_1,\xi_2]=\calB[\xi_1^\tau,\xi_2^\tau]\qd\mbox{for each}\q\tau\in\bbR.$$ 

\noindent\textit{$(iii)$ Traveling speed $W_H$ of Hill'
s vortex $\xi_H$:} As the solution $\Xi$ defined by 
$\Xi(t,\bfx):=\xi_H^{W_Ht}$ solves \eqref{eq: axi no swirl Euler}, we repeat the same process with \eqref{eq_0213_2} to get
\begin{equation}\label{eq0718_2}
    \begin{aligned}
    W_H=\f{d}{dt}\underbrace{\left(\f{1}{\nrm{\Xi}_{L^1}}\int_{\bbR^3}x_3\Xi\,\dd\bfx\right)}_{=W_Ht}\,=\,\nrm{\xi_H}_{L^1}^{-1}\cdot\calB\left[\Xi,\Xi\right]
    \,=\,\nrm{\xi_H}_{L^1}^{-1}\cdot\calB\left[\xi_H^{\tau(t)},\xi_H^{\tau(t)}\right].
\end{aligned}
\end{equation}

 Then, using \textit{(i)}-\textit{(iii)} above, we will obtain \eqref{eq_0214_1}. First, we combine \eqref{eq0718_1} and \eqref{eq0718_2} to observe that
$$\begin{aligned}
\left|\f{d}{dt}z_c(t)-W_H\right|\,&=\,
\left|\f{\calB\left[\+{\xi},\xi\right]}{\nrm{\+{\xi}}_{L^1}}-\f{\calB\left[\xi_H^{\tau(t)},\xi_H^{\tau(t)}\right]}{\nrm{\xi_H}_{L^1}}\right|\\
&\leq\,
\left|\f{\calB\left[\+{\xi},\xi\right]}{\nrm{\+{\xi}}_{L^1}}-\f{\calB\left[\xi_H^{\tau(t)},\xi_H^{\tau(t)}-\xi_H^{-\tau(t)}\right]}{\nrm{\xi_H}_{L^1}}\right|+\f{\calB\left[\xi_H^{\tau(t)},\xi_H^{-\tau(t)}\right]}{\nrm{\xi_H}_{L^1}}\\
&=:\, (III)\q+\q (IV).
\end{aligned}
$$ Our initial assumption \eqref{eq0718_3}, together with  \eqref{eq0307_3} and \eqref{eq0307_4}, gives 
$$\begin{aligned}
    (III)\,&\leq\, 
    \underbrace{\f{|\,\nrm{\+{\xi}}_{L^1}-\nrm{\xi_H}_{L^1}|}{\nrm{\+{\xi}}_{L^1}\nrm{\xi_H}_{L^1}}}_{\lesssim\eps}\cdot\left|\calB[\+{\xi},\xi]\right|+\underbrace{\f{1}{\nrm{\xi_H}_{L^1}}}_{\lesssim1}\cdot
    \left|\calB[\+{\xi},\xi]-\calB[\xi_H^{\tau(t)},\xi_H^{\tau(t)}-\xi_H^{-\tau(t)}]\right|\\
&\lesssim\, \eps\left|\calB[\+{\xi},\xi]\right|+
\left|\calB[\+{\xi}-
\xi_H^{\tau(t)},\,\xi]\right|+
\left|\calB[\xi_H^{\tau(t)},\,
\xi-(\xi_H^{\tau(t)}-\xi_H^{-\tau(t)})]\right|.
\end{aligned} 
 $$
 We now apply the initial assumption \eqref{eq0718_3} and the bound \eqref{eq_0217_1} to obtain that 
$$(III)\,\lesssim\,\eps.$$
 For $(IV)$, based on the observation that 
 $$|\rd_rG(r,z,r',z')|\lesssim 
 \f{r'}{[(r-r')^2+(z-z')^2]^{1/2}}+
 \f{rr'(r+r')}{[(r-r')^2+(z-z')^2]^{3/2}}
 \lesssim_{r,r'}\f{1}{|z-z'|}+\f{1}{|z-z'|^3}$$ which follows directly from the definition \eqref{eq: Green function formula}, we obtain that 
 $$\begin{aligned}
     |(IV)|\,&=\, \left|\f{1}{\nrm{\xi_H}_{L^1}}\int_{\bbR^3}\f{1}{r}\xi_H^{\tau(t)}\rd_r\calG[\xi_H^{-\tau(t)}]\,\dd\bfx\right|\\
     &\lesssim\,\f{1}{\nrm{\xi_H}_{L^1}}\int_{\Pi\times\Pi}
     \left|\rd_rG(r,z,r',z')\right|
     \xi_H^{\tau(t)}(r,z)\xi_H^{-\tau(t)}(r',z') r'\dd r'\dd z'\dd r\dd z\\
     &\lesssim\, \int_{\Pi\times\Pi}\left[\f{1}{|z-z'|}+\f{1}{|z-z'|^3}\right] \xi_H^{\tau(t)}(r,z)\xi_H^{-\tau(t)}(r',z')\dd r'\dd z'\dd r\dd z\\
     &=\, \int_{\Pi\times\Pi}\left[\f{1}{|z-z'+2\tau(t)|}+\f{1}{|z-z'+2\tau(t)|^3}\right] \xi_H(r,z)\xi_H(r',z')\dd r'\dd z'\dd r\dd z\\
     &\lesssim\, \f{1}{|\tau(t)-1|}+\f{1}{|\tau(t)-1|^3}
 \end{aligned}$$
  which leads to 
 $$\left|(IV)\right|\lesssim \f{1}{\tau(t)-1}$$ given that $\tau(t)\geq2.$ It proves \eqref{eq_0214_1}:
 $$\left|\f{d}{dt}z_c(t)-W_H\right|\,\leq\,
 (III)+(IV)\,\lesssim\,\eps+\f{1}{\tau(t)-1}\qd\mbox{given that}\q \tau(t)\geq2.$$
 \\

 \noindent\textbf{[Step 3] The estimate \eqref{eq: z(t) vs Wt} holds:}
$$|z_c(t)-z_c(0)-W_Ht|< C_3\eps t\qd\mbox{for each }t\geq0$$
\textbf{for some uniform constant $C_3>0$.}

We observe that \eqref{eq: z(t) vs tau(t)} implies 
\begin{equation}\label{eq_0217_3}
    \tau(t)\geq z_c(t)-C_0\eps(t+1)\qd\mbox{for }t\geq0
\end{equation} for some constant $C_0=C_0(\calM)>0$. Since \eqref{eq0307_1} and \eqref{eq0306_2} give
$$\begin{aligned}
    z_c(t)\,&\geq\,
z_c(0)-\sup_{s\geq0}\nrm{\bfu(s)}_{L^\ift}t\\
&>\, d-\eps-C_1t\cdot\nrm{\xi_0}_{L^\ift(\bbR^3)}^{1/2}
        \nrm{\xi_0}_{L^1(\bbR^3)}^{1/4}
        \nrm{r^2\xi_0}_{L^1(\bbR^3)}^{1/4}\\
        &\geq\, d-\eps-C_4t \calM^{1/2}t
\end{aligned} 
$$ for some uniform constant $C_4>0$, the inequality \eqref{eq_0217_3} leads to 
$$\tau(t)\geq d-t\cdot\left(C_4\calM^{1/2}+\eps C_0\right)-\eps(1+C_0).$$ We suppose that $\eps>0$ is small enough to satisfy $$\eps(1+C_0)\leq \f{1}{4}$$ and apply $ d>\eps^{-1}>1$ to get 
$$\tau(t)\geq \f{3}{4}d-t\cdot\left(C_4M^{1/2}+\eps C_0\right).$$
So there exists $T>0$ such that 
$$\tau(t)>\f{d}{2}\qd\mbox{for each}\q t\in[0,T].$$ For each $t\in[0,T]$, by assuming $\eps<1/4$, we get
$$\tau(t)>\f{d}{2}\geq\f{d_0(\eps)}{2}>\f{1}{2\eps}>2,$$ and we can use \eqref{eq_0214_1} to obtain
\begin{equation}\label{eq_0219_2}
    \left|\f{d}{dt}z_c(t)-W_H\right|<C_2\left(\eps+\f{4}{d}\right)\qd\mbox{for each}\q t\in[0,T]
\end{equation} and so 
\begin{equation}\label{eq_0217_5}
    |z_c(t)-z_c(0)-W_Ht|< C_2\left(\eps+\f{4}{d}
    \right)\cdot t\qd\mbox{for each}\q t\in[0,T].
\end{equation} 
It remains to show that the above inequality holds for all $t\geq0$, since the relation $d\geq d_0(\eps)>\eps^{-1}$ will be combined with the above relation to obtain \eqref{eq: z(t) vs Wt}. We assume the contrary, i.e., we suppose that \eqref{eq_0217_5}
 ceases to hold for the first time at $t=\tld{T}\geq T$ for some $\tld{T}<\ift$, i.e.,

$$\tld{T}:= \sup\left\{ T\geq0:\, 
|z_c(t)-z_c(0)-W_Ht|< C_2\left(\eps+\f{4}{d}
    \right)\cdot t
\qd\mbox{for each}\q t\in[0,T]\right\}<\ift.$$ Then the continuity of $z_c(\cdot)$ gives 
\begin{equation}\label{eq_0219_1}
    |z_c(t)-z_c(0)-W_Ht|< C_2\left(\eps+\f{4}{d}
    \right)\cdot t 
\qd\mbox{for each}\q t\in[0,\tld{T})
\end{equation}
and 
\begin{equation}\label{eq_0217_6}
    |z(\tld{T})-z_c(0)-W_H\tld{T}|= 
    C_2\left(\eps+\f{4}{d}
    \right)\cdot\tld{T}.
\end{equation}
Then \eqref{eq_0219_1} and \eqref{eq0307_1} lead to 
$$\begin{aligned}
    z_c(t)\,&>\, z_c(0)+\left(W_H-C_2\left(\eps+4/d\right)\right)t \\
    &\geq\, d-\eps+\left(W_H-5C_2\eps\right)t
    \qd\mbox{for each}\q t\in[0,\tld{T})
\end{aligned}
$$ due to $d\geq d_0(\eps)>\eps^{-1}$, and combining this result with \eqref{eq_0217_3} gives us 
$$\begin{aligned}
    \tau(t)\,&\geq\, 
    z_c(t)-C_0\eps(t+1)\\
    &\geq\, d-\eps+\left(W_H-5C_2\eps\right)t -C_0\eps(t+1) \\
    &=\, d-\underbrace{\eps(1+C_0)}_{(V)}+\underbrace{\left(W_H-(5C_2+C_0)\eps\right)}_{(VI)}t
    \qd\mbox{for each}\q t\in[0,\tld{T}).
\end{aligned}$$ 
Recall that we have assumed $\eps>0$ is small enough such that $(V)< \f{1}{4}d.$ So we get 
$$\tau(t)> \f{3}{4}d+(VI)\cdot t
\qd\mbox{for each}\q t\in[0,\tld{T}).$$
We further assume $\eps>0$ is small enough to get $(VI)>0$ (the smallness of $\eps>0$ depends only on $C_2>0$ and $C_0=C_0(\calM)>0$, and so it depends only on $\calM>0$). Then we get 
$$\tau(t)> \f{3}{4}d>2
\qd\mbox{and so}\qd
\tau(t)-1>\f{1}{2}d
\qd\mbox{for each}\q t\in[0,\tld{T})$$ ($\eps>0$ is already assumed to be small enough to guarantee $d\geq d_0(\eps)>\eps^{-1}>4$).
Applying it to \eqref{eq_0214_1} gives 
$$|z_c(\tld{T})-z_c(0)-W_H\tld{T}|< C_2\left(\eps+\f{2}{d}
    \right)\cdot \tld{T}$$
    after integration, which contradicts \eqref{eq_0217_6}. Hence, we get $\tld{T}=\ift,$ which means that the inequality 
    \eqref{eq_0217_5} holds for all $T>0$, i.e., we obtain \eqref{eq: z(t) vs Wt}:
    $$|z_c(t)-z_c(0)-W_Ht|< C_2\left(\eps+\f{4}{d}
    \right)\cdot t<
    5C_2\eps t\qd\mbox{for each}\q t\geq0$$
    due to $d>\eps^{-1}$. It completes the proof.
\end{proof}

\q \section*{Acknowledgments}

We deeply appreciate Prof. Kyudong Choi (UNIST) for introducing this problem.

\end{document}